\theoremstyle{plain} 
\numberwithin{equation}{section}
\newtheorem{theorem}{Theorem}[section]
\newtheorem{corollary}[theorem]{Corollary}
\newtheorem{lemma}[theorem]{Lemma}
\newtheorem{proposition}[theorem]{Proposition}
\theoremstyle{definition}
\newtheorem{remark}[theorem]{Remark}
\theoremstyle{definition}
\newtheorem{definition}[theorem]{Definition}
\newcommand{\supp}{\operatorname{supp}}
\newcommand{\dist}{\operatorname{dist}}
\newcommand{\good}{\operatorname{good}}
\newcommand{\I}{\operatorname{I}}
\newcommand{\C}{\mathbb{C}}
\newcommand{\abs}[1]{|#1|}
\newcommand{\Babs}[1]{\Big|#1\Big|}
\newcommand{\Norm}[2]{\|#1\|_{#2}}
\newcommand{\BNorm}[2]{\Big\|#1\Big\|_{#2}}
\newcommand{\pair}[2]{\langle #1,#2 \rangle}
\newcommand{\ave}[1]{\langle #1\rangle}
\newcommand{\Z}{\mathbb{Z}}
\newcommand{\R}{\mathbb{R}}
\begin{document}



\title[Dyadic representation of Calder\'on--Zygmund operators]{Dyadic representation and boundedness of non-homogeneous Calder\'on--Zygmund operators with mild kernel regularity}

\author{Ana Grau de la Herr\'an}
\author{Tuomas Hyt\"onen}

\address{Department of Mathematics and Statistics, P.O. Box 68, FI-00014 University of Helsinki, Finland}
\email{anagrauherran@gmail.com, tuomas.hytonen@helsinki.fi}

\thanks{Both authors were supported by the European Union through the ERC Starting Grant "Analytic-probabilistic methods for borderline singular integrals" and by the Finnish Centre of Excellence in Analysis and Dynamics Research.}

\begin{abstract}
We prove a new dyadic representation theorem with applications to the $T(1)$ and $A_2$ theorems. In particular, we obtain the non-homogeneous $T(1)$ theorem under weaker kernel regularity than the earlier approaches.
\end{abstract}

\maketitle

\tableofcontents

\section{Introduction}\label{s1}

Various results in the theory of singular integrals are known ``for all Calder\'on--Zygmund operators''. Examples that we have in mind include the $T(1)$ theorem of David--Journ\'e \cite{DJ}:
\begin{equation*}
  \Norm{T}{L^2\to L^2}\leq C\ \Leftrightarrow\ \Norm{T(1_Q)}{L^2}\leq c\abs{Q}^{1/2},\ \Norm{T^*(1_Q)}{L^2}\leq c\abs{Q}^{1/2}\ \forall Q;
\end{equation*}
its extension to non-homogeneous (non-doubling) measures by Nazarov--Treil--Volberg \cite{NTV};
and the $A_2$ theorem of the second author \cite{Hytonen:A2}:
\begin{equation*}
  \Norm{T}{L^2(w)\to L^2(w)}\leq c_T[w]_{A_2},\quad [w]_{A_2}:=\sup_Q\fint_Q w\,dx\fint_Q\frac{1}{w}dx.
\end{equation*}
However, when it comes to fine details of the definition of Calder\'on--Zygmund operators, it turns out that these theorems (seem to) require slightly different assumptions on the operator. We are particularly concerned about the minimal smoothness assumptions that one needs to impose on the kernel of the operator.

The most common definition of Calder\'on--Zygmund operators involves H\"older-continuous (in a suitable scale-invariant fashion, detailed below) kernels, with a power-type modulus of continuity $\omega(t)=t^\delta$ for $\delta\in(0,1]$, and it is in this form that both the $T(1)$ and the $A_2$ theorems first appeared. However, in many cases one can deal with more general continuity-moduli with a modified Dini-condition of the type
\begin{equation}\label{eq:logDini}
  \int_0^1\omega(t)\Big(1+\log\frac{1}{t}\Big)^\alpha\frac{dt}{t}<\infty.
\end{equation}
The usual Dini-condition corresponds to $\alpha=0$, and it is known to be enough for many classical results in the theory of singular integrals. It is only very recently that this was shown to be sufficient for the $A_2$ theorem as well, by Lacey~\cite{Lacey:elementary}, whereas the prior approaches needed $\alpha=1$ (this is explicit in \cite{HLP} and implicit in \cite{Lerner:dyadic,Lerner:simple}). The sharpest sufficient condition for the classical $T(1)$ theorem, to our knowledge, appears to be $\alpha=\frac12$, which is implicit in Figiel \cite{Figiel:1990} and explicit in Deng, Yan and Yang \cite{DYY}. However, the more recent extensions of the $T(1)$ theorem to non-homogeneous measures are only available under the H\"older-continuity assumption. (See \cite{NTV} for the full non-homogeneous $T(b)$ theorem, or \cite{Volberg:2013}, where the special case of the $T(1)$ theorem is recovered by methods that are closely related both to the $A_2$ theorem and the present paper.)

Given that the critical cancellation properties of Calder\'on--Zygmund operators are somewhat hidden in their usual definition, all results mentioned above depend crucially on (implicit or explicit) \emph{representation theorems} of Calder\'on--Zygmund operators as infinite superpositions of simpler (which often means: dyadic) model operators,
\begin{equation*}
  Tf=\sum_{i=1}^\infty \Phi_i(T,f).
\end{equation*}
In this way, the smoothness needed to bound an operator is linked with the smoothness required to obtain a convergent representation.

There are basically two kinds of representation theorems, linear and non-linear. In the linear case, each $\Phi_i$ is linear in both $T$ and $f$, or, stated otherwise, $\Phi_i(T,f)=\Psi_i(T)f$, where $\Psi_i$ is a linear transformation between suitable spaces of linear operators. Such a representation lies behind the usual proofs of the $T(1)$ theorem, as well as the original proof of the $A_2$ theorem \cite{Hytonen:A2}. In contrast to this, the more recent approaches to the $A_2$ theorem \cite{Lacey:elementary,Lerner:simple,Lerner:NYJ} (as well as some recent ramifications of the $T(1)$ theorem \cite{LaceyMart:Tb}) are based on decompositions, where $\Phi_i(T,f)$ depends non-linearly on both $T$ and $f$. Typically, such non-linear representations arise from some kind of stopping time arguments.

Although non-linear representations appear to yield stronger results, at least in questions around the $A_2$ theorem, there is still independent interest towards linear representations, which are better suited e.g., for iterative applications as in \cite{DalencOu}, or multi-parameter extensions, as in \cite{Martikainen:biparam,Ou:multiparam}. It is also of some theoretical interest whether the non-linear methods are fundamentally stronger, or whether the same results could also be recovered via linear representations.

In this paper, we prove a new (linear) dyadic representation formula with applications to both $T(1)$ and $A_2$ theorems. In the $T(1)$ direction, this leads to a non-homogeneous $T(1)$ theorem under the same mild kernel regularity assumptions that were so far only known in a homogeneous setting. As for $A_2$, while we are not able to recover the largest class of kernels amenable to non-linear methods, we come rather close to it, and much closer than any of the previously known linear arguments.

We now turn to a more detailed description of our results. Let $T$ be a Calder\'on-Zygmund operator of order $n$ on $\mathbb{R}^d$, with respect to a Borel measure $\mu$ of order $n$. That is, $T$ acts on a dense subspace of functions in $L^2(\mathbb{R}^d)$ (for the present purposes, this class should at least contain the indicators of cubes in $\mathbb{R}^d$) and has the kernel representation
\begin{equation*}
Tf(x)=\int_{\mathbb{R}^d}K(x,y)f(y)d\mu(y), \quad x\notin \supp f,
\end{equation*}
where $\mu$ satisfies the growth condition
\begin{equation*}
  \mu(B(x,r))\leq C\cdot r^n
\end{equation*}
for every $x\in\mathbb{R}^d$ and every $r>0$. Note that $\mu$ need not be a doubling measure.

Moreover, the kernel should satisfy the $n$th order standard estimates, which we assume in the following form:
\begin{equation}
|K(x,y)|\leq\frac{C}{|x-y|^n}
\end{equation}
and
\begin{equation}
|K(x,y)-K(x',y)|+|K(y,x)-K(y,x')|\leq \frac{C}{|x-y|^n}\omega\Big(\frac{|x-x'|}{|x-y|}\Big)
\end{equation}
whenever $|x-x'|\leq 1/2 |x-y|$. Here $\omega$ is a modulus of continuity: an increasing  and subadditive ($\omega(a+b)\leq\omega(a)+\omega(b)$) function with $\omega(0)=0$.

We also assume the ``$T(1)$'' conditions in the local form
\begin{equation}\label{eq:localT1}
  \begin{cases}
    \Vert T1_Q\Vert_{L^2(\mu)} &\leq C\mu(Q)^{1/2},\\
  \Vert T^*1_Q\Vert_{L^2(\mu)} &\leq C\mu(Q)^{1/2},\end{cases}
\end{equation}
for all cubes $Q\subset\R^d$, where we also regard $Q=\R^d$ as a cube in the case that $\mu(\R^d)<\infty$.

Our new representation theorem then takes the following form:

\begin{theorem}\label{thm:newRepr}
Under the above assumptions on $T$ and $\mu$, and suitable test functions $f,g\in L^2(\mu)$ (as detailed in Section~\ref{sec:finitary}), the operator $T$ admits a representation
\begin{equation*}
  \pair{Tf}{g}
  =\mathbb{E}\sum_{\substack{k\in\Z \\ k\neq 0}}\omega(2^{-\abs{k}})\Big(\pair{R_k f}{g}+\pair{Q_k f}{g}\Big)
    +\mathbb{E}\pair{\Pi_{T1}f}{g}+\mathbb{E}\pair{f}{\Pi_{T^*1}g},
\end{equation*}
where
\begin{enumerate}
 \item $\mathbb{E}$ is the expectation over a random choice of dyadic systems on $\R^d$,
 \item each $\Pi_b$, $b\in\{T1,T^*1\}$, is a dyadic paraproduct with
\begin{equation*}
  \Norm{\Pi_b}{L^2(\mu)\to L^2(\mu)}\leq C,
\end{equation*}
 \item each $R_k$ and $Q_k$ is a dyadic operator with
\begin{equation*}
  \Norm{R_k}{L^2(\mu)\to L^2(\mu)}\leq C,\qquad\Norm{Q_k}{L^2(\mu)\to L^2(\mu)}\leq C\sqrt{\abs{k}}.
\end{equation*}
\end{enumerate}
If $\mu$ is doubling, then each $R_k$ and $Q_k$ is a sum of $O(\abs{k})$ dyadic shifts of complexity $O(\abs{k})$.
\end{theorem}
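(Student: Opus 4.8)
The plan is to follow the now-standard route to dyadic representation theorems (as in the proofs of the homogeneous $A_2$ theorem and the non-homogeneous $T(1)$ theorem of Nazarov--Treil--Volberg), while keeping track of the modulus of continuity $\omega$ and of the non-doubling nature of $\mu$. \emph{Finitary reduction.} As set up in Section~\ref{sec:finitary}, one first reduces to test functions $f,g$ whose $\mu$-martingale expansions over any dyadic system are finite, so that every sum below is finite and the desired identity may be manipulated termwise. \emph{Randomisation and the good/bad split.} Since $\pair{Tf}{g}$ does not depend on the dyadic system, $\pair{Tf}{g}=\mathbb{E}\pair{Tf}{g}$, where $\mathbb{E}$ averages over a randomly shifted dyadic system $\mathcal{D}$. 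Expanding $f=\sum_Q\Delta_Q f$ and $g=\sum_R\Delta_R g$ into $\mu$-martingale differences over $\mathcal{D}$ gives $\pair{Tf}{g}=\mathbb{E}\sum_{Q,R}\pair{T\Delta_Q f}{\Delta_R g}$. For each pair one may assume the cube of smaller side length is \emph{good} in the sense of Nazarov--Treil--Volberg, i.e.\ far from the boundary of every sufficiently larger cube of $\mathcal{D}$: goodness of the smaller cube has a probability independent of the relevant data and can be divided out, while the \emph{bad} contributions are reabsorbed into the good ones using the symmetry of $\mathbb{E}$.

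\emph{Classification and the off-diagonal operators $R_k$.} Next I would sort the good pairs $(Q,R)$ by their relative position and scale; the index $k$ records the signed difference of generations between $Q$ and $R$ (equivalently, the complexity of the dyadic shift they generate), with $k\neq 0$ because the comparable-scale near-diagonal interactions are bounded on $L^2(\mu)$ directly and can be absorbed into the remaining terms. For pairs that are either well separated, or nested with the small cube far from the relevant sub-cube of the larger one, one uses the cancellation of the smaller martingale difference: subtracting a constant inside the kernel and invoking the smoothness estimate produces a gain of $\omega(2^{-|k|})$, while the growth bound $\mu(B(x,r))\le Cr^n$ supplies the decay needed to sum over positions. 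Grouping these pairs by complexity and normalising by $\omega(2^{-|k|})$ yields the operators $R_k$; the bound $\Norm{R_k}{L^2(\mu)\to L^2(\mu)}\le C$ is then a (by now routine) dyadic almost-orthogonality / Cotlar-type estimate in the non-homogeneous setting.

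\emph{Paraproducts and the operators $Q_k$.} The remaining nested pairs are those in which the \emph{larger} martingale difference must be estimated without using its cancellation. Adding and subtracting the dyadic paraproducts with symbols $T1$ and $T^*1$, the principal part becomes $\mathbb{E}\pair{\Pi_{T1}f}{g}+\mathbb{E}\pair{f}{\Pi_{T^*1}g}$; these are bounded on $L^2(\mu)$ because \eqref{eq:localT1} forces $T1$ and $T^*1$ to satisfy the dyadic Carleson ($\mathrm{BMO}_\mu$) condition that makes the corresponding paraproduct bounded. The remainders, again grouped by complexity $k$ and normalised by $\omega(2^{-|k|})$, define the operators $Q_k$; here the bound $\Norm{Q_k}{L^2(\mu)\to L^2(\mu)}\le C\sqrt{|k|}$ comes from writing $Q_k$ as a sum over the $|k|$ intermediate scales between the two cubes of a pair, of pieces that are nearly orthogonal, so that the $\ell^2$-summation loses only $\sqrt{|k|}$ rather than the full $|k|$ of the crude estimate. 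I expect this to be the main obstacle: arranging the paraproduct remainder so that it genuinely splits into $|k|$ almost-orthogonal layers, while at the same time controlling the non-homogeneous Haar functions, which are not $L^\infty$-normalised, throughout the computation. With these bounds in hand, the resulting series converges exactly when $\sum_{k\neq 0}\omega(2^{-|k|})\sqrt{|k|}<\infty$, i.e.\ when $\int_0^1\omega(t)(1+\log\frac1t)^{1/2}\,\frac{dt}{t}<\infty$, which is the regularity used in the $T(1)$ application.

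\emph{The doubling case.} Finally, if $\mu$ is doubling then each $\Delta_Q$ is spanned by a bounded (dimension-dependent) number of $L^\infty$-normalised Haar functions supported on $Q$, and the construction above exhibits $R_k$ and $Q_k$ literally as classical dyadic shift operators. Splitting the sum over the $O(|k|)$ admissible relative positions and scales that occur at complexity $k$ then shows that each of $R_k$ and $Q_k$ is a sum of $O(|k|)$ dyadic shifts of complexity $O(|k|)$, which is the final assertion.
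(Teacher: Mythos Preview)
Your outline follows the standard Nazarov--Treil--Volberg / Hyt\"onen representation strategy, in which $k$ is the signed difference of generations between the two cubes of a pair, and the operators of complexity $k$ arise from separated or deeply nested pairs at that vertical distance. That is \emph{not} the decomposition used here, and the mismatch is not cosmetic: the whole point of the theorem is to obtain the clean factor $\omega(2^{-|k|})$ together with the bounds $\|R_k\|\le C$ and $\|Q_k\|\le C\sqrt{|k|}$, and these come from a different, Figiel-type organisation.

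Concretely, in the paper the vertical parameter is \emph{fixed}: one chooses an integer $r\ge 2$, imposes $r$-goodness on the smaller cube, and only ever compares cubes whose generations differ by at most $r$. The case $|i-j|>r$ is collapsed immediately by summing the larger-side martingale differences into an averaging operator, which produces the paraproduct with symbol $T^*1$ (or $T1$) together with a remainder of the form $\langle TD_H^{(r,\good)}f,1_{H\dot+m}\rangle(\langle g\rangle_{H\dot+m}-\langle g\rangle_H)$; the case $|i-j|\le r$ gives terms $\langle TD_H^{(r,\good)}f,P_{H\dot+m}^{(r+1)}g\rangle$. In both cases the new index is the \emph{horizontal translation} $m\in\Z^d$ between cubes $H$ and $H\dot+m$ of the \emph{same} size, and $k$ is defined by $|m|\sim 2^{k-2}$. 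A \emph{second} randomisation step then inserts $k$-goodness of $H$ (independent of its $r$-good descendants), which guarantees $H\dot+m\subset H^{(k)}=:K$ and makes $K$ the common ancestor needed for the shift structure. The kernel smoothness, applied across the gap $\dist(I,H\dot+m)\gtrsim\ell(K)$, yields exactly $\omega(2^{-k})$. The $\sqrt{k}$ in $\|Q_k\|$ then comes from $A_K^{(k)}=P_K^{(k+1)}A_K^{(k)}D_K^{(k+r)}$: the $D_K^{(k+r)}$ on the $f$-side gives full orthogonality, while $P_K^{(k+1)}$ on the $g$-side overlaps across $k+1$ scales, so Cauchy--Schwarz loses $\sqrt{k+1}$.

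Your sketch misses both the horizontal-translation viewpoint and the second layer of goodness, and your characterisations of $R_k$ (``separated or nested-far pairs'') and $Q_k$ (``paraproduct remainder split into $|k|$ intermediate scales'') do not match the actual operators. Without the Figiel-type reorganisation, the standard approach does not produce the factor $\omega(2^{-|k|})$ in this clean form (one typically gets $\omega(2^{-|k|/2})$-type decay from the usual goodness threshold), so the proposal as written would not prove the theorem.
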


Both dyadic paraproducts and dyadic shifts mentioned in the theorem have the ``usual definition'', which we shall recall below.
As indicated, the dyadic operators $R_k$ and $Q_k$ of the new representation theorem are not precisely dyadic shifts in the sense of the usual definition, although they are closely related to them. But, in a sense, these operators have a fundamental nature, and their norm bounds stated above are more efficient than what would follow from their decomposition into usual shifts and an application of known estimates.

The operators $R_k$ and $Q_k$ are related to, and inspired by, certain operators denoted by $T_m$ and $U_m$ and introduced by Figiel \cite{Figiel:1988,Figiel:1990}. These are, in fact, the first ``dyadic shifts'' in the literature, although somewhat different from the modern usage of the term.

Our main application of Theorem~\ref{thm:newRepr} is the $T(1)$ theorem, which shows that the same mild kernel regularity as in the case of the Lebesgue measure \cite{DYY} is also admissible for the general non-homogeneous (i.e., not necessarily doubling) measures $\mu$ considered above.

\begin{corollary}\label{cor:T1}
Under the above assumptions on $T$ and $\mu$, if the modulus of continuity satisfies the Dini condition \eqref{eq:logDini} with $\alpha=\frac12$, then $T$ acts boundedly on $L^2(\mu)$.
\end{corollary}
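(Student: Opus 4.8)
The plan is to deduce Corollary~\ref{cor:T1} directly from the representation in Theorem~\ref{thm:newRepr} by summing the series of operator norms. First I would invoke Theorem~\ref{thm:newRepr} to write, for suitable test functions $f,g$,
\begin{equation*}
  \abs{\pair{Tf}{g}}\leq\mathbb{E}\sum_{k\neq 0}\omega(2^{-\abs{k}})\Big(\abs{\pair{R_k f}{g}}+\abs{\pair{Q_k f}{g}}\Big)+\mathbb{E}\abs{\pair{\Pi_{T1}f}{g}}+\mathbb{E}\abs{\pair{f}{\Pi_{T^*1}g}}.
\end{equation*}
The two paraproduct terms are controlled by $C\Norm{f}{L^2(\mu)}\Norm{g}{L^2(\mu)}$ using part~(2) of the theorem, uniformly over the dyadic systems, so they survive the expectation without trouble. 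For the main sum, I would apply part~(3): $\Norm{R_k}{L^2(\mu)\to L^2(\mu)}\leq C$ and $\Norm{Q_k}{L^2(\mu)\to L^2(\mu)}\leq C\sqrt{\abs{k}}$, again uniformly in the dyadic system, so that
\begin{equation*}
  \abs{\pair{Tf}{g}}\leq C\Big(1+\sum_{k\neq 0}\omega(2^{-\abs{k}})(1+\sqrt{\abs{k}})\Big)\Norm{f}{L^2(\mu)}\Norm{g}{L^2(\mu)}.
\end{equation*}

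The crux is then to show that the numerical series $\sum_{k\geq 1}\omega(2^{-k})\sqrt{k}$ converges precisely under the hypothesis \eqref{eq:logDini} with $\alpha=\tfrac12$. This is the step where the value $\alpha=\tfrac12$ enters, and it is essentially a comparison between a dyadic sum and an integral. I would discretize \eqref{eq:logDini}: on the interval $t\in[2^{-k-1},2^{-k}]$ one has $1+\log\tfrac1t\approx k$ and $\tfrac{dt}{t}\approx 1$, while $\omega(t)\geq\omega(2^{-k-1})$ by monotonicity, so
\begin{equation*}
  \int_0^1\omega(t)\Big(1+\log\tfrac1t\Big)^{1/2}\frac{dt}{t}\gtrsim\sum_{k\geq 1}\omega(2^{-k-1})k^{1/2}\gtrsim\sum_{k\geq 1}\omega(2^{-k})\sqrt{k}
\end{equation*}
(the last comparison using subadditivity, $\omega(2^{-k})\leq 2\omega(2^{-k-1})$, to shift the index). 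Hence the series is finite, and $\Norm{T}{L^2(\mu)\to L^2(\mu)}\leq C$ on the dense class of test functions.

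Finally I would remark that this a priori bound on the dense subspace of test functions detailed in Section~\ref{sec:finitary} upgrades to genuine $L^2(\mu)$-boundedness of $T$ by a routine density/extension argument, which is standard and which the paper presumably addresses when setting up the finitary reductions. The only genuine obstacle is the discretization inequality above, and even that is elementary; the real work has already been absorbed into Theorem~\ref{thm:newRepr}, whose whole point is to package the operator so that the kernel smoothness enters only through the scalar weights $\omega(2^{-\abs{k}})$ multiplied against operators of norm $O(\sqrt{\abs{k}})$. One should double-check that the constant $C$ in the final estimate is independent of the dyadic system so that the expectation $\mathbb{E}$ is harmless — but this is guaranteed by the uniform bounds in parts~(2) and~(3) of the theorem.
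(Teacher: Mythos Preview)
Your proposal is correct and follows essentially the same approach as the paper: apply the norm bounds from Theorem~\ref{thm:newRepr} to the representation, reduce to the convergence of $\sum_{k\geq 1}\omega(2^{-k})\sqrt{k}$, and compare this dyadic sum with the integral in \eqref{eq:logDini} for $\alpha=\tfrac12$. You actually give more detail on the sum--integral comparison (using monotonicity and subadditivity of $\omega$) than the paper, which dismisses it as ``an easy comparison of sums and integrals''; your remarks on uniformity in the dyadic system and the density extension are also reasonable additions that the paper leaves implicit.
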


\begin{proof}[Proof assuming Theorem~\ref{thm:newRepr}]
Using the decomposition and norm estimates provided by Theorem~\ref{thm:newRepr}, it follows that
\begin{equation*}
\begin{split}
  \abs{\pair{Tf}{g}}
  &\leq\sum_{\substack{k\in\Z \\ k\neq 0}}\omega(2^{-\abs{k}})C\Big(\Norm{f}{2}\Norm{g}{2}+\sqrt{\abs{k}}\Norm{f}{2}\Norm{g}{2}\Big)
    +C\Norm{f}{2}\Norm{g}{2} \\
   &\leq C\Norm{f}{2}\Norm{g}{2}\Big(1+\sum_{k=1}^\infty\omega(2^{-k})\sqrt{k}\Big) \\
   &\leq C\Norm{f}{2}\Norm{g}{2}\Big(1+\int_0^1\omega(t)\sqrt{\log\frac{1}{t}}\frac{dt}{t}\Big)
   \leq C\Norm{f}{2}\Norm{g}{2},
\end{split}
\end{equation*}
by an easy comparison of sums and integrals.
\end{proof}

Towards the $A_2$ theorem, we obtain:

\begin{corollary}\label{cor:A2}
Let $n=d$ and $\mu$ be the Lebesgue measure on $\R^d$.
Under the above assumptions on $T$, if the modulus of continuity satisfies the Dini condition \eqref{eq:logDini} with $\alpha=2$, then $T$ satisfies the $A_2$ inequality
\begin{equation*}
  \Norm{Tf}{L^2(w)}\leq C[w]_{A_2}\Norm{f}{L^2(w)}.
\end{equation*}
\end{corollary}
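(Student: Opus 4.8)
The plan is to run the argument of Corollary~\ref{cor:T1} essentially verbatim, only replacing the unweighted operator norms of the dyadic model operators by their sharp weighted counterparts, and then to track the resulting dependence on $[w]_{A_2}$. Since $n=d$ and $\mu$ is the Lebesgue measure, $\mu$ is doubling, so the last assertion of Theorem~\ref{thm:newRepr} is available: for each $k\neq 0$ both $R_k$ and $Q_k$ are sums of $O(\abs{k})$ dyadic shifts, each of complexity $O(\abs{k})$; moreover $\Pi_{T1},\Pi_{T^*1}$ are genuine dyadic paraproducts with $\Norm{\Pi_b}{L^2\to L^2}\leq C$, which (by the standard identification) encodes $\Norm{b}{\mathrm{BMO}}\leq C$ for $b\in\{T1,T^*1\}$; the latter bound also follows directly from the testing conditions \eqref{eq:localT1}.

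First I would recall the two weighted building blocks, both uniform over the random choice of dyadic system: (i) a dyadic shift $S$ of complexity $m$ obeys $\Norm{S}{L^2(w)\to L^2(w)}\leq C(1+m)[w]_{A_2}$, the linear-in-complexity, linear-in-$[w]_{A_2}$ bound that is the dyadic heart of the $A_2$ theorem \cite{Hytonen:A2}; and (ii) a dyadic paraproduct $\Pi_b$ obeys $\Norm{\Pi_b}{L^2(w)\to L^2(w)}\leq C\Norm{b}{\mathrm{BMO}}[w]_{A_2}$. Combining (i) with the shift decomposition of $R_k,Q_k$ and the triangle inequality gives, for every $k\neq 0$,
\begin{equation*}
  \Norm{R_k}{L^2(w)\to L^2(w)}+\Norm{Q_k}{L^2(w)\to L^2(w)}\leq C\abs{k}^2[w]_{A_2},
\end{equation*}
the two powers of $\abs{k}$ arising respectively from the number $O(\abs{k})$ of shifts and from the factor $1+O(\abs{k})$ contributed by the complexity of each shift in (i). Likewise (ii) together with $\Norm{T1}{\mathrm{BMO}},\Norm{T^*1}{\mathrm{BMO}}\leq C$ bounds each paraproduct term by $C[w]_{A_2}\Norm{f}{L^2(w)}\Norm{g}{L^2(w^{-1})}$, where for the adjoint paraproduct one uses $[w^{-1}]_{A_2}=[w]_{A_2}$.

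Then I would insert these estimates into the representation of Theorem~\ref{thm:newRepr} and take the expectation: dualizing $\Norm{Tf}{L^2(w)}$ against $g\in L^2(w^{-1})$ in the unweighted pairing, Cauchy--Schwarz with the splitting $w^{1/2}\cdot w^{-1/2}$ yields
\begin{equation*}
  \abs{\pair{Tf}{g}}\leq C[w]_{A_2}\Big(1+\sum_{k=1}^\infty\omega(2^{-k})\,k^2\Big)\Norm{f}{L^2(w)}\Norm{g}{L^2(w^{-1})}.
\end{equation*}
By the elementary comparison of the series with $\int_0^1\omega(t)\big(1+\log\tfrac1t\big)^2\tfrac{dt}{t}$, the sum is finite precisely because $\omega$ satisfies \eqref{eq:logDini} with $\alpha=2$; this is exactly where the exponent $\alpha=2$ enters. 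Taking the supremum over $g$ and passing from the dense class of test functions to all of $L^2(w)$ then gives $\Norm{Tf}{L^2(w)}\leq C[w]_{A_2}\Norm{f}{L^2(w)}$.

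Given Theorem~\ref{thm:newRepr} and the cited weighted estimates (i)--(ii), I do not expect a genuine obstacle: the proof is the bookkeeping of Corollary~\ref{cor:T1} with each $\sqrt{\abs{k}}$ upgraded to $\abs{k}^2$. The points that need a little care are checking that the weighted shift bound (i) is truly linear in the complexity and applies uniformly to every shift produced by Theorem~\ref{thm:newRepr} (so that no term carries a weighted norm worse than $C\abs{k}^2[w]_{A_2}$ — in particular for the $Q_k$, whose sharper unweighted bound $C\sqrt{\abs{k}}$ is irrelevant here), and noting that pushing $\alpha=2$ down toward the value $\alpha=0$ attainable by non-linear methods would require a representation with asymptotically fewer, or lower-complexity, shifts, which the present linear scheme does not supply.
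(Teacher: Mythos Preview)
Your proposal is correct and follows essentially the same approach as the paper: combine the shift structure from Theorem~\ref{thm:newRepr} with the linear-in-complexity weighted bound for dyadic shifts and the linear weighted bound for paraproducts to obtain $\Norm{R_k}{L^2(w)\to L^2(w)}+\Norm{Q_k}{L^2(w)\to L^2(w)}\leq C\abs{k}^2[w]_{A_2}$, then sum against $\omega(2^{-\abs{k}})$ and compare with the integral in \eqref{eq:logDini} for $\alpha=2$. The only cosmetic difference is that the paper cites \cite{HLMORSU,Treil:2011} and \cite{Beznosova} for the weighted building blocks and writes the conclusion directly as an operator-norm bound rather than via explicit dualization against $g\in L^2(w^{-1})$.
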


While this does not quite recover the form of the $A_2$ theorem obtained by Lacey \cite{Lacey:elementary} with $\alpha=0$ (see also the simplification by Lerner \cite{Lerner:NYJ}), or even the earlier results \cite{HLP,Lerner:dyadic,Lerner:simple} using non-linear representations with $\alpha=1$, this extends the class of Calder\'on--Zygmund operators that one can handle by any of the linear representation theorems currently known.

\begin{proof}[Proof assuming Theorem~\ref{thm:newRepr}]
Since dyadic shifts of complexity $O(\abs{k})$ are bounded on $L^2(w)$ with norm $O(\abs{k}\cdot[w]_{A_2})$ (see \cite[Theorem 2.10]{HLMORSU} or \cite[Theorem 4.1]{Treil:2011}), it follows that
\begin{equation*}
  \Norm{R_k}{L^2(w)\to L^2(w)}+\Norm{Q_k}{L^2(w)\to L^2(w)}\leq Ck^2[w]_{A_2}.
\end{equation*}
Recalling that dyadic paraproducts are also bounded on $L^2(w)$ with norm $O([w]_{A_2})$ (see \cite{Beznosova}),
substituting all this into the representation formula of Theorem~\ref{thm:newRepr}, we obtain
\begin{equation*}
\begin{split}
  \Norm{T}{L^2(w)\to L^2(w)}
  &\leq C\Big(1+\sum_{k=1}^\infty\omega(2^{-k})k^2\Big)[w]_{A_2} \\
  &\leq C\Big(1+\int_0^1\omega(t)\big(\log\frac{1}{t}\big)^2\frac{dt}{t}\Big)[w]_{A_2}\leq C[w]_{A_2}.\qedhere
\end{split}
\end{equation*}
\end{proof}

We actually suspect that a direct weighted analysis of the new operators $R_k$ and $Q_k$ (instead of their reduction to known results about dyadic shifts), could lead to the better weighted bounds
\begin{equation*}
    \Norm{R_k}{L^2(w)\to L^2(w)}+\Norm{Q_k}{L^2(w)\to L^2(w)}\overset{?}{\leq} C\abs{k}[w]_{A_2},
\end{equation*}
and thus to a linear proof of the $A_2$ theorem under the Dini condition \eqref{eq:logDini} with $\alpha=1$. However, since this would still be weaker than Lacey's result \cite{Lacey:elementary} with $\alpha=0$, we have not pushed hard on this point.

For future investigations we point out that our non-homogeneous results (for power bounded measures on $\R^d$) should extend to the case of  upper doubling measures on geometrically doubling metric spaces (as in \cite{HM:nonhomogTb}) with essentially notational complications only. Somewhat less obvious may be the extension to representations appropriate for $T(b)$ (rather than just $T(1)$) theorems. Multi-linear and multi-parameter extensions should also be possible.

\subsection{Plan of the paper}
 
 We recall some preliminaries and notation in Section~\ref{s2}. Once the notation is available, we provide a more detailed statement of the main Theorem~\ref{thm:newRepr} in Section~\ref{sec:details}, including a precise formula for the various operators appearing in the decomposition. The proof of the theorem is then divided into the subsequent sections. We split the main part of the proof into \emph{identities} and \emph{estimates}, namely, writing $T$ as a sum of the dyadic pieces (Section \ref{s3}), and showing that these pieces satisfy the relevant norm bounds (Section \ref{sec:estimates}). The proof of Theorem~\ref{thm:newRepr} is completed in Section \ref{sec:shifts}, where we establish the asserted shift structure of the new operators in the homogeneous situation. The last Section \ref{sec:weak} provides additional information about the weak $(1,1)$ behaviour of the new operators, again in the homogeneous situation only.


\section{Preliminaries}\label{s2}

Starting from a fixed reference system of dyadic cubes $\mathscr{D}^0$, we shall consider new dyadic systems, obtained by translating the reference system as follows. Let $\sigma=(\sigma_j)_{j\in\mathbb{Z}}\in(\{0,1\}^d)^{\mathbb{Z}}$ and
\begin{equation*}
 I\dot{+}\sigma:=I+\sum_{j:2^{-j}<\ell(I)}2^{-j}\sigma_j.
\end{equation*}
Then $$\mathcal{D}^{\sigma}:=\{I\dot{+}\sigma: I\in\mathcal{D}^0\},$$
and it is straightforward to check that $\mathcal{D}^{\sigma}$ inherits the important nestedness property of $\mathcal{D}^0$: if $I, J\in\mathcal{D}^{\sigma}$, then $I\cap J\in\{I,J,\emptyset\}.$ When the particular $\sigma$ is unimportant, the notation $\mathcal{D}$ is sometimes used for a generic dyadic system.

The reference system could be, but need not be, the standard system
\begin{equation*}
  \mathscr{D}^0=\{2^{-k}([0,1)^d+m):k\in\Z,m\in\Z^d\};
\end{equation*}
we could equally well start from any other fixed reference system.

Within any fixed system of dyadic cubes $\mathscr{D}$, we use the following notation:
\begin{itemize}
  \item $I^{(r)}$ is the $r$th ancestor $I$, i.e, $I^{(r)}\supseteq I$ and $\ell(I^{(r)})=2^r\ell(I)$.
  \item $\operatorname{ch}(I)$ is the set of children of $I$, i.e., $J\in \operatorname{ch}(I)$ if and only if  $J^{(1)}=I$.
  \item $\mathcal{D}_k$ refers to the set of cubes $Q\in\mathcal{D}$ such that $\ell(Q)=2^{-k}$.
\end{itemize}
Moreover, we write:
\[\begin{array}{ l l}
\langle f\rangle_I=\tfrac{1}{\mu(I)}\int_If(x)d\mu(x),& \langle f,g\rangle=\int_{\mathbb{R}^d}f(x)g(x)d\mu(x)\\
E_If=1_I\langle f\rangle_I , &E_kf=\sum_{I\in\mathcal{D}_k}E_If\\
D_If=\sum_{I'\in \operatorname{ch}(I)}E_{I'}f-E_If,&  D_kf=\sum_{I\in\mathcal{D}_k}D_If.
\end{array}\]



\begin{definition}
For every $I\in\mathscr{D}$, we define the \textbf{Haar functions} as a collection of functions $\{\varphi_I^i\}_{i=1}^{2^d-1}$ such that
\begin{enumerate}
  \item $\supp\varphi_I^i\subseteq I$,
  \item $\varphi_I^i$ is constant on each $I'\in\operatorname{ch}(I)$,
  \item  $\int\varphi_I^nd\mu=0$,
  \item\label{it:HaarBound} $\Vert\varphi_I^i\Vert_{\infty}\cdot\Vert\varphi_I^i\Vert_1\leq C$ (a constant independent of $I$ and $i$),
  \item $\Vert\varphi_I^i\Vert_2\in\{0,1\}$, and
  \item $D_If=\sum_{n=1}^{2^d-1}\langle f,\varphi_I^i\rangle\varphi_I^i$.
\end{enumerate}
\end{definition}

The proof of existence of such functions can be found in \cite[Sec.~4]{Hytonen:IMRN}. If $\mu$ is doubling, the construction is well known, and in this case \eqref{it:HaarBound} can be improved to $\Norm{\varphi_I^i}{\infty}\leq C\mu(I)^{-1/2}$.

\begin{definition}


Let $k\in\mathbb{N}$ and $k\geq 2$, we say that a cube $I\in\mathcal{D}^{\sigma}$ is \textbf{$k$-good} if $\operatorname{dist}(I,\partial I^{(k)})\geq \tfrac{1}{4}\ell(I^{(k)})$.
\\

The probability of a particular cube $I\dot{+}\sigma$ being $k$-good is equal for all cubes $I\in\mathcal{D}^0$ and for all $k$ so we denote

\begin{equation*}
  \pi_{\textup{good}}:=\mathbb{P}_\sigma(I\dot{+}\sigma \ k\textup{-good})=2^{-d}.
\end{equation*}
Despite the easy numerical value of this expression, we use the notation $\pi_{\good}$ to stress the origin of this factor from goodness considerations in the relevant expressions.
\end{definition}

\begin{remark}\label{independence}
Let $Q$ be a cube, and $r,k\in\mathbb{N}$, $r,k\geq 2$, the position and $k$-goodness of $Q\dot{+}\sigma$ are independent random variables. In addition, the $r$-goodness of $Q\dot+\sigma$ is independent of the $k$-goodness of $(Q\dot+\sigma)^{(r)}$.
\end{remark}

\begin{figure}
\centering
\includegraphics[width=0.7\textwidth]{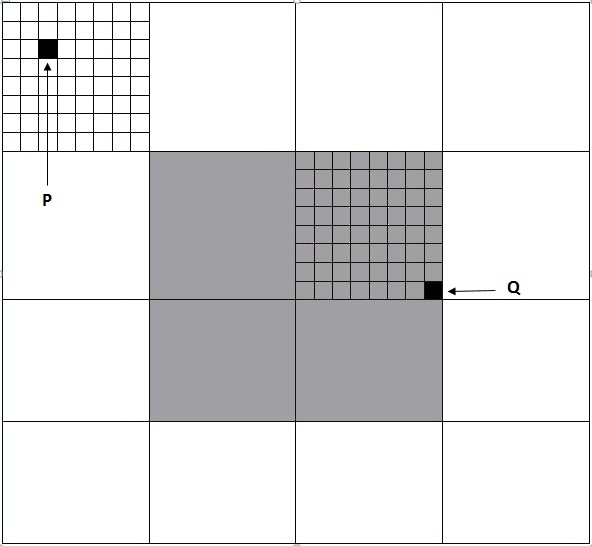}
\caption{$k$-goodness with $k=5$. The small cubes in the picture (which have $2^5$ times smaller side-length than the big cube) are $5$-good whenever they belong to the grey region (like $Q$) and $5$-bad whenever they belong to the rest of the big cube (like $P$). }
\end{figure}

%
%

The position of the translated interval by definition, depends only on $\sigma_j$ for $2^{-j}<\ell(R)$. On the other hand the $k$-goodness depends on its relative position with respect of $R^{(k)}$. Since the same translation components  $\sigma_j$ for $2^{-j}<\ell(R)$ appear in both $R\dot{+}\sigma$ and $(R\dot{+}\sigma)^{(k)}$ so that the $k$-goodness depends only on $\sigma_j$ for $\ell(R)\leq 2^{-j}<\ell(R^{(k)})$ which makes the position and $k$-goodness independent random variables.

\begin{definition}
Let $f\in L^1_{loc}(\mathbb{R}^d)$, $r\geq 2$ and $H\in\mathcal{D}$, then we define
\begin{equation}
D_H^{(r)}f:=\sum_{I:I^{(r)}=H}D_If \quad , \quad P_H^{(r)}f:=\sum_{j=0}^{r-1}D_H^{(j)}f
\end{equation}

and

\begin{equation}
D_H^{(r, good)}f:=\sum_{\substack {I:I^{(r)}=H\\ I \ r\textup{-good}}}D_If , \qquad P_H^{(r,good)}f:=\sum_{j=2}^{r-1}D_H^{(j,good)}f.
\end{equation}
\end{definition}

Due to orthogonality, it is a standard computation to verify that, for any $k\geq 1$,
\begin{equation*}
\Big(\sum_{K}\Vert D_K^{(k)}f\Vert_2^2\Big)^{1/2}\leq \Vert f\Vert_2.
\end{equation*}
%
%
%


\subsection{A finitary set-up}\label{sec:finitary}

The core of the proof of Theorem~\ref{thm:newRepr} is based on somewhat elaborate manipulations of the random martingale difference expansions of $f$ and $g$ in the duality pairing $\pair{Tf}{g}$. In order to minimise the need of tiresome justifications of the rearrangements of sums and expectations, we choose the following set-up, similar to \cite{Hytonen:2wH} or  \cite{Volberg:2013}, to carry out these manipulations:

Let $f,g\in L^2(\mu)$ be both supported on some big cube $K_1$, and constant on the dyadic subcubes of $K_1$ of side-length $2^{-N}\ell(K_1)$. Clearly, such functions are dense in $L^2(\mu)$ when both $K_1$ and $N$ are allowed to vary. Let $K_0$ be the cube with $\ell(K_0)=2\ell(K_1)$, positioned so that $K_1$ is the ``upper right'' quadrant of $K_0$.

Let us then take as the reference dyadic system some $\mathscr{D}^0$ that contains $K_0$. (This determines the choice of dyadic cubes smaller than $K_0$ uniquely, but we can make an arbitrary choice of the dyadic ancestors of $K_0$ in the system $\mathscr{D}^0$.) And then we consider a modified version of the shifted systems, with
\begin{equation*}
  K\dot+\sigma:=K+\sum_{j: 2^{-N}\ell(K_1)\leq 2^{-j}<\min(\ell(K),\ell(K_1))}2^{-j}\sigma_j.
\end{equation*}
That is, we only randomise on length scales larger than the minimal length scale $2^{-N}\ell(K_1)$, where the functions $f$ and $g$ are constant. Thus $\mathscr{D}^\sigma$ and $\mathscr{D}^0$ have the same small dyadic cubes, and since $f$ and $g$ are constant on these cubes, the corresponding martingale differences vanish.

Similarly, we only randomise on length scales smaller than the support of the functions. In particular, no matter the value of $\sigma$, we find that $K_0\dot+\sigma\supset K_1$ contains the supports of $f$ and $g$. Thus, irrespective of the value of $\sigma$, we have
\begin{equation*}
  f=E_{K_0\dot+\sigma}f+\sum_{\substack{K\in\mathscr{D}^\sigma:\,K\subseteq K_0\dot+\sigma \\ \ell(K)>2^{-n}\ell(K_1)}}D_K f,
\end{equation*}
with a similar expansion for $g$. Now, there is only a fixed finite number of terms in this sum, independently of $\sigma$. And, in effect, there are only finitely many relevant values of $\sigma$, since only the $N$ coordinates $\sigma_j$ with $2^{-N}\ell(K_1)\leq 2^{-j}<\ell(K_1)$ have an impact on the definition of $K\dot+\sigma$. Thus, all our sums and expectations range over fixed finite ranges only, so that the rearrangement of the summation order is never an issue. Since the terms $E_{K_0\dot+\sigma}f$ and $E_{K_0\dot+\sigma}g$ are trivial to handle by the $T(1)$ conditions \eqref{eq:localT1}, we may also assume that these terms (equivalently, the integrals $\int f d\mu$ and $\int g d\mu$) vanish, so that both $f$ and $g$ are expanded in terms of the martingale differences alone.

With this reduction stated, however, we will no indicate it explicitly, but simply write $\sum_{K\in\mathscr{D}}$, or just $\sum_K$, for the relevant sum above.

A cautious reader might notice that our fiddling with the definition of ``$K\dot+\sigma$'' will have some impact on the properties of the notion of goodness discussed above. However, the changes are essentially immaterial, since goodness is really only relevant to us when both the smaller and the bigger cube appear in our martingale difference expansion, and for such cubes, our randomisation is essentially unchanged. We will leave the detailed verification of this ``no harm done'' assertion to the interested reader; see also the discussion of ``good'' and ``really good'' cubes in \cite{Volberg:2013}.

\section{The dyadic representation theorem: detailed statement}\label{sec:details}

With the notation defined, we can provide additional detail to Theorem~\ref{thm:newRepr} as follows:

\begin{theorem}\label{thm:details}
In the Representation Theorem~\ref{thm:newRepr}, the various operators have the following form:
\begin{equation*}
  Q_k=\sum_{K\in\mathscr{D}}A_K^{(k)},\qquad
  R_k=\sum_{K\in\mathscr{D}}B_K^{(k)},
\end{equation*}
where the operators $A_K^{(k)}$ and $B_K^{(k)}$ satisfy the orthogonality relations
\begin{equation*}
\begin{split}
  A_K^{(k)} &=P_K^{(k+1)}A_K^{(k)}D_K^{(k+r)},\\
  A_K^{(-k)} &=D_K^{(k+r)}A_K^{(-k)}P_K^{(k+1)},\\
  B_K^{(k)} &=(P_K^{(k+r+1)}-P_K^{(k)})B_K^{(k)}D_K^{(k+r)},\\
  B_K^{(-k)} &=D_K^{(k+r)}B_K^{(-k)}(P_K^{(k+r)}-P_K^{(k)}),\qquad k\geq 1,
\end{split}
\end{equation*}
and their kernels $a_K^{(k)}(x,y)$ and $b^{(k)}_K(x,y)$ satisfy the bounds
\begin{equation*}
\begin{split}
    \abs{a_K^{(k)}(x,y)}
    &\leq C\frac{1_K(x)1_K(y)}{\ell(K)^n}+C\sum_{H:H^{(\abs{k})}=K}\frac{1_H(x)1_H(y)}{\mu(H)},\quad\abs{k}\geq 1, \\
    \abs{b_K^{(k)}(x,y)}
    &\leq C\frac{1_K(x)1_K(y)}{\ell(K)^n},\qquad  \abs{k}\geq 2.
\end{split}
\end{equation*}
Moreover, the paraproducts have the form
\begin{equation*}
  \Pi_b f
  =\sum_K D_K^{(r,\good)}b\Big(\ave{f}_K-E_{-\infty}f\Big),
\end{equation*}
where
\begin{equation*}
  E_{-\infty}f:=\begin{cases} \mu(\R^d)^{-1}\int_{\R^d}f d\mu, & \text{if }\mu(\R^d)<\infty, \\ 0, & \text{otherwise}.\end{cases}
\end{equation*}

If $\mu$ is doubling, the following additional statements hold:
\begin{itemize}
  \item The pointwise bound for $b_K^{(k)}(x,y)$ is also valid for $\abs{k}=1$.
  \item For each $U_k\in\{R_k,Q_k,R_k^*,Q_k^*\}$, we have $\Norm{U_k}{L^1(\mu)\to L^{1,\infty}(\mu)}\leq C\abs{k}$.
\end{itemize}
\end{theorem}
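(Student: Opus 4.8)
\emph{Proof strategy.} The plan is to follow the template of Hyt\"onen-type dyadic representation theorems but, as in Figiel \cite{Figiel:1988,Figiel:1990} and in the spirit of \cite{Volberg:2013}, to replace the probabilistic good/bad dichotomy (which would cost a full power of the logarithm) by a quantitative decomposition of $T$ into translation-type operators. First I would expand $\pair{Tf}{g}=\sum_{I,J\in\mathscr{D}}\pair{TD_If}{D_Jg}$; by the finitary set-up of Section~\ref{sec:finitary} this is a genuinely finite sum, so no rearrangement or convergence issue arises, and by passing from $T$ to $T^*$ when necessary I may assume throughout that $\ell(I)\le\ell(J)$, so that $I\subseteq J$ or $I\cap J=\emptyset$. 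I then sort the pairs by a common dyadic ancestor: given $I$, pass to $K=I^{(|k|+r)}$, where $r=r(d)$ is a fixed buffer and $|k|\ge 1$ records the relative scale, and let $J$ range over the descendants of $K$ between $0$ and $|k|$ generations below $K$ (this produces the summands $A_K^{(k)}$ of $Q_k$) or between $|k|$ and $|k|+r$ generations below $K$ (this produces the summands $B_K^{(k)}$ of $R_k$); the diagonal band $\ell(I)\asymp\ell(J)$ and the paraproduct are split off separately, as described below.

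The heart of the argument is the Figiel telescoping that produces the operators $A_K^{(k)},B_K^{(k)}$ together with the prefactor $\omega(2^{-|k|})$. For a pair $(I,J)$ with ancestor $K$, I would use that $D_If$ has $\mu$-mean zero to subtract a constant from $K(x,y)$, and then write the resulting kernel as a telescoping sum along a chain of dyadic cubes running from $K$ down to the positions of $I$ and $J$ inside it; each step of the chain changes the scale by one, and the H\"older estimate on $K(x,y)$ bounds the corresponding increment by $C\ell(K)^{-n}\omega(2^{-m})$ at relative scale $m$. Regrouping by the total relative scale $|k|$ then leaves operators whose kernels carry exactly one factor $\omega(2^{-|k|})$ (and no extra logarithm), which is pulled in front; what survives is $A_K^{(k)}$, resp.\ $B_K^{(k)}$. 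With the operators built this way, the orthogonality relations $A_K^{(k)}=P_K^{(k+1)}A_K^{(k)}D_K^{(k+r)}$, $A_K^{(-k)}=D_K^{(k+r)}A_K^{(-k)}P_K^{(k+1)}$, and their $B$-counterparts hold by construction: the input is always a single $D_I$ with $I^{(k+r)}=K$, the output is a combination of $D_J$ with $J$ in the prescribed range of generations below $K$, and the term $-P_K^{(k)}$ appears in the $B$-relations precisely because $R_k$ collects the output scales that $Q_k$ leaves out.

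Next I would read off the kernel bounds and extract the paraproducts. The size estimate $|a_K^{(k)}(x,y)|,|b_K^{(k)}(x,y)|\le C\ell(K)^{-n}1_K(x)1_K(y)$ is immediate from the standard estimate $|K(x,y)|\le C|x-y|^{-n}$ together with the support and normalisation properties of the Haar functions. The extra term $C\sum_{H:H^{(|k|)}=K}1_H(x)1_H(y)/\mu(H)$ in $a_K^{(k)}$ comes from the short-range pairs for which the kernel representation is not available and one must instead invoke the local $T(1)$ conditions~\eqref{eq:localT1}: these contribute an averaging, $E_H$-type, operator at the coarsest output scale of $A_K^{(k)}$, whose kernel $\mu(H)^{-1}1_H\otimes 1_H$ is pointwise large but $L^2$-normalised. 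For $R_k$ with $|k|\ge 2$ the buffer $r$ leaves enough room for the telescoping to terminate with a genuine kernel estimate, so no averaging term is needed. The paraproduct arises in the usual way from the terms in which $D_If$ is paired against the part of $g$ that is essentially $\ave{g}_K$: replacing that average by $\ave{g}_K-E_{-\infty}g$ and recognising $\pair{T1}{\varphi_I^i}$ (interpreted via \eqref{eq:localT1} and the kernel estimates) as a Carleson sequence yields exactly $\Pi_{T1}$, and symmetrically $\Pi_{T^*1}$; their $L^2(\mu)$ boundedness is the standard non-homogeneous paraproduct estimate.

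Finally, for the doubling refinements I would first establish the shift structure (the content of Section~\ref{sec:shifts}): when $\mu$ is doubling the Haar functions satisfy the stronger bound $\Norm{\varphi_I^i}{\infty}\le C\mu(I)^{-1/2}$, the averaging piece of $b_K^{(1)}$ involves only cubes $H$ with $\ell(H)\asymp\ell(K)$ and hence has a kernel of size $\le C\ell(K)^{-n}$, and each $A_K^{(k)},B_K^{(k)}$ can be written as an $O(1)$ combination of $L^2$-normalised Haar shift components of complexity $O(|k|)$, so that $R_k,Q_k$ become sums of $O(|k|)$ dyadic shifts of complexity $O(|k|)$. Granting this, the weak $(1,1)$ bound $\Norm{U_k}{L^1(\mu)\to L^{1,\infty}(\mu)}\le C|k|$ follows from the Calder\'on--Zygmund decomposition at height $\lambda$: the good part is handled by $L^2(\mu)$ boundedness, and for the bad part $b=\sum_Q b_Q$ one uses that a shift of complexity $m$ displaces Haar data by at most $m$ dyadic scales, so $U_kb_Q$ is supported in an $O(|k|)$-generation dyadic enlargement of $Q$ off which its integral vanishes; summing over the stopping cubes, the enlargement costs exactly the factor $|k|$, since one enlarges along the stopping tree by generations rather than by volume. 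I expect the main obstacle throughout to be not any individual estimate but the combinatorial bookkeeping of the decomposition --- organising the double sum so that the translation operators carry precisely the stated orthogonality, and so that exactly $\omega(2^{-|k|})$, with no spurious logarithmic factor, is shed at each relative scale --- which is the point where this linear argument must improve on both the probabilistic scheme and on earlier linear ones, and where the non-homogeneity (the interaction of averages with $\mu$) makes matters most delicate.
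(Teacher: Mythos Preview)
Your plan departs from the paper at the first move, and the departure creates a real gap. The paper's argument is \emph{essentially probabilistic}: the expectation $\mathbb{E}$ is used twice to insert two independent layers of goodness. First (Proposition~\ref{dyarepthm}) the Haar expansion is restricted to $r$-good smaller cubes $I$; second, and this is the key innovation (Lemma~\ref{lem:insertKgoodness}), one inserts $k$-goodness for the intermediate cube $H=I^{(r)}$, with $k$ tied to the translation parameter via $|m|\sim 2^{k-2}$. This scale-dependent goodness is what forces both $H$ and its translate $H\dot+m$ into the common dyadic ancestor $K=H^{(k)}$, and is the sole reason the decompositions $Q_k=\sum_K A_K^{(k)}$, $R_k=\sum_K B_K^{(k)}$ make sense. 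You propose instead to ``replace the probabilistic good/bad dichotomy\dots by a quantitative decomposition'' on a single grid; but on a fixed grid $H\dot+m$ need not lie in any dyadic ancestor of $H$ of controlled size, and in the non-homogeneous setting there is no substitute, since translation is not bounded on $L^2(\mu)$. Your sorting ``by a common dyadic ancestor $K=I^{(|k|+r)}$'' leaves $|k|$ undetermined by the pair $(I,J)$ unless $J\subset K$ already --- precisely what the two-level randomisation, and nothing else available here, arranges.

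Two further specific misreadings confirm that your scheme is not the paper's. The factor $\omega(2^{-|k|})$ does \emph{not} arise from a telescoping chain of kernel increments; it comes from a \emph{single} application of kernel smoothness (Lemmas~\ref{lem:RkKernel1} and \ref{lem:AkKernel1}), made possible because goodness gives $\dist(I,H\dot+m)\gtrsim\ell(K)$ outright. A telescope of length $k$ would reproduce exactly the logarithmic loss you are trying to avoid. And the extra term $\sum_{H:H^{(|k|)}=K}1_H\otimes 1_H/\mu(H)$ in $a_K^{(k)}$ has nothing to do with the $T(1)$ conditions or short-range pairs: $Q_k$ is built from the \emph{far} part $j<i-r$ (Proposition~\ref{case2acase2b}), and this term is simply the $-1_H/\mu(H)$ half of the difference $\ave{g}_{H\dot+m}-\ave{g}_H$, controlled via $\sum_{|m|\sim 2^{k-2}}\mu(H\dot+m)\le\mu(K)\le C\ell(K)^n$. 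The $T(1)$ hypotheses enter only for the diagonal $R_1$ (through $\Norm{T\varphi_I^i}{2}\le C$) and for the paraproducts (through $T1,T^*1\in BMO_\lambda^2$). Your account of the doubling refinements (shift structure, weak $(1,1)$ via Calder\'on--Zygmund) is broadly in line with Sections~\ref{sec:shifts}--\ref{sec:weak}, but these are downstream of a decomposition that, as proposed, does not go through in the non-homogeneous case.
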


The proof of Theorems~\ref{thm:newRepr} and \ref{thm:details} naturally splits into two parts: the algebraic identities that give the desired decomposition, and the estimates for the terms of this expansion. We deal with each of these tasks in turn in the following two sections.

\section{Dyadic representation: identities}\label{s3}

The following proposition is introduces goodness into the basic martingale difference expansion:

\begin{proposition}\label{dyarepthm}
Let $T$ be a Calder\'on-Zygmund operator, $f,g\in L^2(\mu)$ as in the finitary set-up of Section~\ref{sec:finitary}, and $r\in\mathbb{N}$ with $r\geq 2$. Then $T$ has the following expansion
\begin{equation*}
\begin{split}
\langle  Tf,g\rangle &=\frac{1}{\pi_{\textup{good}}}\mathbb{E}_{\sigma}\sum_{I,J\in\mathcal{D}^{\sigma}}1_{r\textup{-good}}(
    \operatorname{smaller}\{I,J\})\cdot \langle TD_If,D_Jg\rangle \\
    &= \frac{1}{\pi_{\textup{good}}}\mathbb{E}_{\sigma} \Big(
    \sum_{i,j\in\Z}\langle T(\chi_{ij}D_if),\psi_{ij}D_jg\rangle \Big)
\end{split}
\end{equation*}
where
\begin{equation*}
  \operatorname{smaller}\{I,J\}:=\begin{cases}
  I, & \mbox{if $\ell(I)\leq\ell(J)$},\\
  J, & \mbox{ if $\ell (I)>\ell(J)$},\end{cases}\qquad
  \phi_j=\sum_{\substack{I\in\mathcal{D}_j \\ I \ r\textup{-good}}}1_I,
\end{equation*}
and
\begin{equation*}
  \chi_{ij}:=
  \begin{cases}
   \phi_i, & \mbox{if $i\geq j$},\\
    1 & \mbox{ if $i<j$},\end{cases}\qquad
  \psi_{ij}:=
  \begin{cases}
  1, & \mbox{if $i\geq j$},\\
   \phi_j, & \mbox{ if $i<j$}.\end{cases}
\end{equation*}
\end{proposition}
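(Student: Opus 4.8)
The plan is to start from the unconditional martingale difference expansions $f = \sum_{I \in \mathscr{D}^\sigma} D_I f$ and $g = \sum_{J \in \mathscr{D}^\sigma} D_J g$ (valid termwise in our finitary set-up, since both sums are finite and the zeroth-order averages have been normalised away), and substitute them into $\pair{Tf}{g}$, giving $\pair{Tf}{g} = \sum_{I,J} \pair{TD_I f}{D_J g}$ for every fixed $\sigma$. Since the left side does not depend on $\sigma$, we may average over $\sigma$ freely. The first identity will then follow from the standard ``goodness insertion'' trick: for each pair $(I,J)$, write $K := \operatorname{smaller}\{I,J\}$ and note that whether $K \dot+ \sigma$ is $r$-good depends only on the translation components $\sigma_j$ on scales between $\ell(K)$ and $\ell(K^{(r)})$, whereas the relative configuration of the pair $\{I, J\}$ (equivalently, the value of $\pair{T D_I f}{D_J g}$ as a function that we are summing) is, after reindexing the sum over $\mathscr{D}^\sigma$ by $\mathscr{D}^0$, a quantity whose law is independent of that goodness event. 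This is precisely the content of Remark~\ref{independence} and the paragraph following it. Hence
\[
  \mathbb{E}_\sigma \sum_{I,J \in \mathscr{D}^\sigma} 1_{r\text{-good}}(\operatorname{smaller}\{I,J\}) \pair{TD_I f}{D_J g}
  = \pi_{\good} \cdot \mathbb{E}_\sigma \sum_{I,J \in \mathscr{D}^\sigma} \pair{TD_I f}{D_J g}
  = \pi_{\good} \pair{Tf}{g},
\]
which rearranges to the claimed formula after dividing by $\pi_{\good}$.

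For the second identity I would simply collect terms by the side-lengths $\ell(I) = 2^{-i}$ and $\ell(J) = 2^{-j}$. Fix $i, j \in \Z$ and consider the inner sum over $I \in \mathscr{D}^\sigma_i$, $J \in \mathscr{D}^\sigma_j$. If $i \geq j$ then $I$ is the smaller cube, so the indicator $1_{r\text{-good}}(I)$ restricts the sum to $r$-good $I$; since $D_I f$ is supported on $I$ and these supports are disjoint across $I \in \mathscr{D}^\sigma_i$, we may insert the factor $\phi_i = \sum_{I \in \mathscr{D}_i,\, I\ r\text{-good}} 1_I$ inside $T$ without changing anything: $\sum_{I\ r\text{-good}} D_I f = \phi_i \sum_I D_I f = \phi_i D_i f = \chi_{ij} D_i f$, while the $g$-side is unrestricted, i.e.\ $\psi_{ij} D_j g = D_j g$. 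Symmetrically, if $i < j$ then $J$ is the smaller cube, the roles reverse, and we get $\chi_{ij} D_i f = D_i f$ together with $\psi_{ij} D_j g = \phi_j D_j g$. In both cases the inner sum equals $\pair{T(\chi_{ij} D_i f)}{\psi_{ij} D_j g}$, and summing over $i, j$ gives the second displayed expansion. (One should note $\chi_{ij}$, $\psi_{ij}$ are bounded by $1$ and the sums over $i,j$ are finite in the finitary set-up, so no convergence issue arises.)

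The only genuinely delicate point is the independence assertion underpinning the first identity: one must be careful that the randomisation truncated to scales in $[2^{-N}\ell(K_1), \ell(K_1))$ (as in Section~\ref{sec:finitary}) still makes ``position of the pair'' and ``$r$-goodness of the smaller cube'' independent for all pairs $\{I,J\}$ that actually contribute. As remarked in the finitary set-up, goodness only matters when both cubes appear in the (finite) martingale expansion, and for such cubes the relevant scales lie strictly inside the randomised range, so the decomposition $\sigma = (\sigma_j)_{j \leq j_0} \sqcup (\sigma_j)_{j_0 < j \leq j_1} \sqcup (\sigma_j)_{j > j_1}$ into ``scales below $K$'', ``scales between $K$ and $K^{(r)}$'', and ``scales above $K^{(r)}$'' still separates the two events, and the computation goes through verbatim. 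I would state this and refer to Remark~\ref{independence} and to \cite{Volberg:2013} for the routine verification, exactly as the text already signals.
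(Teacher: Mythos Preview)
Your second identity is fine and matches the paper's argument exactly: collecting by scales $i,j$ and recognising $\sum_{I\in\mathscr{D}_i,\,I\ r\text{-good}}D_I f=\phi_i D_i f$.

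The first identity, however, has a genuine gap. You claim that for fixed $I,J\in\mathscr{D}^0$ with $K=\operatorname{smaller}\{I,J\}$, the event ``$K\dot+\sigma$ is $r$-good'' is independent of the random variable $\pair{TD_{I\dot+\sigma}f}{D_{J\dot+\sigma}g}$. This is \emph{not} what Remark~\ref{independence} says, and in fact it is false in the critical range. Remark~\ref{independence} only gives independence of the $r$-goodness of a cube from the position of \emph{that same cube}. But when, say, $K=I$ and $\ell(I)<\ell(J)\leq 2^r\ell(I)$, the position of $J\dot+\sigma$ depends on the components $\sigma_j$ with $2^{-j}<\ell(J)$, which includes the scales $\ell(I)\leq 2^{-j}<\ell(J)$ that also determine the $r$-goodness of $I\dot+\sigma$. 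Hence $D_{J\dot+\sigma}g$, and therefore the pairing, is \emph{not} independent of $1_{r\text{-good}}(I\dot+\sigma)$ in general. Your one-step insertion of goodness on the smaller cube does not go through.

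The paper circumvents this with a two-step argument. First expand only $f$ (keeping $g$ fixed, hence $\sigma$-independent), so that Remark~\ref{independence} legitimately allows inserting $1_{r\text{-good}}(I\dot+\sigma)$:
\[
\pair{Tf}{g}=\tfrac{1}{\pi_{\good}}\mathbb{E}_\sigma\sum_{I}1_{r\text{-good}}(I\dot+\sigma)\pair{TD_{I\dot+\sigma}f}{g}.
\]
Now expand $g$. In the half-sum with $\ell(I)>\ell(J)$, the position of \emph{both} $I\dot+\sigma$ and $J\dot+\sigma$ depends only on scales $2^{-j}<\ell(I)$, disjoint from the goodness scales of $I$, so independence \emph{does} hold there and the goodness indicator on $I$ can be removed. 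Comparing with the basic identity $\pair{Tf}{g}=\mathbb{E}_\sigma\sum_{I,J}\pair{TD_If}{D_Jg}$ then shows that the $\ell(I)\leq\ell(J)$ half already carries the goodness on $I$; the symmetric argument handles $\ell(I)>\ell(J)$ with goodness on $J$. The point is that independence is only ever invoked between goodness of a cube and positions of cubes \emph{no larger} than it.
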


\begin{proof}
This result is similar to Proposition 3.5 of \cite{Hytonen:repre} which we are going to replicate here with our definition of goodness.

Recall that 
\begin{equation*}
f=\sum_{I\in\mathcal{D}^0}D_{I\dot{+}\sigma}f
\end{equation*}
for any $\sigma\in(\{0,1\}^d)^{\mathbb{Z}}$; and we can also take the expectation $\mathbb{E}_{\sigma}$ of both sides of this identity.


We make use of the above random $D_{I\dot{+}\sigma}$ expansion of f, multiply and divide by
\begin{equation*}
\pi_{\textup{good}}=\mathbb{E}_{\sigma} 1_{r\textup{-good}}(I\dot{+}\sigma)
\end{equation*}
and use the independence from Remark \ref{independence} to get:
\begin{align*}
\langle Tf,g\rangle &=\mathbb{E}_{\sigma}\sum_I\langle TD_{I\dot{+}{\sigma}}f,g\rangle\\
&=\tfrac{1}{\pi_{\textup{good}}}\sum_I\mathbb{E}_{\sigma}1_{r\textup{-good}}(I\dot{+}{\sigma})\mathbb{E}_{\sigma}\langle TD_{I\dot{+}{\sigma}}f,g\rangle\\
&=\tfrac{1}{\pi_{\textup{good}}}\mathbb{E}_{\sigma}\sum_I1_{r\textup{-good}}(I\dot{+}{\sigma})\langle T D_{I\dot{+}{\sigma}}f,g\rangle\\
&=\tfrac{1}{\pi_{\textup{good}}}\mathbb{E}_{\sigma}\sum_{I,J}1_{r\textup{-good}}(I\dot{+}{\sigma})\langle TD_{I\dot{+}{\sigma}}f,D_{J\dot{+}{\sigma}}g\rangle.
\end{align*}

On the other hand, using independence again in half of this double sum, we have
\begin{align*}
&\tfrac{1}{\pi_{\textup{good}}}\sum_{\ell(I)>\ell(J)}\mathbb{E}_{\sigma}1_{r\textup{-good}}(I\dot{+}{\sigma})\langle TD_{I\dot{+}{\sigma}}f,D_{J\dot{+}{\sigma}}g\rangle\\
&=\tfrac{1}{\pi_{\textup{good}}}\sum_{\ell(I)>\ell(J)}\mathbb{E}_{\sigma}1_{r\textup{-good}}(I\dot{+}{\sigma})\mathbb{E}_{\sigma}\langle TD_{I\dot{+}{\sigma}}f,D_{J\dot{+}{\sigma}}g\rangle\\
&=\mathbb{E}_{\sigma}\sum_{\ell(I)>\ell(J)}\langle TD_{I\dot{+}{\sigma}}f,D_{J\dot{+}{\sigma}}g\rangle,
\end{align*}
and hence
\begin{align*}
\langle Tf,g\rangle=\tfrac{1}{\pi_{\textup{good}}}&\mathbb{E}_{\sigma}\sum_{\ell(I)\leq\ell(J)}1_{r\textup{-good}}(I\dot{+}{\sigma})\langle TD_{I\dot{+}{\sigma}}f,D_{J\dot{+}{\sigma}}g\rangle\\
+&\mathbb{E}_{\sigma}\sum_{\ell(I)>\ell(J)}\langle TD_{I\dot{+}{\sigma}}f,D_{J\dot{+}{\sigma}}g\rangle.
\end{align*}

Comparison with the basic identity
\begin{equation*}
\langle Tf,g\rangle=\mathbb{E}_{\sigma}\sum_{I,J}\langle TD_{I\dot{+}{\sigma}}f,D_{J\dot{+}{\sigma}}g\rangle
\end{equation*}
shows that 
\begin{align*}
&\mathbb{E}_{\sigma}\sum_{\ell(I)\leq\ell(J)}\langle TD_{I\dot{+}{\sigma}}f,D_{J\dot{+}{\sigma}}g\rangle\\
&=\tfrac{1}{\pi_{\textup{good}}}\mathbb{E}_{\sigma}\sum_{\ell(I)\leq\ell(J)}1_{r\textup{-good}}(I\dot{+}{\sigma})\langle TD_{I\dot{+}{\sigma}}f,D_{J\dot{+}{\sigma}}g\rangle.
\end{align*}

Symmetrically, we also have
\begin{align*}
&\mathbb{E}_{\sigma}\sum_{\ell(I)>\ell(J)}\langle TD_{I\dot{+}{\sigma}}f,D_{J\dot{+}{\sigma}}g\rangle\\
&=\tfrac{1}{\pi_{\textup{good}}}\mathbb{E}_{\sigma}\sum_{\ell(I)>\ell(J)}1_{r\textup{-good}}(J\dot{+}{\sigma})\langle TD_{I\dot{+}{\sigma}}f,D_{J\dot{+}{\sigma}}g\rangle,
\end{align*}
and this completes the proof of the first asserted identity. The second one is a simple restatement, as seen by the computation 
\begin{align*}
\sum_{I,J} &1_{r\textup{-good}}(
   \operatorname{smaller}\{I,J\})\langle TD_If,D_Jf\rangle\\
&=\sum_{i\geq j}\langle T(\phi_i D_i f), D_jg\rangle+\sum_{i<j}\langle TD_if,\phi_jD_jg\rangle \\
&=\sum_{i,j}\langle T(\chi_{ij}D_if),\psi_{ij}D_jg\rangle. \qedhere
\end{align*}
\end{proof}

%
%

We split the subsequent analysis of the series into four cases, depending on whether $i\geq j$ or $i<j$, and whether $\abs{i-j}\leq r$ or $\abs{i-j}>r$. Since the cases $i>j$ and $i<j$ are dual to each other, we only explicitly deal with $i\geq j$, which still splits into the two case $0\leq i-j\leq r$ and $j<i-r$.

\begin{proposition}\label{case1}
Let $T$ be a Calder\'on-Zygmund operator, $f,g\in L^2(\mu)$ as in Section~\ref{sec:finitary}, and $r\in\mathbb{N}$ such that $r\geq 2$. Then
\begin{equation*}
\sum_{\substack{i,j \\ 0\leq i-j \leq r}} \langle T(\chi_{ij}D_i f),\psi_{ij}D_j g\rangle \\
   = \sum_{m\in\mathbb{Z}^d}\sum_H\langle TD_H^{(r,good)}f,P_{H\dot{+}m}^{(r+1)}g\rangle
\end{equation*}
\end{proposition}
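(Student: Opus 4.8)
I would prove Proposition~\ref{case1} by a purely combinatorial reindexing of a finite sum; there is no analytic content here — no cancellation, no kernel estimate, only the $T(1)$-free algebra of martingale differences.

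First I would exploit the constraint $0\le i-j\le r$, which forces $i\ge j$, so that throughout this range $\chi_{ij}=\phi_i$ and $\psi_{ij}=1$. Since each $D_I f$ with $I\in\mathcal{D}_i$ is supported on $I$ and the cubes of $\mathcal{D}_i$ are pairwise disjoint, multiplication by $\phi_i=\sum_{I\in\mathcal{D}_i,\ I\ r\textup{-good}}1_I$ merely selects the $r$-good cubes, i.e.\ $\phi_i D_i f=\sum_{I\in\mathcal{D}_i,\ I\ r\textup{-good}}D_I f$; grouping these cubes by their $r$th ancestor $H=I^{(r)}\in\mathcal{D}_{i-r}$ gives $\phi_i D_i f=\sum_{H\in\mathcal{D}_{i-r}}D_H^{(r,good)}f$. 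Setting $k=i-r$ and interchanging the (finitely many, by the finitary set-up of Section~\ref{sec:finitary}) summations, the left-hand side becomes
\[
  \sum_k\sum_{H\in\mathcal{D}_k}\Big\langle TD_H^{(r,good)}f,\ \sum_{j=k}^{k+r}D_j g\Big\rangle .
\]

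Next I would rewrite the truncated martingale sum over $g$ in a form adapted to the scale $2^{-k}$. Using $D_{H'}^{(l)}g=\sum_{I:\,I^{(l)}=H'}D_I g$ together with the equivalence ``$I^{(l)}\in\mathcal{D}_k$ iff $I\in\mathcal{D}_{k+l}$'', one checks, for each fixed $k$, that
\[
  \sum_{H'\in\mathcal{D}_k}P_{H'}^{(r+1)}g
  =\sum_{l=0}^{r}\sum_{H'\in\mathcal{D}_k}\sum_{I:\,I^{(l)}=H'}D_I g
  =\sum_{l=0}^{r}\sum_{I\in\mathcal{D}_{k+l}}D_I g
  =\sum_{j=k}^{k+r}D_j g .
\]
Substituting this, the left-hand side equals $\sum_k\sum_{H\in\mathcal{D}_k}\sum_{H'\in\mathcal{D}_k}\langle TD_H^{(r,good)}f,\ P_{H'}^{(r+1)}g\rangle$. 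Finally, at each fixed scale the cubes of $\mathcal{D}_k$ tile $\R^d$, so for any fixed $H\in\mathcal{D}_k$ the map $m\mapsto H\dot{+}m:=H+m\,\ell(H)$ is a bijection of $\mathbb{Z}^d$ onto $\mathcal{D}_k$; reindexing the innermost sum by $H'=H\dot{+}m$, collapsing $\sum_k\sum_{H\in\mathcal{D}_k}$ into $\sum_H$, and swapping with the finite sum over $m$ yields exactly $\sum_{m\in\mathbb{Z}^d}\sum_H\langle TD_H^{(r,good)}f,\ P_{H\dot{+}m}^{(r+1)}g\rangle$, the claimed identity.

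The only part that needs genuine attention is the scale bookkeeping — in particular the off-by-one that makes $P^{(r+1)}$ (and not $P^{(r)}$) appear, reflecting the $r+1$ scales $k,k+1,\dots,k+r$ spanned by the pairs $(i,j)$ with $0\le i-j\le r$, and the passage from a double sum over $(H,H')$ at a common scale to a sum over $H$ together with a relative translation $m$. All rearrangements are legitimate because, in the finitary set-up, only finitely many cubes $H,H'$ contribute nontrivially, so no convergence issue arises.
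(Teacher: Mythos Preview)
Your proof is correct and follows essentially the same combinatorial reindexing as the paper: group the $r$-good cubes $I$ by their ancestor $H=I^{(r)}$, collect the $g$-side into $P_{H'}^{(r+1)}g$ over cubes $H'$ at the scale of $H$, and reparametrize $H'=H\dot{+}m$. The only cosmetic difference is that the paper uses $k=i-j$ as the running index while you use the scale of $H$, but the argument is the same.
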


\begin{remark}\label{rem:sym-break}
A similar argument would show that
\begin{equation*}
\sum_{\substack{i,j \\ 0< j-i \leq r}} \langle T(\chi_{ij}D_i f),\psi_{ij}D_j g\rangle \\
   = \sum_{m\in\mathbb{Z}^d}\sum_H \langle TP_{H\dot{+}m}^{(r)}f,D_{H}^{(r,good)}g\rangle,
\end{equation*}
where the slight symmetry-break ($P^{(r)}_{H\dot+m}$ vs. $P^{(r+1)}_{H\dot+m}$) is caused by the fact that the diagonal $i=j$ is included in only one of the cases.
\end{remark}


\begin{proof}[Proof of Proposition~\ref{case1}]
\begin{align*}
\sum_{0\leq i-j\leq r}\langle T(\phi_i D_i f),D_jg\rangle&=\sum_{0\leq k\leq r}\sum_{I \ r\textup{-good}}\sum_{J:2^{r-k}\ell(I)=\ell(J)}\langle TD_If,D_Jg\rangle\\
&=\sum_{m\in\mathbb{Z}^d}\sum_{0\leq k\leq r}\sum_H\sum_{\substack{I: I^{(r)}=H\\I \ r\textup{-good}}}\sum_{J:J^{(k)}=H\dot{+}m}\langle TD_If,D_Jg\rangle\\
&=\sum_{m\in\mathbb{Z}^d}\sum_{0\leq k\leq r}\sum_H\langle TD_H^{(r,good)}f,D_{H\dot{+}m}^{(k)}g\rangle\\
&=\sum_{m\in\mathbb{Z}^d}\sum_H\langle TD_H^{(r,good)}f,P_{H\dot{+}m}^{(r+1)}g\rangle.\qedhere
\end{align*}
\end{proof}

\begin{proposition}\label{case2acase2b}
Let $T$ be a Calder\'on-Zygmund operator, $f,g\in L^2(\mu)$ as in Section~\ref{sec:finitary}, and $r\in\mathbb{N}$ such that $r\geq 2$. Then
\begin{align*}
\sum_i\sum_{j<i-r} &\langle T(\chi_{ij}D_i f),\psi_{ij}D_j g\rangle \\
& = \sum_{\substack{m\in\mathbb{Z}^d \\ m\neq 0}}\sum_H\langle TD_H^{(r,good)}f,1_{H\dot{+}m}\rangle\Big(\langle g\rangle_{H\dot{+}m}-\langle g\rangle_H\Big)\\
&\qquad+\sum_H\langle f,D_H^{(r,good)}T^*1\rangle\Big(\langle g\rangle_H-E_{-\infty}g\Big).
\end{align*}
where
$\displaystyle
E_{-\infty}g:=\begin{cases}
\tfrac{1}{\mu(\mathbb{R}^d)}\int g(x)d\mu(x) & \text{if } \mu(\mathbb{R}^d)<\infty\\
0 & \text{otherwise. }
\end{cases}$
\end{proposition}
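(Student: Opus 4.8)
The plan is to expand the left-hand side over dyadic cubes $I, J \in \mathscr{D}$ with $I$ $r$-good, $I^{(r)} = H$ (so $\phi_i D_i f$ contributes $D_H^{(r,\good)}f$ after collecting cubes by their $r$-th ancestor, exactly as in the proof of Proposition~\ref{case1}), and with $\ell(J) > 2^r \ell(I) = 2^r \ell(I^{(r)}) \cdot 2^{-r}$, i.e. $J = (H\dot+m)^{(s)}$ for some $s \geq 1$ and some dyadic translate parameter $m \in \Z^d$ arising from the position of $H$ relative to the coarser grid $\mathscr{D}_{\ell(J)}$. The key point, inherited from the one-third trick / goodness setup, is that when $I$ is $r$-good and $J$ is so much larger, $I$ is deep inside a single child of $J$; hence on $\supp D_I f \subseteq I$ the function $D_J g$ is \emph{constant}, equal to its value on that child. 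So $\langle T D_I f, D_J g\rangle$ can be rewritten using the constancy of $D_J g$ on a neighbourhood of $\supp D_I f$, which is what converts the $J$-sum into the paraproduct-type expressions.

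First I would fix $H$ and sum over all $J \supsetneq (H\dot+m)^{(r)}$ (for the relevant translate $m$): since $D_J g$ is constant on the child of $J$ containing $H\dot+m$, and the children-values telescope, $\sum_{J \supsetneq (H\dot+m)^{(r)}} (D_J g)\big|_{H\dot+m} = \ave{g}_{H\dot+m} - E_{-\infty}g$ when the whole sum over ancestors is taken (using that in the finitary setup, or via $E_{-\infty}$, the top term is controlled). Here I must separate the contribution of the \emph{unique} ancestor chain passing through $H$ itself versus the genuinely translated ones $H\dot+m$ with $m \neq 0$: for the $m = 0$ piece, the cube $J$ is actually an ancestor of $H$ in $\mathscr{D}$, and one pairs $T D_H^{(r,\good)}f$ against $1_H(\ave{g}_H - E_{-\infty}g)$ plus, by moving $T$ to $T^*$, the term $\langle f, D_H^{(r,\good)} T^*1\rangle(\ave g_H - E_{-\infty}g)$ — this is where $T^*1$ enters, via $\langle T D_H^{(r,\good)}f, 1_H\rangle$ completed to $\langle T D_H^{(r,\good)}f, 1\rangle = \langle D_H^{(r,\good)}f, T^*1\rangle$ minus the tails, and the tails $\langle T D_H^{(r,\good)}f, 1_{(H)^c}\rangle$ reassemble into the $m \neq 0$ sum. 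For the $m \neq 0$ pieces the cube $H\dot+m$ is a sibling-region not containing $H$, and $D_J g$ restricted there telescopes to $\ave g_{H\dot+m} - \ave g_H$ (the subtraction of $\ave g_H$ coming from the fact that the common ancestors of $H$ and $H\dot+m$ contribute the same constant to both and cancel).

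The main obstacle I expect is the bookkeeping of exactly which "translates" $H\dot+m$ occur and with what multiplicity: as $J$ ranges over dyadic cubes in $\mathscr{D}$ strictly containing the $r$-th ancestor of cubes with $I^{(r)} = H$, the grandparent at each scale determines a coarse cube, and re-indexing these by a single $m$-parameter (so that the answer is a clean double sum over $m$ and $H$) requires care — it is the non-homogeneous analogue of the reorganisation already done in Proposition~\ref{case1}, but now the "good" cube $I$ sits inside $J$ rather than straddling scales, and one has to check that the constancy-of-$D_J g$-on-$\supp D_I f$ property really does hold for \emph{all} $J$ with $\ell(J) > 2^r\ell(I)$ once $I$ is $r$-good (this uses $\dist(I,\partial I^{(r)}) \geq \tfrac14\ell(I^{(r)})$ and that any larger dyadic $J$ has $I^{(r)}$, hence $I$, well inside one of its children). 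Once that geometric fact and the telescoping of $\sum_J D_J g$ are in hand, collecting the terms according to $m = 0$ versus $m \neq 0$ and recognising the $m=0$ collapse as the paraproduct-with-$T^*1$ term is a routine rearrangement; the identity then follows by summing over $H$ and over the admissible $m$.
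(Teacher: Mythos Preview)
Your plan rests on a misstep: you propose to simplify $\langle TD_If,D_Jg\rangle$ by using that $D_Jg$ is constant on (a neighbourhood of) $\supp D_If$. But $T$ is a non-local Calder\'on--Zygmund operator, so $TD_If$ is \emph{not} supported on $I$ --- it lives on all of $\R^d$ --- and the local constancy of $D_Jg$ near $I$ gives you nothing for this pairing. (That device is useful for $\langle D_If,D_Jg\rangle$, or in an inside/outside split where the far piece is estimated separately; here, however, the goal is an exact identity, not an estimate, and you make no such split.) There is also a size confusion: the condition $j<i-r$ means precisely $\ell(J)>\ell(H)$, so the relevant $J$ are the cubes strictly larger than $H$, whereas you write ``$J\supsetneq(H\dot+m)^{(r)}$'' and speak of $(H\dot+m)^{(s)}$ for $s\geq 1$ --- a parametrisation that is neither correct (wrong scale) nor bijective (each large $J$ contains many $H\dot+m$'s). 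The cubes $H\dot+m$ in the target formula have the \emph{same} side-length as $H$; they are not ancestors of anything.

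The paper's proof is much shorter and avoids all of this. One simply telescopes the $j$-sum \emph{inside the pairing}: $\sum_{j<i-r}D_jg=E_{i-r}g-E_{-\infty}g$, so
\[
\sum_{j<i-r}\langle T\phi_iD_if,D_jg\rangle=\langle T\phi_iD_if,E_{i-r}g\rangle-\langle T\phi_iD_if,1\rangle E_{-\infty}g.
\]
The second term is already $\langle D_H^{(r,\good)}f,T^*1\rangle E_{-\infty}g$ after summing over $i$ and collecting $I$'s by $H=I^{(r)}$. For the first, $E_{i-r}g=\sum_{J:\ell(J)=\ell(H)}\ave{g}_J1_J$, and every such $J$ is $H\dot+m$ for a unique $m\in\Z^d$; thus $\langle TD_H^{(r,\good)}f,E_{i-r}g\rangle=\sum_m\ave{g}_{H\dot+m}\langle TD_H^{(r,\good)}f,1_{H\dot+m}\rangle$. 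Now add and subtract $\ave{g}_H$: the difference gives the $m\neq 0$ sum (the $m=0$ summand vanishes), and the subtracted piece is $\ave{g}_H\sum_m\langle TD_H^{(r,\good)}f,1_{H\dot+m}\rangle=\ave{g}_H\langle D_H^{(r,\good)}f,T^*1\rangle$, which combines with the $E_{-\infty}g$ term to produce $(\ave{g}_H-E_{-\infty}g)\langle f,D_H^{(r,\good)}T^*1\rangle$. No goodness or constancy reasoning on the $g$-side is needed at any point.
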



\begin{proof}
Before starting we want to point out that $\sum_{j<i-r} D_jg=E_{i-r}g-E_{-\infty}g$. Once this remark have been done we can proceed to the proof.
\begin{align*}
\sum_i\sum_{j<i-r} &\langle T(\chi_{ij}D_i f),\psi_{ij}D_j g\rangle =\sum_i\sum_{j<i-r}\langle T\phi_iD_if,D_jg\rangle\\
&=\sum_i\langle T\phi_iD_if,\sum_{j<i-r}D_jg\rangle
=\sum_i\langle T\phi_iD_if,E_{i-r}g-E_{-\infty}g\rangle.
\end{align*}
Recalling the definition of $\phi_i$, the two part of this sum can be written as
\begin{align*}
\sum_i &\langle T\phi_iD_if,E_{-\infty}g\rangle=\sum_i\langle T\phi_iD_if,1\rangle E_{-\infty}g\\
&=\sum_i\langle \phi_iD_if,T^*1\rangle E_{-\infty}g
=\sum_H\langle D_H^{(r,good)}f,T^*1\rangle E_{-\infty}g
\end{align*}
and
\begin{align*}
\sum_i &\langle T\phi_iD_if,E_{i-r}g\rangle=\sum_{\substack{I \ r\textup{-good} \\ J:\ell(J)=2^r\ell(I)}}\langle TD_If,E_Jg\rangle\\
&=\sum_{\substack{H,J \\ \ell(H)=\ell(J)}}\langle T\sum_{\substack{I:I^{(r)}=H \\ I \ r\textup{-good}}}D_If,E_Jg\rangle
=\sum_{m\in\mathbb{Z}^d}\sum_H\langle TD_H^{(r,good)}f,E_{H\dot{+}m}g\rangle\\
&=\sum_{m\in\mathbb{Z}^d}\sum_H\langle TD^{(r,good)}_Hf, 1_{H\dot{+}m}\rangle\big(\langle g\rangle_{H\dot{+}m}-\langle g\rangle_H\big)\\
&\quad \quad +\sum_H\langle TD_H^{(r,good)}f,1\rangle\langle g\rangle_H.
\end{align*}
We can add the restriction $m\neq 0$, since the summand vanishes for $m=0$ in any case.
The self-adjointness of the operator $D_H^{(r,good)}$ finishes the proof.
\end{proof}


The previous two propositions both introduce a double series over terms of the form $\Phi(H,H\dot+m)$. The following lemma provides a useful rearrangement of such summations.

\begin{lemma}\label{lem:insertKgoodness}
\begin{equation*}
  \mathbb{E}\sum_{m\in\Z^d\setminus\{0\}}\sum_H\Phi(H,H\dot+m)
  =\frac{1}{\pi_{\good}}\mathbb{E}\sum_{k=2}^\infty\sum_{\abs{m}\sim 2^{k-2}}\sum_{H\,k\text{-good}}\Phi(H,H\dot+m),
\end{equation*}
where
\begin{equation*}
  \abs{m}\sim 2^{k-2}
  \qquad\overset{\operatorname{def}}{\Leftrightarrow}\qquad 2^{k-3}<\abs{m}\leq 2^{k-2}.
\end{equation*}
\end{lemma}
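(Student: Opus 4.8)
The plan is to prove the identity by unfolding the left-hand side, inserting the probabilistic definition of $k$-goodness for the \emph{pair} $(H, H\dot+m)$ viewed through their common ancestor, and then re-summing. Concretely, I would first observe that the left-hand side, being an expectation over $\sigma$, is unchanged if we multiply and divide by $\pi_{\good} = \mathbb{E}_\sigma 1_{k\text{-good}}(\,\cdot\,)$ for an appropriate integer $k = k(m)$ attached to each $m$. The natural choice is to let $k$ be determined by the magnitude of $m$: if $2^{k-3} < \abs{m} \le 2^{k-2}$, then the two cubes $H$ and $H\dot+m$ (which have the same side length $\ell(H)$) are ``separated by roughly $2^k$ scales'' in the sense that their smallest common dyadic ancestor has side length comparable to $2^k\ell(H)$; more precisely, $H$ is $k$-good (i.e.\ $\dist(H,\partial H^{(k)}) \ge \tfrac14\ell(H^{(k)})$) exactly when $H\dot+m$ with $\abs m \sim 2^{k-2}$ is still contained in $H^{(k)}$ and the pair sits nicely inside it. So the map $m \mapsto k$ with $\abs m \sim 2^{k-2}$ partitions $\Z^d\setminus\{0\}$ into the shells indexed by $k \ge 2$, matching the right-hand side's outer sum.

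Next I would exploit independence in the style of the proof of Proposition~\ref{dyarepthm}: by Remark~\ref{independence}, the $k$-goodness of $H = Q\dot+\sigma$ depends only on the coordinates $\sigma_j$ with $\ell(H) \le 2^{-j} < \ell(H^{(k)})$, while the quantity $\Phi(H, H\dot+m)$ — which involves $D$-type operators and averages on $H$ and its translate $H\dot+m$, both cubes of side $\ell(H)$ living inside the fixed ancestor $H^{(k)}$ — depends only on the \emph{smaller-scale} randomisation together with the position of $H^{(k)}$, hence is independent of the event $\{H\ k\text{-good}\}$. (Here one uses that $H\dot+m$ and $H$ share the ancestor structure at scales $\ge \ell(H^{(k)})$ once $\abs m \sim 2^{k-2}$, so $H\dot+m$ is also of the ``same type'' relative to $H^{(k)}$.) Therefore
\begin{equation*}
  \mathbb{E}_\sigma \Phi(H, H\dot+m)
  = \frac{1}{\pi_{\good}}\,\mathbb{E}_\sigma\big[1_{k\text{-good}}(H)\big]\cdot\mathbb{E}_\sigma \Phi(H, H\dot+m)
  = \frac{1}{\pi_{\good}}\,\mathbb{E}_\sigma\big[1_{k\text{-good}}(H)\,\Phi(H, H\dot+m)\big],
\end{equation*}
and summing over $H$ and then over $m$ grouped into the shells $\abs m \sim 2^{k-2}$ gives the claimed formula.

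The main obstacle is the bookkeeping that makes the shell decomposition $m \mapsto k$ and the goodness insertion actually compatible — that is, verifying that for $\abs m \sim 2^{k-2}$ the pair $(H, H\dot+m)$ genuinely ``sees'' only the randomisation at scales below $\ell(H^{(k)})$, so that the independence step is legitimate, and conversely that every $m \ne 0$ falls into exactly one such shell with $k \ge 2$ (the lower cutoff $k\ge 2$ being why one needs $\abs m \ge 2^{-1}$, i.e.\ $m \ne 0$ for integer $m$, to start the count). One must also be slightly careful about the finitary set-up of Section~\ref{sec:finitary}: the modified definition of $K\dot+\sigma$ only randomises on a finite range of scales, so the sum over $k$ is effectively finite and the ``$k$-goodness'' used here must be the version adapted to that set-up (the ``no harm done'' remark). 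Once these points are pinned down, the re-summation is purely formal: reindex the double sum $\sum_{m\neq 0}\sum_H$ as $\sum_{k\ge2}\sum_{\abs m\sim 2^{k-2}}\sum_H$, restrict the inner sum to $k$-good $H$ at the cost of the factor $\pi_{\good}^{-1}$ via the independence identity above, and pull the expectation back outside.
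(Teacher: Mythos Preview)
Your proposal is correct and follows essentially the same route as the paper: partition $\Z^d\setminus\{0\}$ into the shells $\abs{m}\sim 2^{k-2}$ for $k\ge 2$, then for each shell insert the indicator $1_{k\text{-good}}(H)$ at the cost of $\pi_{\good}^{-1}$ via the independence of $k$-goodness and position (Remark~\ref{independence}). One remark: your description of the independence is slightly over-engineered---you need only that $\Phi(H,H\dot+m)$ depends on the \emph{position} of $H$ alone (since $m$ is a fixed integer shift), hence on $\sigma_j$ with $2^{-j}<\ell(H)$, which is disjoint from the coordinates $\ell(H)\le 2^{-j}<\ell(H^{(k)})$ governing $k$-goodness; there is no need to track $H^{(k)}$ or to claim any ``exactly when'' relation between $k$-goodness and the containment $H\dot+m\subset H^{(k)}$ (that implication is one-directional and is the content of the next lemma, not this one).
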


\begin{proof}
Since every $m\in\Z^d\setminus\{0\}$ satisfies $\abs{m}\sim 2^{k-2}$ for a unique $k\geq 2$, and the $k$-goodness of $H$ is independent of the position of $H$ (and hence of $H\dot+m$), we have
\begin{equation*}
\begin{split}
  \mathbb{E} &\sum_{m\in\Z^d\setminus\{0\}}\sum_{H\in\mathscr{D}^\sigma}\Phi(H,H\dot+m) \\
  &=\frac{1}{\pi_{\good}}\sum_{k=2}^\infty\sum_{\abs{m}\sim 2^{k-2}} \sum_{H\in\mathscr{D}^0} \mathbb{E}(1_{k\text{-good}}(H\dot+\sigma))
     \mathbb{E}\Phi(H\dot+\sigma,H\dot+\sigma\dot+m) \\
  &=\frac{1}{\pi_{\good}}\sum_{k=2}^\infty\sum_{\abs{m}\sim 2^{k-2}} \sum_{H\in\mathscr{D}^0} \mathbb{E}\Big(1_{k\text{-good}}(H\dot+\sigma)
 \Phi(H\dot+\sigma,H\dot+\sigma\dot+m)\Big) \\
   &=\frac{1}{\pi_{\good}}\mathbb{E}\sum_{k=2}^\infty\sum_{\abs{m}\sim 2^{k-2}} \sum_{H\in\mathscr{D}^\sigma\,k\text{-good}} \Phi(H,H\dot+m).\qedhere
\end{split}
\end{equation*}
\end{proof}

The usefulness of the previous rearrangement is in the following:

\begin{lemma}
If $H$ is $k$-good and $\abs{m}\leq 2^{k-2}$, then $H\dot+ m\subset H^{(k)}$.
\end{lemma}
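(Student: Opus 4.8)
The plan is to read the geometry off the definition of $k$-goodness and then conclude via the nestedness of the dyadic system. Recall first that $H\dot+m=H+\ell(H)m$ is the dyadic cube of the \emph{same} generation as $H$ in $\mathscr{D}^\sigma$ (so $\ell(H\dot+m)=\ell(H)=2^{-k}\ell(H^{(k)})$), while $H^{(k)}$ is the $k$-th dyadic ancestor of $H$ in the same system; both cubes thus lie in $\mathscr{D}^\sigma$. Since $k\geq 2$, $k$-goodness gives $\operatorname{dist}(H,\partial H^{(k)})\geq\tfrac14\ell(H^{(k)})=2^{k-2}\ell(H)>0$, so in fact $H\subseteq\operatorname{int}H^{(k)}$ and $\overline H\cap\partial H^{(k)}=\emptyset$. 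I would first pass to a bound at the centre $c$ of $H$: since any segment from $c$ to a point of $\partial H^{(k)}$ must cross $\partial H$, one gets
$$
  \operatorname{dist}(c,\partial H^{(k)})\ \geq\ \operatorname{dist}(c,\partial H)+\operatorname{dist}(H,\partial H^{(k)})\ \geq\ \tfrac12\ell(H)+2^{k-2}\ell(H)\ >\ 2^{k-2}\ell(H).
$$

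Next I would compare with the displacement of the centres. The centre of $H\dot+m$ is $c'=c+\ell(H)m$, and $|c'-c|=\ell(H)\,|m|\leq 2^{k-2}\ell(H)<\operatorname{dist}(c,\partial H^{(k)})$, so the segment $[c,c']$ does not meet $\partial H^{(k)}$. As $\mathbb{R}^d\setminus\partial H^{(k)}$ has only the two connected components $\operatorname{int}H^{(k)}$ and the unbounded one, and $c\in\operatorname{int}H^{(k)}$, this forces $c'\in\operatorname{int}H^{(k)}$; in particular $H\dot+m$ and $H^{(k)}$ are not disjoint.

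Finally I would invoke nestedness: $H\dot+m\cap H^{(k)}\in\{H\dot+m,H^{(k)},\emptyset\}$, where $\emptyset$ is excluded by the previous step and $H^{(k)}$ is excluded because $\ell(H\dot+m)=\ell(H)<\ell(H^{(k)})$ rules out $H^{(k)}\subseteq H\dot+m$; hence $H\dot+m\subseteq H^{(k)}$. I do not expect a real obstacle here; the only point that needs a little care is the borderline case $|m|=2^{k-2}$, for which a crude estimate at an arbitrary point of $H\dot+m$ would give only the non-strict inequality $\operatorname{dist}(\cdot,\partial H^{(k)})\geq 2^{k-2}\ell(H)$. Running the argument through the centre of $H$ contributes the extra $\tfrac12\ell(H)$ of buffer that makes the inequality strict, and so places $c'$ in the \emph{open} cube $\operatorname{int}H^{(k)}$ as required.
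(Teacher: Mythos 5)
Your proof is correct, and the key geometric idea is the same as the paper's: the $k$-goodness buffer $\operatorname{dist}(H,\partial H^{(k)})\geq 2^{k-2}\ell(H)$ is exactly enough to absorb a shift of size $|m|\ell(H)\leq 2^{k-2}\ell(H)$, and the strictness needed to handle the borderline case $|m|=2^{k-2}$ is extracted by passing to a point strictly interior to $H$. The only (small) difference is in how you close: the paper shows directly that \emph{every} interior point $y=x+m\ell(H)$ of $H\dot+m$ satisfies $\operatorname{dist}(y,(H^{(k)})^c)\geq\operatorname{dist}(x,(H^{(k)})^c)-|m|\ell(H)>0$ and so lies in $H^{(k)}$, whereas you place only the centre $c'$ inside $H^{(k)}$ and then invoke nestedness of $\mathscr{D}^\sigma$ together with the scale comparison $\ell(H\dot+m)<\ell(H^{(k)})$; both are fine, yours trades a slightly shorter pointwise estimate for an appeal to the nestedness property.
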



\begin{proof}
Let $K:=H^{(k)}$ so that $\ell(K)=2^k\ell(H)$, and $\operatorname{dist}(H,K^c)\geq\frac14\ell(K)=2^{k-2}\ell(H)$ by definition of $k$-goodness. If $x$ is any interior point of $H$, this means that $\operatorname{dist}(x,K^c)>2^{k-2}\ell(H)$. Every interior point $y$ of $H\dot+m$ has the form $y=x+m\ell(H)$ for such an $x$, and hence $\operatorname{dist}(y,K^c)\geq\operatorname{dist}(x,K^c)-\abs{m}\ell(H)>2^{k-2}\ell(H)-2^{k-2}\ell(H)=0$. Thus $y\in K$ for every interior point $y\in H\dot+m$, and hence $H\dot+m\subset K$.
\end{proof}

A combination of Propositions \ref{case1} and \ref{case2acase2b} with Lemma~\ref{lem:insertKgoodness} shows that
\begin{equation*}
\begin{split}
  \frac{\mathbb{E}}{\pi_{\good}} &\sum_{\substack{i,j \\ j\leq i}} \langle T(\chi_{ij}D_i f),\psi_{ij}D_j g\rangle
  =\frac{\mathbb{E}}{\pi_{\good}} \Big(\sum_{\substack{i,j \\ 0\leq i-j\leq r}}+ \sum_{\substack{i,j \\ j< i-r}}\Big)
      \pair{T(\phi_i D_i f)}{D_j g} \\
  &=\frac{\mathbb{E}}{\pi_{\good}}\sum_H\langle TD_H^{(r,\good)}f,P_H^{(r+1)}g\rangle \\
   &\quad+\frac{\mathbb{E}}{\pi_{\good}^2}\sum_{k=2}^\infty
          \sum_{\abs{m}\sim 2^{k-2}}\sum_{H\,k\text{-good}}
              \langle TD_H^{(r,\good)}f,P_{H\dot+m}^{(r+1)}g\rangle \\
    &\quad +\frac{\mathbb{E}}{\pi_{\good}^2}\sum_{k=2}^\infty
          \sum_{\abs{m}\sim 2^{k-2}}\sum_{H\,k\text{-good}}
              \langle TD_H^{(r,\good)}f,1_{H\dot+m}\rangle  \big(\ave{g}_{H\dot+m}-\ave{g}_H\big) \\
 &\quad     + \frac{\mathbb{E}}{\pi_{\good}}\sum_H D_H^{(r,\good)}T^*1\Big(\ave{g}_H-E_{-\infty}g\Big) \\
  &=:\mathbb{E}\omega(2^{-1})\langle R_1 f,g\rangle
    +\mathbb{E}\sum_{k=2}^\infty\omega(2^{-k}) \langle R_k f,g\rangle \\
  &\qquad +\mathbb{E}\sum_{k=2}^\infty\omega(2^{-k}) \langle Q_k f,g\rangle
    +\mathbb{E}\langle  f, \Pi_{T^*1} g\rangle,
\end{split}
\end{equation*}
where
\begin{equation*}
\begin{split}
  R_1 f &:=\frac{1}{\pi_{\good}}\frac{1}{\omega(\frac12)}\sum_H P_H^{(r+1)}TD_H^{(r,\good)}f, \\
  R_k f &:=\frac{1}{\pi_{\good}}\sum_{\abs{m}\sim 2^{k-2}}
      \sum_{H\,k\text{-good}} \frac{1}{\omega(2^{-k})}P_{H\dot+m}^{(r+1)}TD_H^{(r,\good)}f, \qquad k\geq 2,\\
  Q_k f &:=\frac{1}{\pi_{\good}^2}\sum_{\abs{m}\sim 2^{k-2}}
      \sum_{H\,k\text{-good}} \frac{\langle TD_H^{(r,\good)}f, 1_{H\dot+m}\rangle}{\omega(2^{-k})}\Big(\frac{1_{H\dot+m}}{\mu(H\dot+m)}-\frac{1_H}{\mu(H)}\Big), 
\end{split}
\end{equation*}
and
\begin{equation*}
   \Pi_{T^*1}g :=\sum_H D_H^{(r,\good)}T^*1\Big(\ave{g}_H-E_{-\infty}g\Big).
\end{equation*}
Note that we have incorporated the factor $\omega(2^{-k})$ into the definition of $R_k$ and $Q_k$ in order to achieve a favourable normalisation of the series.

For reasons of symmetry, we also have a similar decomposition of the other half of the double sum, namely
\begin{equation*}
\begin{split}
   \frac{\mathbb{E}}{\pi_{\good}} &\sum_{\substack{i,j \\ i<j}} \langle T(\chi_{ij}D_i f),\psi_{ij}D_j g\rangle
   = \frac{\mathbb{E}}{\pi_{\good}} \sum_{\substack{i,j \\ i<j}} \langle D_i f,T^*(\phi_j D_j g)\rangle \\
   &=\mathbb{E}\omega(2^{-1})\langle f, \tilde{R}_1 f\rangle
    +\mathbb{E}\sum_{k=2}^\infty\omega(2^{-k}) \langle f, \tilde{R}_k g\rangle \\
  &\qquad +\mathbb{E}\sum_{k=2}^\infty\omega(2^{-k}) \langle f, \tilde{Q}_k g\rangle
    +\mathbb{E}\langle \Pi_{T1} f, g\rangle,  
\end{split}
\end{equation*}
where $\tilde{R}_k$ and $\tilde{Q}_k$ have the same form as $R_k$ and $Q_k$, respectively, with the only difference that
\begin{itemize}
  \item $T$ is replaced by $T^*$ throughout, and
  \item $P^{(r+1)}$ in $R_k$ is replaced by $P^{(r)}$ in $\tilde{R}_k$ (cf. Remark~\ref{rem:sym-break}).
\end{itemize}
Defining $Q_1:=0$ and
\begin{equation*}
  R_{-k}:=\tilde{R}_k^*,\qquad Q_{-k}:=\tilde{Q}_k^*,
\end{equation*}
we then obtain the desired identity
\begin{equation*}
  \pair{Tf}{g}
  =\mathbb{E}\sum_{\substack{ k\in\Z \\ k\neq 0}}\omega(2^{-\abs{k}})\Big(\pair{R_k f}{g}+\pair{Q_k f}{g}\Big)
    +\mathbb{E}\Big(\pair{\Pi_{T1}f}{g}+\pair{f}{\Pi_{T^*1}g}\Big).
\end{equation*}
It remains to establish the claimed properties of the operators.

\section{Dyadic representation: estimates}\label{sec:estimates}

Having established the dyadic representation on an algebraic level, we turn to the relevant estimates for the various terms in the obtained expansion. Since the operators $R_k$ and $Q_k$ are essentially similar for positive and negative values of $k$, we only explicitly deal with $k\geq 1$.

\subsection{The paraproducts}

Our dyadic decomposition of the operator $T$ led to two dyadic paraproducts of the form
\begin{equation*}
\begin{split}
  \Pi_b f &=\sum_{H\in\mathscr{D}} D_H^{(r,\good)}b\Big(\ave{f}_H-E_{-\infty}f\Big),\qquad b\in\{T1,T^*1\}, \\
    &=\sum_{H\in\mathscr{D}}\Big(\sum_{\substack{I:I^{(r)}=H \\ \operatorname{dist}(I,H^c)\geq\frac14\ell(H)}}D_I b\Big)\ave{f}_H
       -\Big(\sum_{H\in\mathscr{D}}D_H^{(r,\good)}b\Big)E_{-\infty}f \\
    &=:\Pi_b^1 f-\Pi_b^2 f.
\end{split}
\end{equation*}
These are a standard part of our decomposition, which have been studied in exactly the same form in \cite{NTV}, where the following result is proven:

\begin{proposition}[\cite{NTV}, Theorem 7.1]\label{proofcase2b}
For any $\lambda>1$, we have
\begin{equation*}
  \Norm{\Pi_b^1 f}{2}\leq C\Norm{b}{BMO^2_\lambda(\mu)}\Norm{f}{2},
\end{equation*}
where, for $p\in[1,\infty)$,
\begin{equation*}
  \Norm{b}{BMO^p_\lambda(\mu)}
  :=\sup_Q \inf_{a\in\C} \Big(\frac{1}{\mu(\lambda Q)}\int_Q|f-a|^pd\mu\Big)^{1/p},
\end{equation*}
and the supremum is over all cubes $Q$ in $\R^d$ (including $Q=\lambda Q=\R^d$ if $\mu(\R^d)<\infty$).
\end{proposition}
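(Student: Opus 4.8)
The plan is to reduce the bound to a Carleson embedding inequality; apart from that one point, the argument is pure orthogonality and telescoping.

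\emph{Orthogonalisation.} First observe that the functions $\{D_H^{(r,\good)}b\}_{H\in\mathscr{D}}$ are pairwise orthogonal in $L^2(\mu)$: each $D_H^{(r,\good)}b$ lies in the linear span of the Haar functions $\varphi_I^i$ with $I^{(r)}=H$ and $I$ being $r$-good, and for $H\neq H'$ any such Haar function for $H$ is either supported off the support of any such Haar function for $H'$, or is constant on it (and the Haar functions have vanishing $\mu$-integral); in either case the inner product is zero. Consequently
\begin{equation*}
  \Norm{\Pi_b^1 f}{2}^2
  =\BNorm{\sum_{H}\ave{f}_H\,D_H^{(r,\good)}b}{2}^2
  =\sum_H\abs{\ave{f}_H}^2\,\alpha_H,\qquad \alpha_H:=\Norm{D_H^{(r,\good)}b}{2}^2,
\end{equation*}
and it remains to prove the Carleson-type inequality $\sum_H\abs{\ave{f}_H}^2\alpha_H\le C\Norm{b}{BMO^2_\lambda(\mu)}^2\Norm{f}{2}^2$.

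\emph{The packing condition.} Next I would check that $\{\alpha_H\}$ is a Carleson sequence against the inflated masses $\mu(\lambda\,\cdot\,)$. Since the $\good$-truncation only discards terms, and since $H\subseteq Q$ together with $I^{(r)}=H$ forces $I\subseteq Q$, we get for every dyadic $Q$
\begin{equation*}
  \sum_{H\in\mathscr{D}:\,H\subseteq Q}\alpha_H
  \le\sum_{I\in\mathscr{D}:\,I\subseteq Q}\Norm{D_I b}{2}^2
  =\int_Q\abs{b-\ave{b}_Q}^2\,d\mu,
\end{equation*}
the last equality being the martingale telescoping identity $\sum_{I\subseteq Q}D_I b=(b-\ave{b}_Q)1_Q$ (legitimate in the finitary set-up) together with the pairwise orthogonality of the $D_I b$, $I\subseteq Q$. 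As $\ave{b}_Q$ minimises $a\mapsto\int_Q\abs{b-a}^2 d\mu$, the right-hand side is $\le\inf_{a\in\C}\int_Q\abs{b-a}^2 d\mu\le\mu(\lambda Q)\Norm{b}{BMO^2_\lambda(\mu)}^2$. Hence $\sum_{H\subseteq Q}\alpha_H\le\Norm{b}{BMO^2_\lambda(\mu)}^2\,\mu(\lambda Q)$ for all $Q$.

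\emph{Carleson embedding.} It then suffices to invoke the Carleson embedding theorem for power-bounded (non-doubling) measures: if $\sum_{H\subseteq Q}\alpha_H\le A\,\mu(\lambda Q)$ for all dyadic $Q$ and $\mu$ satisfies the growth bound of order $n$, then $\sum_H\abs{\ave{f}_H}^2\alpha_H\le C(A,\lambda,n,d)\Norm{f}{2}^2$. The route I have in mind is a corona (stopping-time) decomposition: pick stopping cubes $\mathcal{F}$ along which $\ave{\abs{f}}_F$ at least doubles, dominate $\ave{\abs{f}}_H\le 2\ave{\abs{f}}_{\pi H}$ where $\pi H\in\mathcal{F}$ is the minimal stopping cube over $H$, and reduce to $\sum_H\abs{\ave{f}_H}^2\alpha_H\le 4A\sum_{F\in\mathcal{F}}\ave{\abs{f}}_F^2\,\mu(\lambda F)$; one then absorbs the dilated masses $\mu(\lambda F)$ using the growth condition (the $\mathcal{F}$-cubes of each fixed side length being pairwise disjoint, their $\lambda$-dilates have bounded overlap) together with the $L^2(\mu)$-boundedness, with a constant independent of $\mu$, of the dyadic maximal operator. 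This step is exactly the content of Theorem~7.1 of \cite{NTV} in the present generality, so in practice all that has to be verified is that $\Pi_b^1$ is literally of the form treated there, which it is by construction.

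I expect the main obstacle to be this final step. The packing condition is stated with $\mu(\lambda Q)$ rather than $\mu(Q)$, and in a merely power-bounded, non-doubling setting the two are genuinely different — dilated cubes interact badly with the dyadic grid, which is precisely the phenomenon that goodness was designed to circumvent. Converting the $\mu(\lambda Q)$-packing into an honest $L^2(\mu)$ bound is where the growth hypothesis must be spent, and it is the only point that is not purely formal; everything else (the orthogonality and telescoping above, and the companion term $\Pi_b^2 f=\big(\sum_H D_H^{(r,\good)}b\big)E_{-\infty}f$, which is trivial since $\abs{E_{-\infty}f}\le\mu(\R^d)^{-1/2}\Norm{f}{2}$ and $\Norm{\sum_H D_H^{(r,\good)}b}{2}\le\mu(\R^d)^{1/2}\Norm{b}{BMO^2_\lambda(\mu)}$ by the $Q=\R^d$ case of the packing bound) is routine.
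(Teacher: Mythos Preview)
The paper does not actually prove this proposition; it merely attributes it to \cite{NTV}, Theorem~7.1, so there is no proof to compare against. Your sketch is correct and is the standard route: orthogonality of the $D_H^{(r,\good)}b$ reduces the estimate to a Carleson embedding, the packing condition $\sum_{H\subseteq Q}\alpha_H\le\Norm{b}{BMO^2_\lambda(\mu)}^2\mu(\lambda Q)$ follows from Bessel, and the non-homogeneous Carleson embedding with the dilated mass $\mu(\lambda Q)$ is precisely what \cite{NTV} supplies. One small correction: your displayed telescoping \emph{equality} $\sum_{I\subseteq Q}\Norm{D_Ib}{2}^2=\int_Q\abs{b-\ave{b}_Q}^2d\mu$ should be the Bessel \emph{inequality} $\le$, since $b\in\{T1,T^*1\}$ is not in the finitary class of Section~\ref{sec:finitary} and the Haar system need not be complete in $L^2(Q,\mu)$ for a non-doubling $\mu$; but the inequality is all that is required.
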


For the part $\Pi_b^2$, the same estimate is easy:

\begin{lemma}
\begin{equation*}
   \Norm{\Pi_b^2 f}{2}\leq C\Norm{b}{BMO_\lambda^2(\mu)}\Norm{f}{2}.
\end{equation*}
\end{lemma}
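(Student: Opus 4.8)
The plan is to exploit the simple structure of $\Pi_b^2 f = \big(\sum_{H\in\mathscr{D}} D_H^{(r,\good)}b\big) E_{-\infty}f$, which is the product of a fixed function $\beta := \sum_{H\in\mathscr{D}} D_H^{(r,\good)}b$ with the scalar $E_{-\infty}f$. The estimate is only nontrivial when $\mu(\R^d) < \infty$, since otherwise $E_{-\infty}f = 0$ and $\Pi_b^2 f \equiv 0$; so we may assume $\mu(\R^d) < \infty$ throughout. In that case $|E_{-\infty}f| = \mu(\R^d)^{-1}|\int f\,d\mu| \leq \mu(\R^d)^{-1/2}\Norm{f}{2}$ by Cauchy--Schwarz, so it suffices to show $\Norm{\beta}{2} \leq C\mu(\R^d)^{1/2}\Norm{b}{BMO_\lambda^2(\mu)}$.

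First I would observe that $\beta = \sum_{H} D_H^{(r,\good)} b$ is a sum over disjoint blocks of martingale differences: the functions $D_H^{(r,\good)}b$ for distinct $H$ at the same scale are supported on disjoint cubes, and for $H$ at different scales they live in orthogonal ranges of the $D_I$'s (each $D_H^{(r,\good)}b = \sum_{I: I^{(r)}=H,\, I\ r\text{-good}} D_I b$, and the index sets $\{I : I^{(r)} = H\}$ are disjoint as $H$ ranges over $\mathscr{D}$). Hence by orthogonality of the Haar/martingale system, $\Norm{\beta}{2}^2 = \sum_H \Norm{D_H^{(r,\good)}b}{2}^2 \leq \sum_{I \in\mathscr{D}} \Norm{D_I b}{2}^2$. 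Now I would bound the right-hand side by a BMO-type square function estimate: for a function $b \in BMO_\lambda^2(\mu)$ and any fixed top cube $Q$, one has $\sum_{I \subseteq Q} \Norm{D_I b}{2}^2 \leq C\mu(\lambda Q)\Norm{b}{BMO_\lambda^2(\mu)}^2$, which is a standard John--Nirenberg/Carleson-type fact for non-homogeneous martingale differences (and is in any case implicit in the machinery of \cite{NTV} used for Proposition~\ref{proofcase2b}). Taking $Q = \R^d$ (legitimate since $\mu(\R^d) < \infty$, and $\lambda Q = \R^d$), we get $\sum_{I\in\mathscr{D}} \Norm{D_I b}{2}^2 \leq C\mu(\R^d)\Norm{b}{BMO_\lambda^2(\mu)}^2$, which is exactly the bound on $\Norm{\beta}{2}$ we wanted.

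Combining the two pieces, $\Norm{\Pi_b^2 f}{2} = \Norm{\beta}{2}\,|E_{-\infty}f| \leq C\mu(\R^d)^{1/2}\Norm{b}{BMO_\lambda^2(\mu)} \cdot \mu(\R^d)^{-1/2}\Norm{f}{2} = C\Norm{b}{BMO_\lambda^2(\mu)}\Norm{f}{2}$, as claimed. The only mild subtlety — the "main obstacle," though it is minor — is making sure the square-function bound $\sum_I \Norm{D_I b}{2}^2 \lesssim \mu(\R^d)\Norm{b}{BMO_\lambda^2}^2$ is genuinely available at the level of generality of a (possibly non-doubling) measure $\mu$ of order $n$ with the enlarged cubes $\lambda Q$; this is precisely the kind of estimate underlying the proof of Proposition~\ref{proofcase2b} in \cite{NTV}, so one can either cite it directly or note that $\Pi_b^2$ is dominated in norm by $\Pi_b^1$ applied to the constant function, up to the harmless normalization by $\mu(\R^d)$. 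One should also recall, to justify that $b = T1, T^*1 \in BMO_\lambda^2(\mu)$ in the first place, that this membership follows from the local $T(1)$ hypotheses \eqref{eq:localT1}, as established in \cite{NTV}; but that is part of the ambient setup rather than something to be reproven here.
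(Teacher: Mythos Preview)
Your proof is correct, but the paper's argument is more direct and avoids the Carleson/square-function estimate that you flag as the ``main obstacle.'' The paper simply observes that $D_H^{(r,\good)}a=0$ for every constant $a$, so one may replace $b$ by $b-a$ inside $\beta$; then the orthogonality you already used gives the trivial $L^2$-contraction
\[
  \Norm{\beta}{2}=\BNorm{\sum_H D_H^{(r,\good)}(b-a)}{2}\leq\Norm{b-a}{2},
\]
since $\sum_H D_H^{(r,\good)}$ is an orthogonal projection. Taking $Q=\R^d$ in the definition of $\Norm{\cdot}{BMO_\lambda^2(\mu)}$ (which is allowed precisely because $\mu(\R^d)<\infty$) and choosing $a$ optimally then gives $\Norm{b-a}{2}\leq\mu(\R^d)^{1/2}\Norm{b}{BMO_\lambda^2(\mu)}$ directly, with no appeal to \cite{NTV} or to any John--Nirenberg machinery. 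Your route recovers the same bound by first passing to $\sum_I\Norm{D_I b}{2}^2$ and then invoking a Carleson-type estimate; this works, but it imports exactly the heavy ingredient that the lemma is meant to sidestep (that ingredient being reserved for $\Pi_b^1$ via Proposition~\ref{proofcase2b}).
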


\begin{proof}
Note that this term is only nonzero if $\mu(\R^d)<\infty$, and in this case, noting that $D_H^{(r,\good)}a=0$ for any constant $a$,
\begin{equation*}
\begin{split}
  \Norm{\Pi^2_b f}{2}
  &=\BNorm{\sum_H D_H^{(r,\good)}(b-a)}{2}\abs{E_{-\infty}f} \\
  &\leq\Norm{b-a}{2}\cdot\mu(\R^d)^{-1/2}\Norm{f}{2}
  \leq\Norm{b}{BMO_\lambda^2(\mu)}\Norm{f}{2}
\end{split}
\end{equation*}
by a suitable choice of $a$.
\end{proof}

Thus, all that remains is to check the BMO conditions on $b\in\{T1,T^*1\}$ under our assumptions on the operator $T$. This is also reasonably standard, and contained in the following:

\begin{proposition}\label{bmolambda}
Under the assumptions of Theorem~\ref{thm:newRepr} and the standard Dini condition \eqref{eq:logDini} with $\alpha=0$, we have
 $T1\in BMO_{\lambda}^2$ and $T^*1\in BMO_{\lambda}^2$ for any $\lambda>1$.
\end{proposition}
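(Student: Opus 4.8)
The plan is to verify the $BMO^2_\lambda(\mu)$ condition for $b = T1$ directly from the definition, fixing an arbitrary cube $Q$ and choosing the competitor constant $a$ to be (roughly) the average over $Q$ of $T$ applied to the \emph{far} part of $1$. Concretely, I would split $1 = 1_{\lambda Q} + 1_{(\lambda Q)^c}$ and write $T1 = T(1_{\lambda Q}) + T(1_{(\lambda Q)^c})$ on $Q$. For the local part, the $T(1)$ testing hypothesis \eqref{eq:localT1} applied to the cube $\lambda Q$ gives $\Norm{T(1_{\lambda Q})}{L^2(\mu)} \leq C\mu(\lambda Q)^{1/2}$, hence
\begin{equation*}
  \Big(\int_Q \abs{T(1_{\lambda Q})}^2\,d\mu\Big)^{1/2}
  \leq C\mu(\lambda Q)^{1/2},
\end{equation*}
which is exactly the bound the $BMO^2_\lambda$ norm allows (with $a=0$ for this piece). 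For the far part, I would show that $x \mapsto T(1_{(\lambda Q)^c})(x)$ is nearly constant on $Q$: pick $a := T(1_{(\lambda Q)^c})(x_Q)$ for the center $x_Q$ of $Q$ (or an average of such values), and estimate
\begin{equation*}
  \abs{T(1_{(\lambda Q)^c})(x) - a}
  = \Babs{\int_{(\lambda Q)^c} \big(K(x,y)-K(x_Q,y)\big)\,d\mu(y)}
  \leq C\int_{(\lambda Q)^c} \frac{1}{\abs{x_Q-y}^n}\,\omega\Big(\frac{\abs{x-x_Q}}{\abs{x_Q-y}}\Big)\,d\mu(y)
\end{equation*}
for $x\in Q$, using the kernel smoothness estimate (valid since $\abs{x-x_Q}\leq\tfrac12\abs{x_Q-y}$ once $\lambda$ is fixed $>1$ and $y\notin\lambda Q$).

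The heart of the matter is then a standard annular decomposition: split $(\lambda Q)^c$ into dyadic annuli $2^{k}\lambda Q \setminus 2^{k-1}\lambda Q$, $k\geq 1$, on which $\abs{x_Q - y}\sim 2^k\ell(Q)$ and $\abs{x-x_Q}/\abs{x_Q-y}\lesssim 2^{-k}$, so that $\omega(\abs{x-x_Q}/\abs{x_Q-y})\leq \omega(C2^{-k})$ by monotonicity of $\omega$. Using the growth condition $\mu(B(x,r))\leq Cr^n$ to bound $\mu(2^k\lambda Q)\leq C(2^k\ell(Q))^n$, each annulus contributes at most $C\omega(C2^{-k})$, and summing in $k$ gives
\begin{equation*}
  \sup_{x\in Q}\abs{T(1_{(\lambda Q)^c})(x) - a}
  \leq C\sum_{k=1}^\infty \omega(C2^{-k})
  \leq C\int_0^1 \omega(t)\,\frac{dt}{t} < \infty
\end{equation*}
by the Dini condition \eqref{eq:logDini} with $\alpha=0$. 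Combining the two parts and taking square means over $Q$ against $\mu(\lambda Q)$ yields $\Norm{T1}{BMO^2_\lambda(\mu)}\leq C$; the argument for $T^*1$ is identical with the roles of the two kernel variables swapped (the smoothness hypothesis is symmetric, and $T^*$ satisfies the analogous testing bound).

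The main obstacle—really the only point requiring care rather than bookkeeping—is the justification that the far-part integral representation actually computes $T(1_{(\lambda Q)^c})(x)$ as a convergent integral and that $T1$ is well-defined modulo constants in the first place. In the non-homogeneous setting one cannot simply invoke $T$ acting on $L^\infty$; instead $T1$ should be interpreted in the usual distributional/duality sense (pairing against mean-zero bumps supported in cubes), and one must check that the annular series above converges absolutely so that the pointwise difference $T(1_{(\lambda Q)^c})(x)-T(1_{(\lambda Q)^c})(x')$ is genuinely given by the kernel integral. This is handled exactly as in the classical Calderón--Zygmund theory and in \cite{NTV}; once the interpretation is pinned down, the estimates above go through verbatim. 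I would remark that for the paraproduct application only $T1 \in BMO^2_\lambda$ modulo constants matters, which is precisely what the above establishes, so no stronger a priori meaning for $T1$ is needed.
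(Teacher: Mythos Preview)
Your approach coincides with the paper's, but there is one small genuine gap in the far-part estimate. You assert that the kernel regularity hypothesis $|x-x_Q|\leq\tfrac12|x_Q-y|$ holds for all $x\in Q$ and $y\notin\lambda Q$ as soon as $\lambda>1$; however, $|x-x_Q|\leq\tfrac12\ell(Q)$ while $|y-x_Q|>\tfrac{\lambda}{2}\ell(Q)$, so the ratio is only guaranteed to be $<1/\lambda$, and the smoothness estimate is available only for $\lambda\geq 2$. For $\lambda\in(1,2)$ the innermost part of $(\lambda Q)^c$ does not fall under the regularity hypothesis, so your annular sum is not justified there as written.

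The paper repairs exactly this point by inserting an intermediate zone: one sets $\tau:=\max\{2,\lambda\}$ and $a_Q:=T1_{(\tau Q)^c}(x_Q)$, runs the regularity-plus-annuli argument on $(\tau Q)^c$ (where $|x-x_Q|\leq\tfrac12|y-x|$ is legitimate), and controls the leftover piece $T1_{2Q\setminus\lambda Q}$ on $Q$ via the \emph{size} bound $|K(x,y)|\leq C|x-y|^{-n}$, obtaining $|T1_{2Q\setminus\lambda Q}(x)|\leq C(\lambda-1)^{-n}$ for $x\in Q$. This $\lambda$-dependent constant is harmless since $\lambda>1$ is fixed. With that single adjustment your proof is complete and is essentially the paper's argument.
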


\begin{proof}
We are going to prove that $T1\in BMO_{\lambda}^2$  and the case of the adjoint is similar.
Fix a cube $Q$, and some $\lambda>1$. We denote by $x_Q$ the centre of the cube $Q$.

Note that we have $|x-x_Q|<\frac12\ell(Q)\leq\frac12|y-x|$ for $x\in Q$ and $y\in(2 Q)^c$ (using the $\ell^\infty$ metric on $\R^d$ for convenience.) So for $x\in Q$ and $\tau\geq 2$ we have that
\begin{align*}
|T1_{(\tau Q)^c}(x)-T1_{(\tau Q)^c}(x_Q)|&\leq\int_{(\tau Q)^c}|K(x,y)-K(x_Q,y)|d\mu(y)\\
&\leq C\int_{(2 Q)^c}\omega\Big(\frac{|x-x_Q|}{|y-x_Q|}\Big)\frac{1}{|y-x_Q|^n}d\mu(y)\\
&\leq C\sum_{k=1}^{\infty}\int_{2^{k+1}Q\setminus 2^kQ}\omega\Big(\frac{2^{-1}\ell(Q)}{2^{k-1}\ell(Q)}\Big)\frac{1}{(2^{k-1}\ell(Q))^n}d\mu(y)\\
&\leq C\sum_{k=1}^{\infty}\omega(2^{-k})\frac{\mu(2^{k+1}Q)}{(2^{k-1}\ell(Q))^n} \\
&\leq C \sum_{k=1}^{\infty}\omega(2^{-k})
 \leq C\int_0^1\omega(t)\frac{dt}{t}.
\end{align*}
Additionally, if $x\in Q$ and $\lambda\in(1,2)$
\begin{align*}
|T1_{2Q\setminus\lambda Q}(x)|&\leq \int_{2Q\setminus\lambda Q}|K(x,y)|d\mu(y)\\
&\leq C\int_{2Q\setminus\lambda Q}\frac{1}{|x-y|^n}d\mu(y)\\
&\leq C\int_{2Q\setminus\lambda Q}\frac{1}{(\lambda-1)^n\ell(Q)^n}d\mu(y)\\
&\leq C\frac{1}{(\lambda-1)^n}.
\end{align*}

Set $\tau=\max\{2,\lambda\}$ and $a_Q=T1_{(\tau Q)^c}(x_Q)$. Then
\begin{align*}
\int_Q &|T1(x)-a_Q|^2d\mu(x) \\
&\leq C\int_{\mathbb{R}^d}|T1_{\lambda Q}(x)|^2d\mu(x)+C\int_Q|T1_{\tau Q\setminus\lambda Q}(x)|^2d\mu(x)\\
&\phantom{\leq C\int_{\mathbb{R}^d}|T1_{\lambda Q}}+C\int_Q|T1_{(\tau Q)^c}(x)-T1_{(\tau Q)^c}(x_Q)|^2d\mu(x)\\
&\leq C\mu(\lambda Q)+C\frac{1}{(\lambda-1)^n}\mu(Q)+C\Big(\sum_{k=0}^{\infty}\omega(2^{-k})\Big)^2\mu(Q)\\
&\leq C\mu(\lambda Q),
\end{align*}
where the middle term of the decomposition is equal to zero for $\lambda\geq 2$.
\end{proof}

\subsection{The operator $R_1$}

We have
\begin{equation*}
  R_1=\sum_H B_H^{(1)},
\end{equation*}
where the operator $B_H^{(1)}$ given by
\begin{equation}\label{eq:BH1}
\begin{split}
  \pi_{\good}\omega(\tfrac12)B_H^{(1)}f &:=P_H^{(r+1)}TD_H^{(r,\good)}f
  =\sum_{\substack{ I\,r\text{-good} \\ I^{(r)}=H}}\sum_{J:J\subseteq H\subseteq J^{(r)}}
     D_J TD_If \\
  &=\sum_{\substack{ I\,r\text{-good} \\ I^{(r)}=H}}\sum_{J:J\subseteq H\subseteq J^{(r)}}\sum_{i,j}
     \pair{ T\varphi_I^i }{ \varphi_J^j} \pair{\varphi_I^i}{f}\varphi_J^j.
\end{split}
\end{equation}

\begin{lemma}
\begin{equation*}
  \Norm{T\varphi_I^i}{2}\leq C.
\end{equation*}
\end{lemma}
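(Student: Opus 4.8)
The plan is to reduce the estimate directly to the local $T(1)$ hypothesis \eqref{eq:localT1}, avoiding any attempt to control $\Norm{\varphi_I^i}{\infty}$ — which, unlike in the doubling case, need not be comparable to $\mu(I)^{-1/2}$. First I would note that, by properties (1)--(2) of the Haar functions, $\varphi_I^i$ is supported on $I$ and constant on each child $I'\in\operatorname{ch}(I)$, hence lies in the linear span of $\{1_{I'}:I'\in\operatorname{ch}(I),\ \mu(I')>0\}$. Rewriting this span in terms of the $L^2(\mu)$-normalised indicators $e_{I'}:=\mu(I')^{-1/2}1_{I'}$, which form an orthonormal family, we may expand $\varphi_I^i=\sum_{I'\in\operatorname{ch}(I)}d_{I'}e_{I'}$ with $d_{I'}=\pair{\varphi_I^i}{e_{I'}}$; since $\varphi_I^i$ lies in this span, Parseval together with property (5) gives $\sum_{I'}\abs{d_{I'}}^2=\Norm{\varphi_I^i}{2}^2\leq 1$.

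Next I would use linearity of $T$ on this finite-dimensional space together with the triangle inequality:
\begin{equation*}
  \Norm{T\varphi_I^i}{2}\leq\sum_{I'\in\operatorname{ch}(I)}\abs{d_{I'}}\,\Norm{Te_{I'}}{2}
  =\sum_{I'\in\operatorname{ch}(I)}\abs{d_{I'}}\,\frac{\Norm{T1_{I'}}{2}}{\mu(I')^{1/2}}.
\end{equation*}
Since each child $I'$ is a cube, the local $T(1)$ condition \eqref{eq:localT1} yields $\Norm{T1_{I'}}{2}\leq C\mu(I')^{1/2}$, so each factor $\Norm{Te_{I'}}{2}$ is bounded by the absolute constant $C$. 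Finally, since $\operatorname{ch}(I)$ has at most $2^d$ elements, Cauchy--Schwarz gives $\sum_{I'}\abs{d_{I'}}\leq 2^{d/2}\big(\sum_{I'}\abs{d_{I'}}^2\big)^{1/2}\leq 2^{d/2}$, and hence $\Norm{T\varphi_I^i}{2}\leq 2^{d/2}C$, which is the claim (recall $d$ is fixed).

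There is essentially no obstacle here; the only point worth flagging is the temptation to estimate each $\Norm{T1_{I'}}{2}$ directly via the size and regularity bounds on $K$, which is both unnecessary and, for the contribution on and around $I$, not obviously sufficient without the $T(1)$ hypothesis. Alternatively one could split $\R^d=2I\cup(2I)^c$, treating the far part by means of $\int\varphi_I^i\,d\mu=0$, the kernel regularity, and the growth bound on $\mu$ (exactly as in the proof of Proposition~\ref{bmolambda}), and the near part via $\Norm{T1_{I'}}{2}\leq C\mu(I')^{1/2}$; but the direct argument above is cleaner and renders the far/near decomposition superfluous.
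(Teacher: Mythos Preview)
Your proof is correct and is essentially identical to the paper's: the paper writes $\varphi_I^i=\sum_{I'\in\operatorname{ch}(I)}\ave{\varphi_I^i}_{I'}1_{I'}$, applies the triangle inequality and the testing condition $\Norm{T1_{I'}}{2}\leq C\mu(I')^{1/2}$, and finishes with Cauchy--Schwarz over the $2^d$ children. Your coefficients $d_{I'}=\mu(I')^{1/2}\ave{\varphi_I^i}_{I'}$ are just the $L^2$-normalised version of the same expansion, so the two arguments coincide line by line.
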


\begin{proof}
Since $\varphi_I^i$ takes a constant value $\ave{\varphi_I^i}_{I'}$ on each $I'\in\operatorname{ch}(I)$, it follows from the testing condition $\Norm{T1_I}{2}\leq C\mu(I)^{1/2}$ that
\begin{equation*}
\begin{split}
  \Norm{T\varphi_I^i}{2}
  &=\BNorm{T\sum_{I'\in\operatorname{ch}(I)}\ave{\varphi_I^i}_{I'}1_{I'}}{2}
  \le\sum_{I'\in\operatorname{ch}(I)}\abs{\ave{\varphi_I^i}_{I'}}\Norm{T1_{I'}}{2} \\
  &\le C\sum_{I'\in\operatorname{ch}(I)}\abs{\ave{\varphi_I^i}_{I'}}\mu(I')^{1/2} 
    \le C2^{d/2}\Big(\sum_{I'\in\operatorname{ch}(I)}\abs{\ave{\varphi_I^i}_{I'}}^2\mu(I')\Big)^{1/2} \\
  &\leq C\Norm{\varphi_I^i}{2}=C,
\end{split}
\end{equation*}
where we incorporated the dimensional factor $2^{d/2}$ into $C$.
\end{proof}

It follows that
\begin{equation*}
\begin{split}
  \abs{\pair{B_H^{(1)}f}{g}}
  &\leq C\sum_{\substack{ I\,r\text{-good} \\ I^{(r)}=H}}\sum_{J:J\subseteq H\subseteq J^{(r)}}\sum_{i,j}
  \abs{\pair{\varphi_I^i}{f}}\abs{\pair{\varphi_J^j}{g}} \\
  &\leq C\Big(\sum_{I:I^{(r)}=H}\sum_i\abs{\pair{\varphi_I^i}{f}}^2\Big)^{1/2}
  \Big(\sum_{J:J\subseteq H\subseteq J^{(r)}}\sum_j\abs{\pair{\varphi_I^i}{f}}^2\Big)^{1/2} \\
  &=C\Norm{D_H^{(r)}f}{2}\Norm{P_H^{(r+1)}g}{2}
\end{split}
\end{equation*}
using Caucy--Schwarz and the fact that the total number of terms is bounded by a dimensional constant.
Hence
\begin{equation*}
\begin{split}
  \abs{\pair{R_1f}{g}}
  &\leq\sum_H\abs{\pair{B_H^{(1)}f}{g}} \\
  &\leq C\Big(\sum_H\Norm{D_H^{(r)}f}{2}^2\Big)^{1/2}\Big(\sum_H\Norm{P_H^{(r+1)}g}{2}\Big)^{1/2} \\
  & \leq C\Norm{f}{2}\sqrt{r+1}\Norm{g}{2}\leq C\Norm{f}{2}\Norm{g}{2},
\end{split}
\end{equation*}
incorporating the fixed constant $\sqrt{1+r}$ into $C$,

From \eqref{eq:BH1}, the kernel $b_H^{(1)}$ of $B_H^{(1)}$ is given by
\begin{equation*}
  b_H^{(1)}(x,y)
  =\frac{1}{\pi_{\good}\omega(\tfrac12)}\sum_{\substack{ I\,r\text{-good} \\ I^{(r)}=H}}\sum_{J:J\subseteq H\subseteq J^{(r)}}\sum_{i,j}
     \pair{ T\varphi_I^i }{ \varphi_J^j} \varphi_I^i(y)\varphi_J^j(x).
\end{equation*}
In general, there are no good pointwise bounds for this expression; however, if $\mu$ is a doubling measure, then
\begin{equation*}
  \abs{\varphi_I^i(y)}
  \leq\frac{C}{\mu(I)^{1/2}}1_I(y)\leq\frac{C}{\mu(H)^{1/2}}1_I(y)
\end{equation*}
with a similar bound for $\abs{\varphi_J^j(x)}$. Thus
\begin{equation*}
\begin{split}
  \abs{b^{(1)}_H(x,y)}
  &\leq C\sum_{\substack{ I\,r\text{-good} \\ I^{(r)}=H}} \frac{1_I(y)}{\mu(H)^{1/2}}\sum_{J:J\subseteq H\subseteq J^{(r)}} \frac{1_J(x)}{\mu(H)^{1/2}}
      \sum_{i,j}1 \\
  &\leq \frac{C}{\mu(H)}1_H(x)1_H(y).
\end{split}
\end{equation*}

\subsection{The operators $R_k$, $k\geq 2$}

For the analysis of $R_k$ (as well as $Q_k$ below), it is convenient to take as the new summation variable $K:=H^{(k)}$, which is a common ancestor of both $H$ and $H\dot+m$ for $k$-good $H$ and $\abs{m}\sim 2^{k-2}$. This leads to the decomposition
\begin{equation*}
\begin{split}
  R_k f
  &=\sum_K\frac{1}{\pi_{\good}}\sum_{\abs{m}\sim 2^{k-2}}\sum_{\substack{H\,k\text{-good} \\ H^{(k)}=K }}
        \frac{1}{\omega(2^{-k})}P_{H\dot+m}^{(r+1)}TD_H^{(r,\good)}f
        =:\sum_K B^{(k)}_K f,
\end{split}
\end{equation*}
where
\begin{equation*}
  B_K^{(k)}
  =\frac{1}{\pi_{\good}}\sum_{\abs{m}\sim 2^{k-2}}\sum_{\substack{H\,k\text{-good} \\ H^{(k)}=K }}
  \sum_{\substack{I\,r\text{-good} \\ I^{(r)}=H}}\sum_{\substack{J:J\subseteq H\dot+m \\ \ \subseteq J^{(r)}}}
        \frac{1}{\omega(2^{-k})}D_JTD_I
\end{equation*}
has kernel
\begin{equation*}
  b_K^{(k)}(x,y)
  =\frac{1}{\pi_{\good}}\sum_{\abs{m}\sim 2^{k-2}}\sum_{\substack{H\,k\text{-good} \\ H^{(k)}=K }}
  \sum_{\substack{I\,r\text{-good} \\ I^{(r)}=H}}\sum_{\substack{J:J\subseteq H\dot+m \\ \ \subseteq J^{(r)}}}\sum_{i,j}
        \frac{\pair{T\varphi_I^i}{\varphi_J^j}}{\omega(2^{-k})}\varphi_I^i(y)\varphi_J^j(x).
\end{equation*}

\begin{lemma}\label{lem:RkKernel1}
In the sum above, we have
\begin{equation*}
  \Babs{\frac{\pair{T\varphi_I^i}{\varphi_J^j}}{\omega(2^{-k})}\varphi_I^i(y)\varphi_J^j(x)}
  \leq\frac{C}{\ell(K)^n}1_I(y)1_J(x).
\end{equation*}
\end{lemma}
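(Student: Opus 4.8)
The key observation is that for $k\ge 2$ the cubes $I\subseteq H$ and $J\subseteq H\dot+m$ are disjoint (since $m\neq 0$), so the bound in question is an off-diagonal estimate, controlled purely by the size and H\"older estimates on the kernel, with no appeal to the testing conditions. The summand is nonzero only if $y\in I$ and $x\in J$, so the factor $1_I(y)1_J(x)$ is automatic. Since $x\in\supp\varphi_J^j\subseteq J$ lies off $\supp\varphi_I^i\subseteq I$, the kernel representation $T\varphi_I^i(x)=\int K(x,z)\varphi_I^i(z)\,d\mu(z)$ is valid for $x\in\supp\varphi_J^j$, and, using $\int\varphi_I^i\,d\mu=0$,
\[
  \pair{T\varphi_I^i}{\varphi_J^j}
  =\int\!\!\int\bigl(K(x,z)-K(x,c_I)\bigr)\varphi_I^i(z)\,\varphi_J^j(x)\,d\mu(z)\,d\mu(x),
\]
with $c_I$ the centre of $I$. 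So everything reduces to a pointwise estimate on $K(x,z)-K(x,c_I)$ for $z\in I$, $x\in J$.

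The geometric input I would establish is that $|x-z|\sim\ell(K)$ for all such $x,z$, with dimensional implied constants. With $K=H^{(k)}$ one has $\ell(K)=2^k\ell(H)$ and $\ell(I)=2^{-r}\ell(H)$; since $H\dot+m\subseteq K$ by the lemma just proved and also $I\subseteq H\subseteq K$, we get $|x-z|\le\diam(K)\le C_d\,\ell(K)$. For the lower bound, the $r$-goodness of $I$ gives $\dist(I,H^c)\ge\tfrac14\ell(H)$, and as $H\dot+m$ is disjoint from $H$ this forces $\dist(I,J)\ge\dist(I,H^c)\ge\tfrac14\ell(H)$; moreover, when $k$ is large, $|m|\sim 2^{k-2}$ makes $H\dot+m$ lie at distance $\sim|m|\ell(H)\sim\ell(K)$ from $H$. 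Checking the finitely many small values of $k$ (where $\ell(H)\sim\ell(K)$ in any case) then yields $|x-z|\ge c_d\,\ell(K)$. Consequently $|z-c_I|\le\tfrac12\ell(I)=\tfrac12\,2^{-r-k}\ell(K)\le\tfrac12|x-z|$ for $r\ge 2$, so the H\"older estimate on $K$ applies in the second variable, and with $|x-z|\sim\ell(K)$, the monotonicity of $\omega$, and its subadditivity,
\[
  \bigl|K(x,z)-K(x,c_I)\bigr|
  \le\frac{C}{|x-z|^n}\,\omega\!\Bigl(\frac{|z-c_I|}{|x-z|}\Bigr)
  \le\frac{C}{\ell(K)^n}\,\omega\bigl(C\,2^{-r-k}\bigr)
  \le\frac{C}{\ell(K)^n}\,\omega(2^{-k}),
\]
the last step absorbing the constant and discarding the harmless gain $2^{-r}$.

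Inserting this into the integral formula and integrating the Haar functions trivially,
\[
  \bigl|\pair{T\varphi_I^i}{\varphi_J^j}\bigr|
  \le\frac{C}{\ell(K)^n}\,\omega(2^{-k})\,\Norm{\varphi_I^i}{1}\Norm{\varphi_J^j}{1}.
\]
Dividing by $\omega(2^{-k})$ and bounding $|\varphi_I^i(y)\varphi_J^j(x)|\le\Norm{\varphi_I^i}{\infty}\Norm{\varphi_J^j}{\infty}1_I(y)1_J(x)$, the Haar normalisation $\Norm{\varphi_I^i}{\infty}\Norm{\varphi_I^i}{1}\le C$ (and its analogue for $J$) gives exactly the claimed
\[
  \Babs{\frac{\pair{T\varphi_I^i}{\varphi_J^j}}{\omega(2^{-k})}\,\varphi_I^i(y)\,\varphi_J^j(x)}
  \le\frac{C}{\ell(K)^n}\,1_I(y)\,1_J(x).
\]
I expect the only delicate point to be the two-sided bound $|x-z|\sim\ell(K)$: the upper bound and the H\"older threshold are routine, but pinning down the lower bound $|x-z|\ge c_d\,\ell(K)$ — which is what upgrades $\omega$ of a bounded quantity to $\omega(2^{-k})$ — genuinely requires both the $r$-goodness of $I$ (to keep $I$ away from $H\dot+m$ when $k$ is small) and the size restriction $|m|\sim 2^{k-2}$ (to make $H\dot+m$ far from $H$, at scale $\ell(K)$, when $k$ is large).
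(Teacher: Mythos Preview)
Your proof is correct and follows essentially the same approach as the paper: subtract $K(x,c_I)$ using $\int\varphi_I^i\,d\mu=0$, apply the kernel regularity with $|x-c_I|\gtrsim\ell(K)$, and finish with $\Norm{\varphi_I^i}{1}\Norm{\varphi_I^i}{\infty}\le C$. The only difference is cosmetic: the paper handles the lower bound $\dist(I,J)\gtrsim\ell(K)$ in one line via $\dist(I,J)\gtrsim|m|\,\ell(H)\gtrsim 2^k\ell(H)=\ell(K)$ (which is uniform in $k$, the $r$-goodness absorbing the case $|m|=1$), rather than splitting into small and large $k$.
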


\begin{proof}
Since $I$ is $r$-good and $I^{(r)}=H$, it follows from the definition that $\dist(I,H^c)\geq\frac14\ell(H)$. Since $J\subset H\dot+m$ and $\abs{m}\sim 2^{k-2}$, we further deduce that $\dist(I,J)\gtrsim \abs{m}\ell(H)\gtrsim 2^k\ell(H)=\ell(K)$. Denoting by $c_I$ the centre of $I$ and using the vanishing integral of $\varphi_I^i$ and kernel regularity, we obtain
\begin{align*}
\left|\langle T\varphi_I^{i},\varphi_J^{j}\rangle\right|
&=\left|\iint K(x,y)\varphi_I^{n_1}(y)\varphi_J^{n_2}(x)d\mu(y)d\mu(x)\right|\\
&=\left|\iint (K(x,y)-K(x,c_I))\varphi_I^{i}(y)\varphi_J^{j}(x)d\mu(y)d\mu(x)\right|\\
&\leq\iint C\omega\Big(\frac{\abs{y-c_I}}{\abs{x-c_I}}\Big)\frac{1}{\abs{x-c_I}^d}
     |\varphi_I^{n_1}(y)|\,|\varphi_J^{n_2}(x)|d\mu(y)d\mu(x)\\
&\leq C\frac{\omega(2^{-k})}{\ell(K)^n}\Vert \varphi_I^{i}\Vert_1\Vert \varphi_J^{j}\Vert_1.
\end{align*}
The proof is completed by recalling that $\varphi_I^i$ is supported on $I$ and
\begin{equation*}
  \Norm{\varphi_I^i}{1}\Norm{\varphi_I^i}{\infty}\leq C,
\end{equation*}
with similar observations for $\varphi_J^j$.
\end{proof}

\begin{lemma}\label{lem:BkNorm}
The kernel $b_K^{(k)}$ of $B_K^{(k)}$ satisfies
\begin{equation*}
  \abs{b_K^{(k)}(x,y)}\leq \frac{C}{\ell(K)^n}1_K(x)1_K(y),
\end{equation*}
and hence
\begin{equation*}
  \abs{\pair{B_K^{(k)}f}{g}}
  \leq\frac{C}{\ell(K)^n}\Norm{1_K f}{1}\Norm{1_K g}{1}
  \leq C\Norm{f}{2}\Norm{g}{2}.
\end{equation*}
\end{lemma}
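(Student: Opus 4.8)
The plan is to bound the kernel $b_K^{(k)}$ by summing the pointwise estimate from Lemma~\ref{lem:RkKernel1} over all the summation indices appearing in the formula for $b_K^{(k)}(x,y)$, and then to control the cardinality of the index set for which the summand is nonzero at a fixed pair $(x,y)$. First I would fix $x,y\in\R^d$ and observe that, by Lemma~\ref{lem:RkKernel1}, each term contributes at most $C\ell(K)^{-n}1_I(y)1_J(x)$; the sum over the Haar indices $i,j$ only introduces a dimensional constant, which we absorb into $C$. Next, for the sum to be nonzero at $(x,y)$, we need $y\in I$ and $x\in J$ with $I$ an $r$-good descendant of some $k$-good $H$ with $H^{(k)}=K$, and $J$ a descendant of $H\dot+m$ with $J^{(r)}\supseteq H\dot+m$ and $\abs{m}\sim 2^{k-2}$. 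Since $H\dot+m\subset H^{(k)}=K$ by the preceding lemma (``if $H$ is $k$-good and $\abs{m}\leq 2^{k-2}$, then $H\dot+m\subset H^{(k)}$''), and $I\subset H\subset K$, both $I\subseteq K$ and $J\subseteq K$, so the support is indeed contained in $K\times K$; this gives the factor $1_K(x)1_K(y)$.

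The key remaining point is that, for fixed $x$ and $y$, only boundedly many quadruples $(m,H,I,J)$ give a nonzero contribution. Indeed, $y$ lies in a unique cube at each dyadic scale, so there is at most one admissible $I$ for each choice of its side-length; but $I$ is $r$-good with $I^{(r)}=H$ and $H^{(k)}=K$, which forces $\ell(I)=2^{-(k+r)}\ell(K)$, pinning down $I$ (hence $H=I^{(r)}$) uniquely. Similarly $x$ determines $J$ once its scale is fixed, and the constraint $J\subseteq H\dot+m\subseteq J^{(r)}$ together with $\ell(H\dot+m)=\ell(H)=2^{-k}\ell(K)$ restricts $\ell(J)$ to the $r$ values $2^{-(k)}\ell(K),\dots,2^{-(k+r-1)}\ell(K)$; once $J$ is chosen, $H\dot+m$ is the unique ancestor of $J$ at scale $2^{-k}\ell(K)$, which determines $m$ (and is consistent with exactly one already-fixed $H$). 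Hence at most $O(r)=O(1)$ terms survive, and summing Lemma~\ref{lem:RkKernel1} over them yields
\begin{equation*}
  \abs{b_K^{(k)}(x,y)}\leq\frac{C}{\ell(K)^n}1_K(x)1_K(y),
\end{equation*}
as claimed.

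For the bilinear bound, I would integrate the kernel estimate:
\begin{equation*}
  \abs{\pair{B_K^{(k)}f}{g}}
  \leq\iint\abs{b_K^{(k)}(x,y)}\abs{f(y)}\abs{g(x)}\,d\mu(y)\,d\mu(x)
  \leq\frac{C}{\ell(K)^n}\Norm{1_K f}{1}\Norm{1_K g}{1}.
\end{equation*}
Then Cauchy--Schwarz gives $\Norm{1_K f}{1}\leq\mu(K)^{1/2}\Norm{1_K f}{2}$ and likewise for $g$, while the growth condition $\mu(K)\leq C\ell(K)^n$ supplies $\mu(K)/\ell(K)^n\leq C$; combining these yields $\abs{\pair{B_K^{(k)}f}{g}}\leq C\Norm{1_K f}{2}\Norm{1_K g}{2}\leq C\Norm{f}{2}\Norm{g}{2}$.

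I expect the main obstacle to be the bookkeeping in the cardinality argument: one must carefully track how the nested goodness and support constraints ($I$ being $r$-good inside $H$, $H$ being $k$-good inside $K$, $J\subseteq H\dot+m\subseteq J^{(r)}$, $\abs{m}\sim 2^{k-2}$) collapse all four free indices $m,H,I,J$ to a bounded set once $x$ and $y$ are fixed, and to check that the scale of $K$ enters exactly as $\ell(K)^{-n}$ rather than, say, $\ell(K)^{-n}$ times a factor growing in $k$. The geometric lemma $H\dot+m\subset H^{(k)}$ and the exact scale relations $\ell(I)=2^{-(k+r)}\ell(K)$, $\ell(H)=2^{-k}\ell(K)$ are precisely what make this work, so the argument is essentially a careful application of results already established.
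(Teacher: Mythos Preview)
Your proof is correct and follows essentially the same approach as the paper: both apply Lemma~\ref{lem:RkKernel1} termwise and then count, for fixed $(x,y)$, how many quadruples $(m,H,I,J)$ can contribute, using the scale constraints and the inclusion $H\dot+m\subset H^{(k)}=K$ to obtain the support and the bound. The only slip is that the range of admissible $\ell(J)$ is actually the $r+1$ values $2^{-k}\ell(K),\dots,2^{-(k+r)}\ell(K)$ (since $J\subseteq H\dot+m\subseteq J^{(r)}$ allows $J=H\dot+m$ as well), but this is immaterial to the $O(1)$ conclusion.
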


\begin{proof}
From the previous lemma, we conclude that
\begin{equation*}
\begin{split}
  \abs{b_K^{(k)}(x,y)}
  &\leq\frac{C}{\ell(K)^n}
  \sum_{\abs{m}\sim 2^{k-2}}\sum_{\substack{H\,k\text{-good} \\ H^{(k)}=K }}
  \sum_{\substack{I\,r\text{-good} \\ I^{(r)}=H}}\sum_{\substack{J:J\subseteq H\dot+m \\ \ \subseteq J^{(r)}}}1_I(y)1_J(x) \\
  &\leq\frac{C}{\ell(K)^n}
  \sum_{\abs{m}\sim 2^{k-2}}\sum_{\substack{H\,k\text{-good} \\ H^{(k)}=K }}1_H(y)1_{H\dot+m}(x), \\
\end{split}
\end{equation*}
since the cubes $I:I^{(r)}=H$ are pairwise disjoint, and the cubes $J:J\subseteq H\dot+m\subseteq J^{(r)}$ have overlap at most $r+1$ times at any point, and we have incorporated this fixed constant into $C$ above.

Consider a fixed pair $(x,y)$. Then there is at most one $H:H^{(k)}=K$ such that $H\owns y$. For this $H$, there is at most one integer $m$ such that $H\dot+m\owns x$. So altogether there is at most one non-zero summand above. Since both $H$ and $H\dot+m$ are subsets of $K$, the non-zero summand can only exist is $(x,y)\in K\times K$. This proves the required kernel bound.

In the operator bound, the first estimate is immediate, and the second one follows by $\Norm{1_K f}{1}\leq\mu(K)^{1/2}\Norm{f}{2}$ (applied to $g$ as well) and $\mu(K)\leq C\ell(K)^n$.
\end{proof}

\begin{lemma}
\begin{equation*}
  \abs{\pair{R_k f}{g}}\leq C\Norm{f}{2}\Norm{g}{2}.
\end{equation*}
\end{lemma}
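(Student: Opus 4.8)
The plan is to exploit the tight scale localisation of the building blocks $B_K^{(k)}$. By construction, or equivalently by the orthogonality relation $B_K^{(k)}=(P_K^{(k+r+1)}-P_K^{(k)})B_K^{(k)}D_K^{(k+r)}$ recorded in Theorem~\ref{thm:details}, the operator $B_K^{(k)}$ only sees, on the input side, the martingale differences $D_I$ with $I^{(k+r)}=K$, and produces, on the output side, functions supported on dyadic cubes $J\subseteq K$ with $2^{-(k+r)}\ell(K)\le\ell(J)\le 2^{-k}\ell(K)$, i.e.\ lying in the range of $\sum_{j=k}^{k+r}D_K^{(j)}=P_K^{(k+r+1)}-P_K^{(k)}$. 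Since the truncated martingale operators $P_K^{(\cdot)}$ and $D_K^{(\cdot)}$ are self-adjoint, this yields the factorisation $\pair{B_K^{(k)}f}{g}=\pair{B_K^{(k)}D_K^{(k+r)}f}{(P_K^{(k+r+1)}-P_K^{(k)})g}$.

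Next I would insert the uniform operator bound $\Norm{B_K^{(k)}}{L^2(\mu)\to L^2(\mu)}\le C$ provided by Lemma~\ref{lem:BkNorm}, sum over $K\in\mathscr{D}$, and apply Cauchy--Schwarz in $K$:
\[
  \abs{\pair{R_k f}{g}}\le\sum_K\abs{\pair{B_K^{(k)}f}{g}}\le C\Big(\sum_K\Norm{D_K^{(k+r)}f}{2}^2\Big)^{1/2}\Big(\sum_K\Norm{(P_K^{(k+r+1)}-P_K^{(k)})g}{2}^2\Big)^{1/2}.
\]
The first factor is at most $\Norm{f}{2}$ by the quasi-orthogonality estimate recorded in Section~\ref{s2} (applied with $k+r$ in place of $k$). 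For the second factor, expanding $P_K^{(k+r+1)}-P_K^{(k)}=\sum_{j=k}^{k+r}D_K^{(j)}$ and using orthogonality of the $D_K^{(j)}$ both across $j$ and, again via Section~\ref{s2}, across $K$, one obtains $\sum_K\Norm{(P_K^{(k+r+1)}-P_K^{(k)})g}{2}^2=\sum_{j=k}^{k+r}\sum_K\Norm{D_K^{(j)}g}{2}^2\le(r+1)\Norm{g}{2}^2$. Since $r$ is a fixed parameter, the factor $\sqrt{r+1}$ is absorbed into $C$, and the asserted bound follows with a constant independent of $k$.

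This argument carries essentially no serious obstacle once the factorisation through $D_K^{(k+r)}$ and $P_K^{(k+r+1)}-P_K^{(k)}$ is in place; the only real content is that this factorisation holds with the \emph{modified} notion of goodness and under the finitary set-up of Section~\ref{sec:finitary}, which is exactly the verification already deferred to the reader there. It is worth stressing \emph{why} the bound comes out uniform in $k$, in contrast with the $\sqrt{\abs{k}}$ loss that appears for $Q_k$: the output of $B_K^{(k)}$ occupies only $r+1=O(1)$ dyadic scales below $K$, so the projection $P_K^{(k+r+1)}-P_K^{(k)}$ costs only an absolute constant in $\ell^2(K)$, whereas the analogous output for $A_K^{(k)}$ lands in $P_K^{(k+1)}$ and hence spreads over $O(\abs{k})$ scales.
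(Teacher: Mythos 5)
Your proof is correct and follows essentially the same route as the paper: you factorise $B_K^{(k)}$ through $D_K^{(k+r)}$ and $P_K^{(k+r+1)}-P_K^{(k)}$, invoke the uniform $L^2$ bound on $B_K^{(k)}$ from Lemma~\ref{lem:BkNorm}, apply Cauchy--Schwarz over $K$, and control the two square functions using the quasi-orthogonality of martingale differences, with the $\sqrt{r+1}$ factor absorbed into the constant. Your closing remark contrasting the $O(1)$ output scales of $B_K^{(k)}$ with the $O(k)$ scales of $A_K^{(k)}$ correctly identifies why $R_k$ is uniformly bounded while $Q_k$ picks up the $\sqrt{k}$ factor.
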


\begin{proof}
In the summation defining $B_K^{(k)}$, we observe that $I^{(r)}=H$ and $H^{(k)}=K$, hence $I^{(r+k)}=K$, and $J^{(j)}=H\dot+m$ for some $j=0,\ldots,r$ and $(H\dot+m)^{(k)}=K$, hence $J^{(j+k)}=K$ for some $j=0,\ldots,r$. It follows that
\begin{equation*}
  B_K^{(k)}=\sum_{j=0}^r D_K^{(k+j)}B_K^{(k)}D_K^{(k+r)}
  =(P_K^{(k+r+1)}-P_K^{(k)})B_K^{(k)}D_K^{(k+r)},
\end{equation*}
and hence
\begin{equation*}
\begin{split}
   \abs{\pair{R_k f}{g}}
  & \leq\sum_K \abs{\pair{(B_K^{(k)}D_K^{(k+r)}f}{(P_K^{(k+r+1)}-P_K^{(k)})g}} \\
  &\leq C\Big(\sum_K\Norm{D_K^{(k+r)}f}{2}^2\Big)^{1/2}
  \Big(\sum_K\Norm{(P_K^{(k+r+1)}-P_K^{(k)})g}{2}^2\Big)^{1/2} \\
  & \leq C\Norm{f}{2}\Big(\sum_{j=0}^r\sum_K\Norm{D_K^{(k+j)}g}{2}^2\Big)^{1/2} \\
  &\leq C\Norm{f}{2}\sqrt{r+1}\Norm{g}{2}
  \leq C\Norm{f}{2}\Norm{g}{2}.\qedhere
\end{split}
\end{equation*}
\end{proof}

\subsection{The operators $Q_k$}

Noting that $Q_1:=0$ trivially satisfies the required estimates, we concentrate on $Q_k$ with $k\geq 2$.

Taking the new summation variable $K:=H^{(k)}$ as for $R_k$, we are led to the decomposition
\begin{equation*}
\begin{split}
  Q_k f &=\sum_K\sum_{\abs{m}\sim 2^{k-2}}
      \sum_{\substack{ H\,k\text{-good} \\ H^{(k)}=K }}
      \frac{\langle TD_H^{(r,\good)}f, 1_{H\dot+m}\rangle}{\pi_{\good}^2\omega(2^{-k})}\Big(\frac{1_{H\dot+m}}{\mu(H\dot+m)}-\frac{1_H}{\mu(H)}\Big) \\
      &=:\sum_K A_K^{(k)}f.
\end{split}
\end{equation*}

\begin{lemma}
\begin{equation*}
    A_K^{(k)}
    =P^{(k+1)}_KA_K^{(k)}D_K^{(k+r)}
\end{equation*}
\end{lemma}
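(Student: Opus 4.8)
The plan is to prove the two one-sided identities $A_K^{(k)}=A_K^{(k)}D_K^{(k+r)}$ (the ``input'' side) and $A_K^{(k)}=P_K^{(k+1)}A_K^{(k)}$ (the ``output'' side) separately, and then compose them.

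For the input side I would use that, by its very definition, $A_K^{(k)}f$ depends on $f$ only through the finitely many martingale-difference blocks $D_H^{(r,\good)}f$ with $H$ $k$-good and $H^{(k)}=K$. Expanding $D_H^{(r,\good)}f=\sum_{I\,r\text{-good},\ I^{(r)}=H}D_If$ and noting that $I^{(r)}=H$ together with $H^{(k)}=K$ forces $I^{(r+k)}=K$, the elementary orthogonality $D_ID_{I'}=\delta_{I,I'}D_I$ gives $D_I(D_K^{(k+r)}f)=D_If$ for each such $I$, whence $D_H^{(r,\good)}(D_K^{(k+r)}f)=D_H^{(r,\good)}f$ and therefore $A_K^{(k)}f=A_K^{(k)}(D_K^{(k+r)}f)$ for every $f$.

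For the output side, the key observation is that the range of $A_K^{(k)}$ is spanned by the functions $\phi_{H,m}:=\mu(H\dot+m)^{-1}1_{H\dot+m}-\mu(H)^{-1}1_H$, over $H$ $k$-good with $H^{(k)}=K$ and $\abs{m}\sim 2^{k-2}$, and that each such $\phi_{H,m}$ is left unchanged by $P_K^{(k+1)}$. Indeed, $H\dot+m$ is the translate of $H$ by the grid vector $m\,\ell(H)$ with $m\in\Z^d$, hence is aligned with the scale-$\ell(H)$ grid of $\mathscr{D}$ (for the scales occurring in the finitary set-up this is literally a dyadic cube of $\mathscr{D}$); moreover $H\cap(H\dot+m)=\emptyset$ since $m\neq 0$, and $H\dot+m\subset H^{(k)}=K$ by the goodness lemma above. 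Thus $\phi_{H,m}$ is supported in $K$, satisfies $\int_K\phi_{H,m}\,d\mu=0$, and is constant on each of the two disjoint cubes $H$ and $H\dot+m$. By nestedness, any $J\in\mathscr{D}$ with $\ell(J)<\ell(H)$ lies in $H$, lies in $H\dot+m$, or is disjoint from both, and in every case $\phi_{H,m}$ is constant on $J$, so $D_J\phi_{H,m}=0$. Consequently the localized martingale expansion $\phi_{H,m}=1_K\ave{\phi_{H,m}}_K+\sum_{J\in\mathscr{D},\,J\subseteq K}D_J\phi_{H,m}$ collapses to a sum over $J\subseteq K$ with $\ell(J)\geq\ell(H)=2^{-k}\ell(K)$, i.e. $\phi_{H,m}=\sum_{j=0}^{k}D_K^{(j)}\phi_{H,m}=P_K^{(k+1)}\phi_{H,m}$; summing in $H$ and $m$ gives $A_K^{(k)}=P_K^{(k+1)}A_K^{(k)}$.

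Composing the two identities yields $A_K^{(k)}=P_K^{(k+1)}A_K^{(k)}=P_K^{(k+1)}A_K^{(k)}D_K^{(k+r)}$. The only step that is not pure bookkeeping is the remark that $H\dot+m$ sits on the dyadic grid of $\mathscr{D}$ at scale $\ell(H)$: this is precisely what rules out a small cube $J\in\mathscr D$ overlapping $H\dot+m$ only partially, and hence makes the three-case dichotomy above exhaustive. Everything else is a direct unwinding of the definitions of $P^{(\cdot)}_K$, $D^{(\cdot)}_K$ and the standard Haar/martingale orthogonality relations.
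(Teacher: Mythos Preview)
Your proof is correct and follows essentially the same approach as the paper: both split into input and output sides, handle the input side via $I^{(k+r)}=K$, and handle the output side by checking that the range vectors $\phi_{H,m}$ are supported in $K$, have mean zero, and are constant on the $k$th order descendants of $K$ (the paper states these three properties as a characterisation of the range of $P_K^{(k+1)}$, while you spell out the resulting martingale expansion). Your explicit remark that $H\dot+m$ is itself a dyadic cube of $\mathscr D$ at scale $\ell(H)$ is exactly what the paper is tacitly using when it calls this step ``immediate''.
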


\begin{proof}
Expanding
\begin{equation*}
  D_H^{(r,\good)}=\sum_{\substack{ I\,r\text{-good} \\ I^{(r)}=H}}D_I
\end{equation*}
and noting that $I^{(r)}=H$ and $H^{(k)}=K$ imply $I^{(r+k)}=K$, the identity $A_K^{(k)}=D_K^{(k+r)}$ is immediate.

Concerning the post-composition of $A_K^{(k)}$ by $P_K^{(k+1)}$, we make the following observations. First, $A_K^{(k)}f$ is supported on $K$, which again depends on the fact that the $k$-goodness of $H$ together with $\abs{m}\sim 2^{k-2}$ imply that $H\dot+m$ is contained in $H^{(k)}=K$. Second, $A_K^{(k)}f$ is constant on the $k$th order descendant of $K$, which is immediate from its expression as a superposition of the relevant indicator functions. Thirs, $\int A_K^{(k)}f d\mu=0$, which is also immediate from the fact that this property clearly holds for each of the summands,
\begin{equation*}
  \int\Big(\frac{1_{H\dot+m}}{\mu(H\dot+m)}-\frac{1_H}{\mu(H)}\Big)d\mu=0.
\end{equation*}
These three properties characterise the range of the projection $P_K^{(k+1)}$, and hence $A_K^{(k)}=P_K^{(k+1)}A_K^{(k)}$.
\end{proof}

The kernel of $A_K^{(k)}$ is given by
\begin{equation*}
\begin{split}
  & a_K^{(k)}(x,y) \\
  &=\sum_{\abs{m}\sim 2^{k-2}}
      \sum_{\substack{ H\,k\text{-good} \\ H^{(k)}=K }}\sum_{\substack{ I\,r\text{-good} \\ I^{(r)}=H }}\sum_i
      \frac{\langle T\varphi_I^i, 1_{H\dot+m}\rangle}{\pi_{\good}^2\omega(2^{-k})}
         \varphi_I^i(y)\Big(\frac{1_{H\dot+m}(x)}{\mu(H\dot+m)}-\frac{1_H(x)}{\mu(H)}\Big) \\
   &=:a_{K,1}^{(k)}(x,y)-a_{K,2}^{(k)}(x,y),
\end{split}
\end{equation*}
where the last two kernels are defined in a natural way by taking all the terms of the form $1_{H\dot+m}(x)/\mu(H\dot+m)$ to the first one, and all those of the form $1_{H}(x)/\mu(H)$ to the second.

\begin{lemma}\label{lem:AkKernel1}
In the sum above, we have
\begin{equation*}
  \Babs{ \frac{\langle T\varphi_I^i, 1_{H\dot+m}\rangle}{\omega(2^{-k})}\varphi_I^i(y)}
  \leq \frac{C}{\ell(K)^n}\mu(H\dot+m)1_I(y).
\end{equation*}
\end{lemma}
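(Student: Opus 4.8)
The plan is to follow the template of Lemma~\ref{lem:RkKernel1} essentially verbatim, the only structural change being that the second Haar function $\varphi_J^j$ appearing there is replaced here by the indicator $1_{H\dot+m}$, which upon integration contributes a factor $\mu(H\dot+m)$ rather than $\Norm{\varphi_J^j}{1}$. Thus the proof should reduce to (i) a geometric separation estimate between $I$ and $H\dot+m$, (ii) one application of the kernel smoothness hypothesis after exploiting the cancellation of $\varphi_I^i$, and (iii) elementary bookkeeping with the Haar normalisation.

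Concretely, I would first record that since $I$ is $r$-good with $I^{(r)}=H$, it lies at distance $\geq\tfrac14\ell(H)$ from $\partial H$, and since $m\neq 0$ the translate $H\dot+m$ is disjoint from $H$; a coordinatewise computation (of the same flavour as the one showing $H\dot+m\subset H^{(k)}$) then gives $\dist(I,H\dot+m)\gtrsim\abs{m}\ell(H)$, which, using $\abs{m}\sim 2^{k-2}$ and $\ell(K)=\ell(H^{(k)})=2^{k}\ell(H)$, is $\gtrsim\ell(K)$. In particular $I$ and $H\dot+m$ are disjoint, so the kernel representation is available. Next, using $\int\varphi_I^i\,d\mu=0$, I would write
\[
  \pair{T\varphi_I^i}{1_{H\dot+m}}=\iint_{I\times(H\dot+m)}\bigl(K(x,y)-K(x,c_I)\bigr)\varphi_I^i(y)\,d\mu(y)\,d\mu(x),
\]
where $c_I$ is the centre of $I$, and apply the smoothness estimate: for $y\in I$ and $x\in H\dot+m$ one has $\abs{y-c_I}\lesssim 2^{-r}\ell(H)$ and $\abs{x-c_I}\gtrsim\ell(K)$, so $\abs{y-c_I}\leq\tfrac12\abs{x-c_I}$ and
\[
  \abs{K(x,y)-K(x,c_I)}\leq\frac{C}{\abs{x-c_I}^n}\,\omega\Bigl(\frac{\abs{y-c_I}}{\abs{x-c_I}}\Bigr)\lesssim\frac{\omega(2^{-k})}{\ell(K)^n},
\]
using that the ratio is $\lesssim 2^{-r-k}\leq 2^{-k}$ and that $\omega$ is increasing and subadditive (so $\omega$ of a fixed multiple of $t$ is $\lesssim\omega(t)$). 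Integrating then yields $\abs{\pair{T\varphi_I^i}{1_{H\dot+m}}}\lesssim\omega(2^{-k})\ell(K)^{-n}\Norm{\varphi_I^i}{1}\mu(H\dot+m)$, and multiplying by $\abs{\varphi_I^i(y)}\leq\Norm{\varphi_I^i}{\infty}1_I(y)$, invoking the Haar bound $\Norm{\varphi_I^i}{1}\Norm{\varphi_I^i}{\infty}\leq C$, and dividing by $\omega(2^{-k})$ gives the assertion.

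The only step requiring genuine care — and it is exactly the same issue as in Lemma~\ref{lem:RkKernel1} — is verifying the separation $\dist(I,H\dot+m)\gtrsim\ell(K)$: the $\tfrac14\ell(H)$ margin coming from $r$-goodness of $I$ is what handles the borderline case $\abs{m}\sim 1$ (i.e.\ $k=2$), whereas for larger $k$ the estimate is dominated simply by the fact that $\abs{m}\sim 2^{k-2}$ is already comparable to $\ell(K)/\ell(H)$. Once that geometric fact is in hand, the rest is routine and identical in spirit to the bound already proved for the kernels of the $B_K^{(k)}$.
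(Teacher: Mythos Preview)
Your proposal is correct and follows essentially the same approach as the paper's own proof: both reduce to the template of Lemma~\ref{lem:RkKernel1}, using $r$-goodness of $I$ together with $\abs{m}\sim 2^{k-2}$ to obtain $\dist(I,H\dot+m)\gtrsim\ell(K)$, then exploit the cancellation $\int\varphi_I^i\,d\mu=0$ to insert $K(x,c_I)$, apply kernel smoothness, and finish with the Haar normalisation $\Norm{\varphi_I^i}{1}\Norm{\varphi_I^i}{\infty}\leq C$. Your write-up is in fact slightly more explicit than the paper's in justifying the ratio bound $\abs{y-c_I}/\abs{x-c_I}\lesssim 2^{-k}$ and the use of subadditivity of $\omega$.
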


\begin{proof}
By essentially the same considerations as in the beginning of the proof of Lemma~\ref{lem:RkKernel1}, we check that $\dist(I,H\dot+m)\gtrsim 2^k\ell(H)=\ell(K)$. As in the same proof, using the vanishing integral of $\varphi_I^i$ to insert $K(x,c_I)$, where $c_I$ is the centre of $I$, we hace
\begin{align*}
\left|\langle T\varphi_I^{n_1},1_{H\dot{+}m}\rangle\right|&=\left|\iint K(x,y)\varphi_I^{i}(y)1_{H\dot{+}m}(x)d\mu(y)d\mu(x)\right|\\
&=\left|\iint (K(x,y)-K(x,c_I))\varphi_I^{i}(y)1_{H\dot{+}m}(x)d\mu(y)d\mu(x)\right| \\
&\leq\iint \omega\Big(\frac{\abs{y-c_I}}{\abs{x-c_I}}\Big)\frac{C}{\abs{x-c_I}^d}|\varphi_I^{n_1}(y)|\, 1_{H\dot{+}m}(x)d\mu(y)d\mu(x)\\
&\leq C\omega(2^{-k})\frac{\Vert\varphi_I^{n_1}\Vert_1\mu(H\dot{+}m)}{\ell(K)^n}.
\end{align*}
The proof is completed by recalling that $\varphi_I^i$ is supported on $I$, and using the estimate $\Norm{\varphi_I^i}{1}\Norm{\varphi_I^i}{\infty}\leq C$.
\end{proof}

\begin{lemma}
\begin{equation*}
    \abs{a_{K,1}^{(k)}(x,y)}
    \leq \frac{C}{\ell(K)^n}1_K(x)1_K(y)
\end{equation*}
\end{lemma}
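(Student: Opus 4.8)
The plan is to feed the pointwise estimate of Lemma~\ref{lem:AkKernel1} into the series defining $a_{K,1}^{(k)}$ and to exploit the cancellation of the weight $\mu(H\dot+m)$ against the factor $1/\mu(H\dot+m)$ carried by the function $1_{H\dot+m}/\mu(H\dot+m)$. Concretely, writing
\begin{equation*}
  a_{K,1}^{(k)}(x,y)
  =\sum_{\abs{m}\sim 2^{k-2}}\sum_{\substack{H\,k\text{-good}\\ H^{(k)}=K}}
   \sum_{\substack{I\,r\text{-good}\\ I^{(r)}=H}}\sum_i
   \frac{\pair{T\varphi_I^i}{1_{H\dot+m}}}{\pi_{\good}^2\omega(2^{-k})}
   \varphi_I^i(y)\frac{1_{H\dot+m}(x)}{\mu(H\dot+m)},
\end{equation*}
Lemma~\ref{lem:AkKernel1} bounds the modulus of the $i$-th summand by $\frac{C}{\ell(K)^n}\mu(H\dot+m)1_I(y)\cdot\frac{1_{H\dot+m}(x)}{\mu(H\dot+m)}=\frac{C}{\ell(K)^n}1_I(y)1_{H\dot+m}(x)$, so that
\begin{equation*}
  \abs{a_{K,1}^{(k)}(x,y)}
  \leq\frac{C}{\ell(K)^n}\sum_{\abs{m}\sim 2^{k-2}}\sum_{\substack{H\,k\text{-good}\\ H^{(k)}=K}}
   \sum_{\substack{I\,r\text{-good}\\ I^{(r)}=H}}\sum_i 1_I(y)1_{H\dot+m}(x).
\end{equation*}

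Next I would carry out the same counting argument used in the proof of Lemma~\ref{lem:BkNorm}. The sum over $i$ has at most $2^d-1$ terms, a dimensional constant absorbed into $C$; and since the cubes $I$ with $I^{(r)}=H$ are pairwise disjoint, $\sum_{I:I^{(r)}=H}1_I(y)\leq 1_H(y)$, leaving $\abs{a_{K,1}^{(k)}(x,y)}\leq\frac{C}{\ell(K)^n}\sum_{\abs{m}\sim 2^{k-2}}\sum_{H^{(k)}=K,\,k\text{-good}}1_H(y)1_{H\dot+m}(x)$. Now fix $(x,y)$: there is at most one cube $H$ with $H^{(k)}=K$ containing $y$, and for that $H$ there is at most one integer vector $m$ with $x\in H\dot+m$, so at most one summand is nonzero and it equals $1$. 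Finally, by the lemma immediately preceding the decomposition of $\pair{Tf}{g}$ (that $k$-goodness of $H$ together with $\abs{m}\leq 2^{k-2}$ forces $H\dot+m\subset H^{(k)}=K$), together with $H\subset K$, that surviving term can be nonzero only when $(x,y)\in K\times K$, which gives the claimed bound.

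I do not expect a genuine obstacle here: the argument is a verbatim repetition of the proof of Lemma~\ref{lem:BkNorm}, the only new ingredient being the bookkeeping observation that the $\mu(H\dot+m)$ produced by Lemma~\ref{lem:AkKernel1} is exactly cancelled. The one point worth stating carefully is this cancellation, since it is the reason the $a_{K,1}^{(k)}$ part obeys the same $1/\ell(K)^n$ bound as $b_K^{(k)}$ despite the apparently worse factor in Lemma~\ref{lem:AkKernel1}; the $a_{K,2}^{(k)}$ part, treated in the next lemma, will instead produce the second ($\mu(H)^{-1}$) term in the kernel bound of Theorem~\ref{thm:details}.
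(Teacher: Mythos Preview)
Your proposal is correct and follows essentially the same approach as the paper's own proof: apply Lemma~\ref{lem:AkKernel1}, cancel $\mu(H\dot+m)$ against $1/\mu(H\dot+m)$, use disjointness of the $I$'s to collapse to $1_H(y)$, and then run the one-$H$, one-$m$ counting argument together with $H\dot+m\subset K$. The paper is simply terser, omitting the explicit mention of the $\mu(H\dot+m)$ cancellation and the sum over the Haar index $i$.
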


\begin{proof}
From the previous lemma and the disjointness of $I:I^{(r)}=H$, we have
\begin{equation*}
\begin{split}
  \abs{a_{K,1}^{(k)}(x,y)}
    &\leq \sum_{\abs{m}\sim 2^{k-2}}
      \sum_{\substack{ H\,k\text{-good} \\ H^{(k)}=K }}\frac{C}{\ell(K)^n}1_H(y)1_{H\dot+m}(x).
\end{split}
\end{equation*}
For a pair $(x,y)$, there is at most one $H:H^{(k)}=K$ such that $H\owns y$, and fixing this $H$, there is at most one integer $m$ such that $H\dot+m\owns x$. Moreover, $(x,y)\in K\times K$ is a necessary condition for the existence of such $H$ and $m$. This proves the asserted bound.
\end{proof}

\begin{lemma}
\begin{equation*}
    \abs{a_{K,2}^{(k)}(x,y)}
    \leq C\sum_{H:H^{(k)}=K}\frac{1_H(x)1_H(y)}{\mu(H)}.
\end{equation*}
\end{lemma}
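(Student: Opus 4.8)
The plan is to estimate the second piece $a_{K,2}^{(k)}$ of the kernel of $A_K^{(k)}$, i.e., the part collecting all terms of the form $1_H(x)/\mu(H)$. Explicitly,
\begin{equation*}
  a_{K,2}^{(k)}(x,y)
  =\sum_{\abs{m}\sim 2^{k-2}}\sum_{\substack{H\,k\text{-good}\\ H^{(k)}=K}}\sum_{\substack{I\,r\text{-good}\\ I^{(r)}=H}}\sum_i
     \frac{\pair{T\varphi_I^i}{1_{H\dot+m}}}{\pi_{\good}^2\omega(2^{-k})}\varphi_I^i(y)\frac{1_H(x)}{\mu(H)}.
\end{equation*}
First I would apply Lemma~\ref{lem:AkKernel1} to bound each term $\bigl|\omega(2^{-k})^{-1}\pair{T\varphi_I^i}{1_{H\dot+m}}\varphi_I^i(y)\bigr|$ by $C\ell(K)^{-n}\mu(H\dot+m)1_I(y)$, and then use the growth bound $\mu(H\dot+m)\leq C\ell(H\dot+m)^n=C\ell(H)^n$ together with $\ell(H)=2^{-k}\ell(K)$, so that $\mu(H\dot+m)/\ell(K)^n\leq C2^{-kn}$.

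Next I would carry out the summations in the right order. For a fixed $H$ with $H^{(k)}=K$, the cubes $I$ with $I^{(r)}=H$ are pairwise disjoint, so $\sum_{I:I^{(r)}=H}\sum_i 1_I(y)\leq 1_H(y)$ (the sum over $i$ contributes only the fixed dimensional factor $2^d-1$, absorbed into $C$). This collapses the $I$-sum and produces a factor $1_H(y)\,1_H(x)/\mu(H)$. The remaining sum over $m$ with $\abs{m}\sim 2^{k-2}$ has $O(2^{(k-2)d})$ terms, each contributing the same bound $C2^{-kn}$; since $n\geq d$ (in fact here we are in the setting $n=d$ only when needed, but in general $\mu$ has order $n$ on $\R^d$, and $2^{(k-2)d}2^{-kn}\leq C$ because $n\ge d$ — more precisely one only needs the harmless bound $\mu(H\dot+m)\le C\ell(H)^n$ and there are at most $C2^{kd}$ choices of $m$, giving total $\le C2^{kd}2^{-kn}\le C$). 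Collecting everything,
\begin{equation*}
  \abs{a_{K,2}^{(k)}(x,y)}
  \leq C\sum_{\substack{H\,k\text{-good}\\ H^{(k)}=K}}\frac{1_H(x)1_H(y)}{\mu(H)}
  \leq C\sum_{H:H^{(k)}=K}\frac{1_H(x)1_H(y)}{\mu(H)},
\end{equation*}
which is exactly the claimed estimate.

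The only subtle point — and the one I would check carefully — is the interplay between the number of admissible translates $m$ (which grows like $2^{kd}$) and the decay coming from $\mu(H\dot+m)/\ell(K)^n$. Because $\mu$ is only assumed to have growth of order $n$ and $H\dot+m$ has side-length $2^{-k}\ell(K)$, one gets $\mu(H\dot+m)\le C2^{-kn}\ell(K)^n$, and $2^{kd}\cdot 2^{-kn}\le C$ precisely because $n\ge d$; this is the structural reason the estimate closes without any extra $k$-dependence, and it is the step most likely to hide an error if one is careless about dimensions. Everything else is bookkeeping: disjointness of the $I$'s, the at-most-$(r+1)$-fold overlap already absorbed into constants, and the trivial sum over the Haar index $i$.
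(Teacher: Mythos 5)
Your argument breaks at exactly the point you flagged, and the direction of the worry is reversed. You close the $m$-sum by first using the growth bound $\mu(H\dot+m)\le C\ell(H)^n=C2^{-kn}\ell(K)^n$ termwise and then multiplying by the $O(2^{kd})$ admissible translates, obtaining $2^{k(d-n)}$ in total; this is only bounded if $n\ge d$. But in the non-homogeneous framework of Nazarov--Treil--Volberg that this paper works in (``a Borel measure $\mu$ of order $n$ on $\R^d$''), the interesting and intended case is $n\le d$, and often strictly $n<d$ (think of the Cauchy transform on linear-growth measures in $\R^2$, with $n=1$, $d=2$). There your estimate degrades to $C2^{k(d-n)}$, which is incompatible with the claimed $k$-uniform kernel bound and would destroy the summability needed for Corollary~\ref{cor:T1}. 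Nothing in the paper assumes $n\ge d$, so this step is a genuine failure of the argument, not merely a rough constant.

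The fix is to avoid the termwise growth bound on $\mu(H\dot+m)$ and instead use a global one: for fixed $H$, the cubes $H\dot+m$ with $\abs{m}\sim 2^{k-2}$ are pairwise disjoint and (since $H$ is $k$-good) all contained in $K=H^{(k)}$, so
\begin{equation*}
  \sum_{\abs{m}\sim 2^{k-2}}\mu(H\dot+m)\le\mu(K)\le C\ell(K)^n,
\end{equation*}
and this cancels the $\ell(K)^{-n}$ from Lemma~\ref{lem:AkKernel1} with no dimensional hypothesis at all. The rest of your bookkeeping (disjointness of the $I$'s with $I^{(r)}=H$, absorbing the finite sum over the Haar index $i$, dropping the $k$-goodness restriction in the final $H$-sum) is exactly the paper's argument and is fine. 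The moral: when the growth exponent $n$ may be smaller than the ambient dimension $d$, one should sum the measures of disjoint pieces inside a common container rather than count pieces and multiply by a worst-case measure.
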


\begin{proof}
By the same initial considerations as in the previous lemma, we have
\begin{equation*}
    \abs{a_{K,2}^{(k)}(x,y)}
    \leq \sum_{\abs{m}\sim 2^{k-2}}
      \sum_{\substack{ H\,k\text{-good} \\ H^{(k)}=K }}\frac{C}{\ell(K)^n}\mu(H\dot+m)1_H(y)\frac{1_{H}(x)}{\mu(H)}
\end{equation*}
For each fixed $H$, we observe that the cubes $H\dot+m$, with $\abs{m}\sim 2^{k-2}$, are pairwise disjoint and contained in $K$, thus
\begin{equation*}
  \sum_{\abs{m}\sim 2^{k-2}}\mu(H\dot+m)
  \leq\mu(K)\leq C\ell(K)^n.\qedhere
\end{equation*}
\end{proof}

\begin{lemma}
\begin{equation*}
   \abs{\pair{A_K^{(k)}f}{g}}\leq C\Norm{f}{2}\Norm{g}{2}.
\end{equation*}
\end{lemma}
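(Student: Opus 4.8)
The plan is to feed the three kernel bounds just established into the elementary estimate
\[
  \abs{\pair{A_K^{(k)}f}{g}}\le\iint\abs{a_K^{(k)}(x,y)}\,\abs{f(y)}\,\abs{g(x)}\,d\mu(y)\,d\mu(x),
\]
treating the two pieces of $a_K^{(k)}=a_{K,1}^{(k)}-a_{K,2}^{(k)}$ separately. For the contribution of $a_{K,1}^{(k)}$ I would invoke the bound $\abs{a_{K,1}^{(k)}(x,y)}\le C\ell(K)^{-n}1_K(x)1_K(y)$; this is exactly the situation already handled in Lemma~\ref{lem:BkNorm}, so the corresponding double integral is at most $C\ell(K)^{-n}\Norm{1_Kf}{1}\Norm{1_Kg}{1}$, and then H\"older's inequality $\Norm{1_Kh}{1}\le\mu(K)^{1/2}\Norm{1_Kh}{2}$ together with the growth bound $\mu(K)\le C\ell(K)^n$ gives $C\Norm{1_Kf}{2}\Norm{1_Kg}{2}$.

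For the contribution of $a_{K,2}^{(k)}$ I would use $\abs{a_{K,2}^{(k)}(x,y)}\le C\sum_{H:H^{(k)}=K}\mu(H)^{-1}1_H(x)1_H(y)$ and integrate term by term: each $H$ contributes at most $C\mu(H)^{-1}\Norm{1_Hf}{1}\Norm{1_Hg}{1}\le C\Norm{1_Hf}{2}\Norm{1_Hg}{2}$ by H\"older. Since the cubes $\{H:H^{(k)}=K\}$ are pairwise disjoint and all contained in $K$, a Cauchy--Schwarz over $H$ converts $\sum_H\Norm{1_Hf}{2}\Norm{1_Hg}{2}$ into $\big(\sum_H\Norm{1_Hf}{2}^2\big)^{1/2}\big(\sum_H\Norm{1_Hg}{2}^2\big)^{1/2}=\Norm{1_Kf}{2}\Norm{1_Kg}{2}$. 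Adding the two contributions and bounding $\Norm{1_Kf}{2}\le\Norm{f}{2}$, $\Norm{1_Kg}{2}\le\Norm{g}{2}$ completes the proof.

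I do not expect any genuine obstacle here: all the substance is in the three kernel lemmas, and what remains is bookkeeping with H\"older's inequality, the measure growth condition, and the orthogonality of the $L^2$ masses of $f$ and $g$ over the partition of $K$ into its $k$-th generation descendants. (This per-$K$ estimate is also precisely what will later combine with the orthogonality relation $A_K^{(k)}=P_K^{(k+1)}A_K^{(k)}D_K^{(k+r)}$ to yield the bound $\Norm{Q_k}{L^2(\mu)\to L^2(\mu)}\le C\sqrt{\abs{k}}$ for the full operator $Q_k=\sum_K A_K^{(k)}$.)
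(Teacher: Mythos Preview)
Your proposal is correct and follows essentially the same route as the paper: split the kernel into the two pieces $a_{K,1}^{(k)}$ and $a_{K,2}^{(k)}$, handle the first via Lemma~\ref{lem:BkNorm} (H\"older plus the growth bound $\mu(K)\le C\ell(K)^n$), and handle the second by applying H\"older on each $H$ and then Cauchy--Schwarz over the disjoint family $\{H:H^{(k)}=K\}$ to recombine the pieces into $\Norm{1_Kf}{2}\Norm{1_Kg}{2}$. The paper's proof is word-for-word the same argument.
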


\begin{proof}
Based on the kernel bounds for the two part of $A_K^{(k)}$, it follows that
\begin{equation*}
  \abs{\pair{A_K^{(k)}f}{g}}
  \leq\frac{C}{\ell(K)^n}\Norm{1_K f}{1}\Norm{1_K g}{1}
  +C\sum_{H:H^{(k)}=K}\frac{\Norm{1_H f}{1}\Norm{1_H g}{1}}{\mu(H)}
\end{equation*}
The first term has the desired bound by the easy argument that we already gave in Lemma~\ref{lem:BkNorm}. Concerning the second term, two applications of Cauchy--Schwarz give
\begin{equation*}
\begin{split}
  \sum_{H:H^{(k)}=K} &\frac{\Norm{1_H f}{1}\Norm{1_H g}{1}}{\mu(H)}
  \leq\sum_{H:H^{(k)}=K}\Norm{1_H f}{2}\Norm{1_H g}{2} \\
  &\leq\Big(\sum_{H:H^{(k)}=K}\Norm{1_H f}{2}^2\Big)^{1/2}
    \Big(\sum_{H:H^{(k)}=K}\Norm{1_H g}{2}^2\Big)^{1/2} 
   =\Norm{1_K f}{2}\Norm{1_K g}{2},
\end{split}
\end{equation*}
and this completes the proof.
\end{proof}

\begin{lemma}
\begin{equation*}
    \abs{\pair{Q_k f}{g}}
    \leq C\sqrt{k}\Norm{f}{2}\Norm{g}{2}.
\end{equation*}
\end{lemma}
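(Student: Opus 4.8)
The plan is to bound $Q_k=\sum_K A_K^{(k)}$ by a Cauchy--Schwarz argument over the ancestor cubes $K$, feeding in the orthogonality relation $A_K^{(k)}=P_K^{(k+1)}A_K^{(k)}D_K^{(k+r)}$ established above together with a localised version of the preceding $L^2$ bound; the factor $\sqrt{k}$ will come out of the built-in asymmetry of that orthogonality relation.

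First I would note that the kernel estimates for $a_{K,1}^{(k)}$ and $a_{K,2}^{(k)}$ proved above in fact yield the sharper \emph{localised} bound
\[
  \abs{\pair{A_K^{(k)}f}{g}}\le C\Norm{1_K f}{2}\Norm{1_K g}{2},
\]
since this is exactly what the argument of the previous lemma produces before the cutoffs $1_K$ are discarded. Since $P_K^{(k+1)}=\sum_{j=0}^{k}D_K^{(j)}$ is a sum of self-adjoint martingale blocks, it is itself self-adjoint, so the orthogonality relation lets me rewrite
\[
  \pair{A_K^{(k)}f}{g}=\pair{A_K^{(k)}\big(D_K^{(k+r)}f\big)}{P_K^{(k+1)}g}.
\]
Both $D_K^{(k+r)}f$ and $P_K^{(k+1)}g$ are supported in $K$, so the localised estimate applies with the cutoffs removed, giving $\abs{\pair{A_K^{(k)}f}{g}}\le C\Norm{D_K^{(k+r)}f}{2}\Norm{P_K^{(k+1)}g}{2}$.

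Summing over $K$ and applying Cauchy--Schwarz in $K$, I would then bound
\[
  \abs{\pair{Q_k f}{g}}\le C\Big(\sum_K\Norm{D_K^{(k+r)}f}{2}^2\Big)^{1/2}\Big(\sum_K\Norm{P_K^{(k+1)}g}{2}^2\Big)^{1/2}.
\]
The first factor is at most $\Norm{f}{2}$ by the standard orthogonality estimate recalled in Section~\ref{s2}. For the second factor, for each fixed $K$ the blocks $D_K^{(j)}g$ with $j=0,\dots,k$ live at distinct scales and are therefore mutually orthogonal, whence $\Norm{P_K^{(k+1)}g}{2}^2=\sum_{j=0}^{k}\Norm{D_K^{(j)}g}{2}^2$; summing over $K$ and invoking the same orthogonality bound once for each of the $k+1$ scales gives $\sum_K\Norm{P_K^{(k+1)}g}{2}^2\le(k+1)\Norm{g}{2}^2$. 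Combining, $\abs{\pair{Q_k f}{g}}\le C\sqrt{k+1}\,\Norm{f}{2}\Norm{g}{2}\le C\sqrt{k}\,\Norm{f}{2}\Norm{g}{2}$ for $k\ge 2$, which is the assertion.

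There is no genuine obstacle in this argument; the only point worth emphasising is the source of the $\sqrt{k}$ rather than $k$. It is essential that $A_K^{(k)}$ carries only the single-scale projection $D_K^{(k+r)}$ on the $f$-side, which contributes $\Norm{f}{2}$ with no loss, while the $(k+1)$-fold projection $P_K^{(k+1)}$ sits entirely on the $g$-side and costs $\sqrt{k+1}$; a symmetric placement of projections would give only the weaker bound $Ck\,\Norm{f}{2}\Norm{g}{2}$. This is the same mechanism that produced the harmless fixed constant $\sqrt{r+1}$ (rather than $r+1$) in the treatment of $R_1$ and $R_k$ above.
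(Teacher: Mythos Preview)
Your proof is correct and follows essentially the same approach as the paper: use the orthogonality relation to insert $D_K^{(k+r)}$ on $f$ and $P_K^{(k+1)}$ on $g$, apply the (localised) operator bound from the preceding lemma, then Cauchy--Schwarz over $K$ and harvest $\sqrt{k+1}$ from the $P_K^{(k+1)}$ side. Your additional commentary on the asymmetry behind the $\sqrt{k}$ is accurate and a nice clarification.
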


\begin{proof}
\begin{equation*}
\begin{split}
    \abs{\pair{Q_k f}{g}}
    &\leq\sum_K\abs{\pair{A_K D_K^{(k+r)}f}{P_K^{(k+1)}g}} \\
    &\leq C\Big(\sum_K \Norm{D_K^{(k+r)}f}{2}^2\Big)^{1/2}\Big(\sum_K\Norm{P_K^{(k+1)}g}{2}^2\Big)^{1/2} \\
    &\leq C\Norm{f}{2}\Big(\sum_{j=0}^k\sum_K\Norm{D_K^{(j)}g}{2}^2\Big)^{1/2} \\
    &\leq C\Norm{f}{2}\sqrt{k+1}\Norm{g}{2}
      \leq C\sqrt{k}\Norm{f}{2}\Norm{g}{2},
\end{split}
\end{equation*}
using $k\geq 2$ in the last step.
\end{proof}

We have now completed the proof of Theorem~\ref{thm:newRepr}, except for the last claim concerning the representation of the new operators $Q_k$ and $R_k$ as dyadic shits. Note that this is already enough for the deduction of Corollary~\ref{cor:T1}.

\section{The shift structure of the new operators}\label{sec:shifts}

A dyadic shift of type $(u,v)$ can be defined as an operator of the form
\begin{equation*}
  S_{u,v}
  =\sum_{K\in\mathscr{D}}C_K^{(u,v)},
\end{equation*}
where the operators $C_K^{(u,v)}$ satisfy the orthogonality property
\begin{equation*}
  C_K^{(u,v)}=D_K^{(u)}C_K^{(u,v)}D_K^{(v)}
\end{equation*}
and their kernels $c_K^{(u,v)}$ have the pointwise bound
\begin{equation*}
  \abs{c_K^{(u,v)}(x,y)}\leq C\frac{1_K(x)1_K(y)}{\mu(K)}.
\end{equation*}
Since
\begin{equation*}
  D_K^{(u)}f=\sum_{I:I^{(u)}=K}\sum_i \varphi_I^i\pair{\varphi_I^i}{f}
\end{equation*}
and $\Norm{\varphi_I^i}{1}\leq C\mu(I)^{1/2}$, this is easily seen to coincide with the definition in terms of Haar functions that is used in several papers.

\subsection{The operators $R_k$}

We concentrate on $k\geq 1$, since the case of negative $k$ is similar and essentially dual to this one. (Note that the adjoint of a dyadic shift of type $(u,v)$ is a shift of type $(v,u)$.) Then
\begin{equation*}
  R_k=\sum_{K\in\mathscr{D}}B_K^{(k)},\qquad
  B_K^{(k)}=\sum_{j=k}^{k+r}D_K^{(j)}B_K^{(k)}D_K^{(k+r)},
\end{equation*}
and the kernel $b_K^{(k)}$ of $B_K^{(k)}$ satisfies
\begin{equation*}
  \abs{b_K^{(k)}(x,y)}\leq C\frac{1_K(x)1_K(y)}{\ell(K)^n}
  \leq C\frac{1_K(x)1_K(y)}{\mu(K)}.
\end{equation*}
Since $D_K^{(j)}$ is a difference of averaging operators (or by the pointwise bounds for the Haar functions), it follows that the kernel of $D_K^{(j)}B_K^{(k)}$ satisfies the same bound. Thus, letting
\begin{equation*}
  C_K^{(j,k+r)}:=D_K^{(j)}B_K^{(k)},\qquad S_{j,k+r}:=\sum_K C_K^{(j,k+r)},
\end{equation*}
it is immediate that $S_{j,k+r}$ is a shift of type $(j,k+r)$ and
\begin{equation*}
  R_k=\sum_{j=k}^{k+r}S_{j,k+r}
\end{equation*}
is a sum of $r+1$ shifts of complexity at most $k+r$.

\subsection{The operators $Q_k$}

For $k\geq 1$ again, recall that
\begin{equation*}
   Q_k=\sum_{K\in\mathscr{D}}A_K^{(k)},\qquad A_K^{(k)}=\sum_{j=0}^k D_K^{(j)}A_K^{(k)}D_K^{(k+r)},
\end{equation*}
where the kernel $a_K^{(k)}$ of $A_K^{(k)}$ satisfies
\begin{equation*}
  \abs{a_K^{(k)}(x,y)}
  \leq C\frac{1_K(x)1_K(y)}{\ell(K)^n}
    +C\sum_{H:H^{(k)}=K}\frac{1_H(x)1_H(y)}{\mu(H)}.
\end{equation*}
Then we can split
\begin{equation*}
  A_K^{(k)}
  =A_K^{(k.1)}+\sum_{H:H^{(k)}=K}A_H^{(k,2)},
\end{equation*}
where the corresponding kernels $a_K^{(k,1)}$ and $a_H^{(k,2)}$ satisfy
\begin{equation*}
  \abs{a_K^{(k,1)}(x,y)}
  \leq C\frac{1_K(x)1_K(y)}{\mu(K)},\qquad
   \abs{a_H^{(k,2)}(x,y)}\leq C\frac{1_H(x)1_H(y)}{\mu(H)}.
\end{equation*}
Thus
\begin{equation}\label{eq:decompQk}
  Q_k
  =\sum_{j=0}^{k}\sum_K D_K^{(j)}\Big(A_{K,1}^{(k)}+\sum_{H:H^{(k)}=K}A_H^{(k,2)}\Big)D_K^{(k+r)},
\end{equation}
and it is immediate that each
\begin{equation*}
   \sum_K D_K^{(j)}A_{K,1}^{(k)}D_K^{(k+r)}
\end{equation*}
is a dyadic shift of order $(j,k+r)$. We will prove that

\begin{lemma}\label{lem:difficultShift}
\begin{equation*}
  \sum_K D_K^{(j)}\sum_{H:H^{(k)}=K}A_H^{(k,2)}D_K^{(k+r)}
\end{equation*}
is a dyadic shift of order $(0,k-j+r)$.
\end{lemma}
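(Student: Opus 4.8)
The plan is to re-index the sum by the cube $L:=H^{(k-j)}$, so that $L^{(j)}=H^{(k)}=K$ and the correspondence $(K,H)\leftrightarrow(L,H)$ — under which the constraint $H^{(k)}=K$ becomes $H^{(k-j)}=L$ — is a bijection, and then to exhibit the operator as $\sum_L C_L$ with each $C_L$ of the shape required of a building block of a shift of type $(0,k-j+r)$. If $j=k$ there is nothing to prove: then $L=H$, and since the range of $A_H^{(k,2)}$ consists of multiples of $1_H$, we have $D_HA_H^{(k,2)}=0$ and the whole operator vanishes. So assume $j<k$, in which case every $H$ with $H^{(k-j)}=L$ is contained in a single child of $L$.

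I would first record two elementary ``compression'' identities. Since $A_H^{(k,2)}g$ is supported in $H\subseteq L$, and among the cubes $I$ with $I^{(j)}=K$ only $I=L$ meets $L$ (they are all at scale $\ell(L)$), one has $D_K^{(j)}A_H^{(k,2)}=D_LA_H^{(k,2)}$. Likewise, as the kernel of $A_H^{(k,2)}$ is supported in $H\times H$, the operator only feels the restriction of its argument to $H$; but the dyadic cubes of side-length $2^{-(k+r)}\ell(K)=2^{-(k-j+r)}\ell(L)$ meeting $H$ are exactly the depth-$r$ descendants of $H$, whether described via $K$ or via $L$, so $1_HD_K^{(k+r)}=1_HD_L^{(k-j+r)}$ and hence $A_H^{(k,2)}D_K^{(k+r)}=A_H^{(k,2)}D_L^{(k-j+r)}$. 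Consequently the operator in the lemma equals
\begin{equation*}
  \sum_L C_L,\qquad C_L:=D_LM_LD_L^{(k-j+r)},\qquad M_L:=\sum_{H:\,H^{(k-j)}=L}A_H^{(k,2)},
\end{equation*}
and the orthogonality $C_L=D_L^{(0)}C_LD_L^{(k-j+r)}$ is then immediate from $D_L^2=D_L$ and $(D_L^{(k-j+r)})^2=D_L^{(k-j+r)}$.

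The real content is the pointwise kernel bound $\abs{c_L(x,y)}\le C\mu(L)^{-1}1_L(x)1_L(y)$, and the subtlety is that $M_L$ by itself does \emph{not} obey it: its kernel $m_L$ is only controlled by $C\sum_{H}\mu(H)^{-1}1_H(x)1_H(y)$, which is far larger than $C\mu(L)^{-1}$ at the scale $\ell(H)$. The gain is produced by sandwiching $M_L$ between martingale differences, together with the doubling of $\mu$, and I would carry it out in two steps. Fix $y$; then $x\mapsto m_L(x,y)$ is supported in the single cube $H_y\ni y$ with $H_y^{(k-j)}=L$, and integrating the kernel bound for $a_{H_y}^{(k,2)}$ gives $\int\abs{m_L(x,y)}\,d\mu(x)\le C1_{H_y}(y)$. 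Since $H_y$ lies in one child $L'$ of $L$, applying $D_L$ in the $x$-variable yields
\begin{equation*}
  \abs{(D_LM_L)(x,y)}\le\Big(\frac{1}{\mu(L')}+\frac{1}{\mu(L)}\Big)\int\abs{m_L(x',y)}\,d\mu(x')\le\frac{C}{\mu(L)}1_L(x)1_L(y),
\end{equation*}
where the step $\mu(L')^{-1}\le C\mu(L)^{-1}$ uses that $\mu$ is doubling. For the second step, fix $x$; then $y\mapsto(D_LM_L)(x,y)$ is supported in $L$ with sup-norm at most $C\mu(L)^{-1}1_L(x)$, and applying $D_L^{(k-j+r)}$ in the $y$-variable — which, being self-adjoint and a sum over pairwise disjoint cubes $I\subseteq L$ of the operators $D_I$, maps functions supported in $L$ to themselves while at most doubling the sup-norm — produces the claimed bound for $c_L$. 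Hence $\sum_LC_L$ is a dyadic shift of type $(0,k-j+r)$.

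The one genuine obstacle is this kernel estimate, or rather the observation underlying it: the bad factor $\mu(H)^{-1}$ in the kernel of $M_L$ is harmless because $D_L$ only sees the $L^1$-bounded integral of $m_L(\cdot,y)$ over a single child of $L$, and $\mu$ of that child is comparable to $\mu(L)$ by doubling; the re-indexing and the orthogonality bookkeeping are routine.
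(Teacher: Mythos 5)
Your proof is correct and follows essentially the same route as the paper: re-index by $L:=H^{(k-j)}$ (the paper's $J$), compress $D_K^{(j)}$ and $D_K^{(k+r)}$ down to $D_L$ and $D_L^{(k-j+r)}$ using the support of $A_H^{(k,2)}$, and then verify the kernel bound for $D_L\sum_{H:H^{(k-j)}=L}A_H^{(k,2)}$. The only real difference is in how the last kernel estimate is carried out: the paper expands $D_L$ in Haar functions and invokes the doubling-case pointwise bound $\abs{\varphi_L^i}\le C\mu(L)^{-1/2}$ to compute the kernel directly, whereas you integrate the kernel in the $x$-variable (getting an $L^1$ bound $\le C$ supported in a single child of $L$) and then observe that applying $D_L$ to a function supported in a child $L'$ gives sup-norm $\lesssim(\mu(L')^{-1}+\mu(L)^{-1})\,\Norm{\cdot}{1}\lesssim\mu(L)^{-1}\Norm{\cdot}{1}$ by doubling. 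Both arguments use the doubling hypothesis in an essential way and land at the same estimate, so this is a presentational variant rather than a genuinely different proof; your explicit handling of the boundary case $j=k$ and the careful justification of the two compression identities are nice additions, but the substance matches the paper.
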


Once this is proven, it follows from \eqref{eq:decompQk} that $Q_k$ is a sum of $2(k+1)$ shift of complexity at most $k+r$.

\begin{proof}[Proof of Lemma~\ref{lem:difficultShift}]
As written, the series looks formally more like a shift of order $(j,k+r)$, but the the kernel of $\sum_{H:H^{(k)}=K}A_H^{(k,2)}$ does not have the correct bound. Thus we need to reorganise the summation:
\begin{equation*}
\begin{split}
  \sum_K & D_K^{(j)}\sum_{H:H^{(k)}=K}A_H^{(k,2)} D_K^{(k+r)} \\
  &=\sum_K\Big(\sum_{J:J^{(j)}=K}D_J\Big)\Big(\sum_{L:L^{(j)}=K}\sum_{H:H^{(k-j)}=L}A_H^{(k.2)}\Big)\Big(\sum_{M:M^{(j)}=K}D_M^{(k-j+r)}\Big) \\
  &=\sum_J D_J\sum_{H:H^{(k-j)}=J}A_H^{(k,2)}D_J^{(k-j+r)},
\end{split}
\end{equation*}
using support considerations to see that only the terms with $M=L=J$ are non-zero, and simplifying $\sum_K\sum_{J:J^{(k)}=K}=\sum_J$.

Now, the right hand side has the structure of a shift of order $(0,k-j+r)$, and we only need to verify the bound for the kernel. The kernel of $D_J\sum_H A_H^{(k,2)}$ is given by
\begin{equation*}
\begin{split}
 &\Babs{ \sum_i\varphi_J^i(x)\int\varphi_J^i(z)\sum_{H:H^{(k-j)}=J}a_H^{(k,2)}(z,y)d\mu(z)} \\
 &\leq C\sum_i \frac{1_J(x)}{\mu(J)^{1/2}}
    \sum_{H:H^{(k-j)}=J}\int \frac{1_J(z)}{\mu(J)^{1/2}}\frac{1_H(z)1_H(y)}{\mu(H)}d\mu(z) \\
  &\leq C\frac{1_J(x)}{\mu(J)}\sum_{H:H^{(k-j)}=J}1_H(y)
    =C\frac{1_J(x)1_J(y)}{\mu(J)},
\end{split}
\end{equation*}
which is the correct bound. Since $D_J^{(k-j+r)}$ is a difference of averaging operators (or from the bounds for Haar functions), we find that the kernel of $D_J\sum_H A_H^{(k,2)}D_J^{(k-j+r)}$ has the same bound. This completes the proof.
\end{proof}

Now we have completed the proof of all claims in Theorem~\ref{thm:newRepr}, and therefore also the proof of Corollary~\ref{cor:A2}.

\section{The weak-type bounds}\label{sec:weak}

In this final section, we complete the proof of Theorem~\ref{thm:details} by proving the asserted weak-type estimates in the case that $\mu$ is a doubling measure.

\begin{proposition}
Let $\mu$ be a doubling measure on $\R^d$, and let $U_k$ be an operator of the form
\begin{equation*}
  U_k=\sum_{K\in\mathscr{D}}V_K^{(k)},\qquad
  V_K^{(k)}=1_K V_K^{(k)}P_K^{(k+r+1)}.
\end{equation*}
Suppose further that
\begin{equation*}
  \Norm{V_K^{(k)}}{L^1(\mu)\to L^1(\mu)}\leq C,\qquad\Norm{U_k}{L^2(\mu)\to L^2(\mu)}\leq C\sqrt{k}.
\end{equation*}
Then
\begin{equation*}
  \Norm{U_k}{L^1(\mu)\to L^{1,\infty}(\mu)}\leq Ck.
\end{equation*}
\end{proposition}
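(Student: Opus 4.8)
The plan is to run the usual Calder\'on--Zygmund argument: the good part is handled by the hypothesis $\Norm{U_k}{L^2(\mu)\to L^2(\mu)}\le C\sqrt k$ and Chebyshev, while the bad part is controlled using the structural identity $V_K^{(k)}=1_KV_K^{(k)}P_K^{(k+r+1)}$ together with the uniform bound $\Norm{V_K^{(k)}}{L^1(\mu)\to L^1(\mu)}\le C$. Fix $f\in L^1(\mu)$ and $\lambda>0$, let $\{Q_i\}\subset\mathscr{D}$ be the maximal dyadic cubes with $\ave{\abs{f}}_{Q_i}>\lambda$, and set $\Omega:=\bigcup_iQ_i$. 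Then $\mu(\Omega)\le\lambda^{-1}\Norm{f}{1}$, and comparing with the non-selected parents and using that $\mu$ is doubling, $\ave{\abs{f}}_{Q_i}\le C\lambda$. I would then write $f=g+b$ in the standard way, with $g:=1_{\Omega^c}f+\sum_i\ave{f}_{Q_i}1_{Q_i}$ and $b:=\sum_ib_i$, $b_i:=1_{Q_i}\bigl(f-\ave{f}_{Q_i}\bigr)$, so that $\Norm{g}{\infty}\le C\lambda$, $\Norm{g}{1}\le\Norm{f}{1}$, each $b_i$ is supported on $Q_i$ with vanishing $\mu$-integral, and $\sum_i\Norm{b_i}{1}\le 2\Norm{f}{1}$.

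For the good part one estimates $\Norm{g}{2}^2\le\Norm{g}{\infty}\Norm{g}{1}\le C\lambda\Norm{f}{1}$ and applies the $L^2$ hypothesis:
\[
  \mu\{\abs{U_kg}>\tfrac\lambda2\}\le\frac{4}{\lambda^2}\Norm{U_kg}{2}^2\le\frac{Ck}{\lambda}\Norm{f}{1}.
\]
The crux is the bad part. Here I would first record that $D_Ib_i=0$ whenever $I$ is not contained in $Q_i$: this is clear if $I\cap Q_i=\emptyset$, and if $I\supsetneq Q_i$ then $b_i$ is supported in a single child of $I$ and has mean zero, so every average entering $D_Ib_i$ vanishes. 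Since $P_K^{(k+r+1)}b_i=\sum_{j=0}^{k+r}\sum_{I:I^{(j)}=K}D_Ib_i$, this forces $P_K^{(k+r+1)}b_i=0$ unless $K=I^{(j)}$ for some $I\subseteq Q_i$ and some $0\le j\le k+r$, which by the nestedness of $\mathscr{D}$ forces either $K\subseteq Q_i$ or $K=Q_i^{(j)}$ with $1\le j\le k+r$. As $V_K^{(k)}b_i$ is supported on $K$, only the latter cubes contribute outside $\Omega$, so
\[
  1_{\Omega^c}U_kb=1_{\Omega^c}\sum_i\sum_{j=1}^{k+r}V_{Q_i^{(j)}}^{(k)}b_i,
\]
and estimating each of these $k+r$ terms by the uniform $L^1(\mu)\to L^1(\mu)$ bound gives
\[
  \int_{\Omega^c}\abs{U_kb}\,d\mu\le\sum_i\sum_{j=1}^{k+r}\Norm{V_{Q_i^{(j)}}^{(k)}b_i}{1}\le C(k+r)\sum_i\Norm{b_i}{1}\le Ck\Norm{f}{1}.
\]
Hence $\mu\{x\in\Omega^c:\abs{U_kb(x)}>\lambda/2\}\le Ck\lambda^{-1}\Norm{f}{1}$, and combining this with $\mu(\Omega)\le\lambda^{-1}\Norm{f}{1}$ and the good-part bound yields $\mu\{\abs{U_kf}>\lambda\}\le Ck\lambda^{-1}\Norm{f}{1}$, as desired (all constants depending only on $d$, $r$, and the doubling constant of $\mu$).

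The step I expect to require the most care is precisely this treatment of the bad part. The naive move would be to put $\bigcup_iQ_i^{(k+r)}$ into the exceptional set and argue that $U_kb$ is supported there; but then $\mu\bigl(Q_i^{(k+r)}\bigr)\le C^{k+r}\mu(Q_i)$ would produce an \emph{exponential} dependence on $k$, far worse than the claimed linear one. The point of the argument above is not to enlarge the exceptional set at all, but to observe that only $O(k)$ pieces of $U_kb_i$ — the terms $V_{Q_i^{(j)}}^{(k)}b_i$ with $Q_i^{(j)}\supsetneq Q_i$ — ever leave $Q_i$, and to estimate these directly in $L^1(\Omega^c)$ using the uniform operator bounds rather than any support/covering consideration.
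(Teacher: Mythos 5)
Your proof is correct and follows essentially the same route as the paper: a Calder\'on--Zygmund decomposition, Chebyshev plus the $L^2$ hypothesis for the good part, and for the bad part the observation that $V_K^{(k)}b_i=V_K^{(k)}P_K^{(k+r+1)}b_i$ together with $\int b_i\,d\mu=0$ forces $K$ to be one of the $O(k)$ ancestors $Q_i^{(1)},\dots,Q_i^{(k+r)}$ when $K\supsetneq Q_i$, after which the uniform $L^1$ bound on $V_K^{(k)}$ finishes the job. The only (cosmetic) difference is that you locate the cancellation through $D_Ib_i=0$ for $I\not\subseteq Q_i$ and the martingale-difference form $P_K^{(k+r+1)}=\sum_{j=0}^{k+r}D_K^{(j)}$, whereas the paper uses the equivalent telescoping form $P_K^{(k+r+1)}b_J=\sum_{I:I^{(k+r+1)}=K}1_I\ave{b_J}_I-1_K\ave{b_J}_K$ and the vanishing of $\ave{b_J}_I$ for $\ell(I)\geq\ell(J)$.
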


Observe that each of the operators $Q_k,R_k,Q_k^*,R_k^*$ is of the form $U_{\abs{k}}$ considered in the proposition.

\begin{proof}
Let $f\in L^1(\mu)$ and $\lambda>0$. We make the usual Calder\'on--Zygmund decomposition: Consider the maximal cubes $J\in\mathscr{D}$ such that $\ave{\abs{f}}_J>\lambda$; then by the doubling property $\ave{\abs{f}}_J\leq C\lambda$. Thus the good part
\begin{equation*}
  g:=\sum_J\ave{f}_J 1_J+f 1_{\Omega^c},\qquad\Omega:=\bigcup J
\end{equation*}
satisfies $\Norm{g}{\infty}\leq C\lambda$, $\Norm{g}{1}\leq\Norm{f}{1}$, and hence $\Norm{g}{2}^2\leq C\lambda\Norm{f}{1}$. It follows that
\begin{equation*}
  \mu(\{\abs{U_k g}>\lambda\})
  \leq \lambda^{-2}\Norm{U_k g}{2}^2
  \leq C\lambda^{-2} (\sqrt{k}\Norm{g}{2})^2
  \leq Ck\lambda^{-1}\Norm{f}{1}.
\end{equation*}
Also, the set $\Omega:=\bigcup J$ satisfies $\mu(\Omega)\leq \lambda^{-1}\Norm{f}{1}$. So it remains to estimate
\begin{equation*}
  \mu(\{\abs{U_k b}>\lambda\}\setminus\Omega),\qquad
  b:=\sum_J b_J:=\sum_J (f-\ave{f}_J)1_J.
\end{equation*}
We have
\begin{equation*}
  \mu(\{\abs{U_k b}>\lambda\}\setminus\Omega)
  \leq\frac{1}{\lambda}\int_{\Omega^c}\abs{U_k b}d\mu
  \leq\frac{1}{\lambda}\sum_J\int_{J^c}\abs{U_k b_J}d\mu,
\end{equation*}
and
\begin{equation*}
  1_{J^c}U_k b_J
  =1_{J^c}\sum_K V_K^{(k)}b_J
  =1_{J^c}\sum_{K\supsetneq J}V_K^{(k)}b_J,
\end{equation*}
since $K$ must intersect both $J$ and $J^c$ to get a non-zero contribution. Moreover, noting that
\begin{equation*}
  V_K^{(k)}b_J
  =V_K^{(k)}P_K^{(k+r+1)}b_J
  =V_K^{(k)}\Big(\sum_{I:I^{(k+r+1)}=K}1_I\ave{b_J}_I-1_K\ave{b_J}_K\Big),
\end{equation*}
and recalling that $\int_J b_Jd\mu=0$, we find that $\ave{b_J}_I=0$ for all dyadic cubes $I$ with $\ell(I)\geq\ell(J)$. So the only non-zero contribution can arise if $\ell(I)=2^{-k-r-1}\ell(K)<\ell(J)$. In combination with $J\subsetneq K$, this implies that $J\subsetneq K\subseteq J^{(k+r)}$. So altogether
\begin{equation*}
\begin{split}
  \int_{J^c}\abs{U_k b_J}d\mu
  &\leq\sum_{K:J\subsetneq K\subseteq J^{(k+r)}}\Norm{V_K^{(k)}b_J}{1} \\
  &\leq\sum_{K:J\subsetneq K\subseteq J^{(k+r)}} C\Norm{b_J}{1}
  =(k+r)C\Norm{b_J}{1}\leq Ck\Norm{b_J}{1},
\end{split}
\end{equation*}
since $k\geq 1$ and $r$ is fixed. Summing over $J$ we conclude that
\begin{equation*}
  \mu(\{\abs{U_k b}>\lambda\}\setminus\Omega)
  \leq\frac{1}{\lambda}\sum_J\int_{J^c}\abs{U_k b_J}d\mu
  \leq\frac{1}{\lambda}\sum_J Ck\Norm{b_J}{1}
  \leq\frac{Ck}{\lambda}\Norm{f}{1},
\end{equation*}
and this completes the proof.
\end{proof}


\end{document}